\theoremstyle{plain}
\newtheorem*{theorem*}{Theorem}
\newtheorem{theorem}{Theorem}[section]
\crefname{theorem}{Theorem}{Theorems}
\Crefname{theorem}{Theorem}{Theorems}
\newtheorem*{lemma*}{Lemma}
\newtheorem{lemma}[theorem]{Lemma}
\crefname{lemma}{Lemma}{Lemmas}
\Crefname{lemma}{Lemma}{Lemmas}
\newtheorem*{claim*}{Claim}
\newtheorem{claim}[theorem]{Claim}
\crefname{claim}{Claim}{Claims}
\Crefname{claim}{Claim}{Claims}
\newtheorem*{innerclaim*}{Claim}
\crefname{claim}{Claim}{Claims}
\Crefname{claim}{Claim}{Claims}
\newtheorem{proposition}[theorem]{Proposition}
\crefname{proposition}{Proposition}{Propositions}
\Crefname{proposition}{Proposition}{Propositions}
\crefname{corollary}{Corollary}{Corollaries}
\Crefname{corollary}{Corollary}{Corollaries}
\crefname{conjecture}{Conjecture}{Conjectures}
\Crefname{conjecture}{Conjecture}{Conjectures}
\newtheorem{question}[theorem]{Question}
\crefname{question}{Question}{Questions}
\Crefname{question}{Question}{Questions}
\crefname{observation}{Observation}{Observations}
\Crefname{observation}{Observation}{Observations}
\crefname{example}{Example}{Examples}
\Crefname{example}{Example}{Examples}
\theoremstyle{definition}
\crefname{problem}{Problem}{Problems}
\Crefname{problem}{Problem}{Problems}
\crefname{definition}{Definition}{Definitions}
\Crefname{definition}{Definition}{Definitions}
\theoremstyle{remark}
\crefname{remark}{Remark}{Remarks}
\Crefname{remark}{Remark}{Remarks}
\xpatchcmd{\proof}{\itshape}{\normalfont\proofnamefont}{}{}
\newcommand{\proofnamefont}{}
\renewcommand{\proofnamefont}{\bfseries}
\newcommand{\remove}[1]{}
\newcommand{\ceil}[1]{
	\left\lceil #1 \right\rceil
}
\newcommand{\floor}[1]{
	\left\lfloor #1 \right\rfloor
}
\newcommand{\cP}{\mathcal{P}}
\newcommand{\cF}{\mathcal{F}}
\newcommand{\cC}{\mathcal{C}}
\newcommand{\eps}{\varepsilon}
\renewcommand{\setminus}{-}
\newcommand{\Ex}{\mathbb{E}}
\def \Xm {X^-}
\def \Zm {Z^-}
\def \Ym {Y^-}
\def \um {u^-}
\def \wm {w^-}
\def \wp {w^+}
\def \vm {v^-}
\def \xm {x^-}
\def \ym {y^-}
\def \zm {z^-}
\def \int {^\circ}
\def \mthree{^{-3}}
\def \nto {\leftarrow}
\begin{document}

\title{Pancyclicity of highly connected graphs}
\author{Shoham Letzter\thanks{
		Department of Mathematics, 
		University College London, 
		London WC1E~6BT, UK. 
		Email: \texttt{s.letzter}@\texttt{ucl.ac.uk}. 
		Research supported by the Royal Society.
	}
}

\date{}
\maketitle

\begin{abstract}

	\setlength{\parskip}{\medskipamount}
    \setlength{\parindent}{0pt}
    \noindent

	A well-known result due to Chvat\'al and Erd\H{o}s (1972) asserts that, if a graph $G$ satisfies $\kappa(G) \ge \alpha(G)$, where $\kappa(G)$ is the vertex-connectivity of $G$, then $G$ has a Hamilton cycle. We prove a similar result implying that a graph $G$ is \emph{pancyclic}, namely it contains cycles of all lengths between $3$ and $|G|$: if $|G|$ is large and $\kappa(G) > \alpha(G)$, then $G$ is pancyclic. This confirms a conjecture of Jackson and Ordaz (1990) for large graphs, and improves upon a very recent result of Dragani\'c, Munh\'a-Correia, and Sudakov.

\end{abstract}

\section{Introduction} \label{sec:intro}

	A graph $G$ is \emph{Hamiltonian} if it contains a Hamilton cycle, namely a cycle through all the vertices of $G$. The notion of Hamiltonicity is a key notion in combinatorics and computer science (see the surveys \cite{kuhn2014hamilton,gould2014recent}). The decision problem, of determining whether a given graph is Hamiltonian, is one of 21 problems that have been shown to be NP-complete in Karp's influential paper \cite{karp1972reducibility} from 1972. As such, it is interesting to find simple sufficient conditions for Hamiltonicity. Perhaps the earliest and best-known example is Dirac's theorem \cite{dirac1952some} from 1952, which asserts that every $n$-vertex graph with minimum degree at least $n/2$ (and $n \ge 3$) has a Hamilton cycle.

	We say that a graph $G$ is \emph{pancyclic} if it contains a cycle of length $\ell$, for every $\ell \in [3,|G|]$.
	In 1971 Bondy \cite{bondy1971pancyclic} showed that Dirac's theorem can be strengthened substantially, to show that every graph on $n$ vertices with minimum degree at least $n/2$ is either pancyclic, or a balanced complete bipartite graph. While interesting in its own right, Bondy believed that this example was a special case of a much more general phenomenon: in 1973 he \cite{bondy1973pancyclic} posed a famous `meta-conjecture', asserting that almost every non-trivial property implying Hamiltonicity also implies pancyclicity, up to a small number of exceptions.

	\def \ta {\tilde{\alpha}}
	Since then, the validity of this meta-conjecture has been proved in many cases. For example, Bauer and Schmeichel \cite{bauer1990hamiltonian} considered three properties involving degrees of vertices that were shown to imply Hamiltonicity by Bondy \cite{bondy1980longest}, Chvat\'al \cite{chvatal1972hamilton}, and Fan \cite{fan1984new}, and showed that they also imply pancyclicity, with the exception of some bipartite graphs. 
	Clark \cite{clark1981hamiltonian} (and later, independently, Shi \cite{shi1986connected} and Zhang \cite{zhang1989cycles}) showed that connected, locally connected claw-free graphs on at least three vertices are pancyclic, generalising a result of Oberly and Sumner \cite{oberly1979every}, who proved that such graphs are Hamiltonian. Another very recent illustration of the validity of Bondy's meta-conjecture is a result of Dragai\'c, Munh\'a-Correia, and Sudakov \cite{draganic2023generalization}. To state their result, let $\ta(G)$ be the \emph{bipartite independence number} of $G$, defined to be the maximum of $a+b$, over all $a,b$ such that there exist disjoint sets $A$ and $B$, of $a$ and $b$ vertices, such that $G$ has no $A$-$B$ edges. (In particular, $\ta(G) \ge \alpha(G)$ for every graph $G$.)
	They proved that if a graph $G$ satisfies $\delta(G) \ge \tilde{\alpha}(G)$, then $G$ is pancyclic, unless $G$ is a balanced complete bipartite graph, generalising a result of McDiarmid and Yolov \cite{mcdiarmid2017hamilton} who proved that this condition implies Hamiltonicity.

	The result of McDiarmid and Yolov is reminiscent of the following classic result of Chvat\'al and Erd\H{o}s \cite{chvatal1972note} from 1972: every graph $G$ satisfying $\kappa(G) \ge \alpha(G)$, where $\kappa(G)$ is the vertex-connectivity of $G$, is Hamiltonian. In light of the above discussion, it is natural to ask: is (almost) every graph $G$ satisfying $\kappa(G) \ge \alpha(G)$ pancyclic? It is easy to come up with a counterexample; indeed, any balanced complete bipartite graph is one. In fact, Bauer, van den Heuvel, and Schmeichel \cite{bauer1995toughness} found many examples of non-bipartite, triangle-free graphs $G$ satisfying $\kappa(G) = \alpha(G)$. Such graphs are counterexamples to the question, due to the triangle-freeness. Nevertheless, Lou \cite{lou1996chvatal} showed that every such graph $G$ contains a cycle of length $\ell$ for every $\ell \in [4, |G|]$, so the lack of triangles is the only barrier to pancyclicity for this class of examples.
	It is plausible that every graph $G$ which satisfies $\kappa(G) \ge \alpha(G)$ and contains a triangle is pancyclic. In this direction, in 1986 Jackson and Ordaz \cite{jackson1990chvatal} conjectured that every graph $G$ with $\kappa(G) > \alpha(G)$ is pancyclic. 

	A result of Erd\H{o}s \cite{erdos1974some} from 1974 implies that the stronger condition $\kappa(G) \ge 4(\alpha(G) + 1)^4$ guarantees pancyclicity. 
	Amar, Fournier, and Germa \cite{amar1991pancyclism} proved Jackson and Ordaz's conjecture when $\alpha(G) \in \{2,3\}$. Keevash and Sudakov \cite{keevash2010pancyclicity} proved the conjecture up to a constant factor: they showed that if $\kappa(G) \ge 600 \alpha(G)$ then $G$ is pancyclic. Very recently, Dragani\'c, Munh\'a-Correia, and Sudakov \cite{draganic2023chvatal} proved it asymptotically: they showed that for every $\eps > 0$, if $|G|$ is large enough and $\kappa(G) \ge (1 + \eps)\alpha(G)$, then $G$ is pancyclic.
	In this paper we prove Jackson and Ordaz's conjecture for sufficiently large graphs.

	\begin{theorem} \label{thm:main}
		There exists $n_0$ such that every graph $G$ on at least $n_0$ vertices, that satisfies $\kappa(G) > \alpha(G)$, is pancyclic.
	\end{theorem}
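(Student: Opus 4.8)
Write $\alpha = \alpha(G)$, $k = \kappa(G)$ and $n = |G|$, so that $k \ge \alpha + 1$. Since the Jackson--Ordaz conjecture is known for $\alpha \le 3$ (Amar--Fournier--Germa, as discussed above), we may assume $\alpha \ge 4$. Begin with the routine reductions. By the Chvat\'al--Erd\H{o}s theorem $G$ has a Hamilton cycle, so $C_n \subseteq G$; and since $\kappa(G - v) \ge k - 1 \ge \alpha \ge \alpha(G-v)$ for every vertex $v$, the same theorem shows $G - v$ is Hamiltonian, so $C_{n-1} \subseteq G$. Moreover $G$ contains a triangle: otherwise every neighbourhood is independent and hence $\delta(G) \le \alpha < k \le \delta(G)$, a contradiction. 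It therefore remains to find a cycle of length $\ell$ for every $\ell \in [4, n-2]$; the aim will be to exhibit cycles of all ``short'' lengths $[3,L]$ and of all ``long'' lengths $[L', n]$ with $L \ge L' - 1$.

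I would first dispose of the extreme cases of $\alpha$. If $\alpha$ is linear in $n$ --- concretely, once $\alpha \ge n/3$ --- then $\delta(G) \ge k > n/3$, the graph $G$ is $2$-connected, non-bipartite and contains a triangle, so $G$ is pancyclic by a min-degree pancyclicity theorem (e.g.\ that of Brandt--Faudree--Goddard on weakly pancyclic graphs); thus one may assume $\alpha < n/3$. If $\alpha$ is very small then $G$ is dense --- its complement is $K_{\alpha+1}$-free --- so Ramsey's theorem yields cliques, hence cycles, of all lengths up to roughly $\log n$, and one can hope to reach all longer lengths by repeatedly deleting well-chosen vertices (or cliques) and re-applying Chvat\'al--Erd\H{o}s, or by a direct counting argument; it is possible, though, that this regime in fact needs the same machinery as the general case.

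For the long lengths I would fix a Hamilton cycle $C$ and try to descend: show that whenever $G$ has a cycle of length $\ell$ in the long range, it has one of length $\ell - 1$. A chordless cycle is induced and hence --- taking alternate vertices --- has length at most $2\alpha + 1$, so in the long range every cycle has a chord; if some chord runs ``over a single vertex'' (joining the two cycle-neighbours of a vertex of the cycle), deleting that vertex descends immediately. The difficulty, and the point at which $\kappa > \alpha$ must genuinely be used rather than merely $\kappa \ge \alpha$, is a cycle all of whose chords are ``long'': here I would use the connectivity to route, between two suitably chosen vertices of (or close to) the cycle, a short path internally avoiding a large independent set, and splice it in to correct the length by the required amount --- the single unit of connectivity beyond $\alpha$ being exactly the slack needed to find such a path. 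Turning this into an argument that works uniformly over all long lengths, and dovetailing it with the treatment of the short lengths so that no length is skipped, is where I expect the real work to lie, and it will likely require a delicate case analysis of the structure of an extremal (would-be missing) cycle length.
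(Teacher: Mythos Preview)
Your proposal is a plan rather than a proof, and the plan has genuine gaps at exactly the points you yourself flag as ``where the real work lies.'' Two specific problems:

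\textbf{The descent step does not work as stated.} You want to go from a cycle of length $\ell$ to one of length $\ell-1$ by using a chord. A chord splits the cycle into two cycles of lengths $a+1$ and $b+1$ with $a+b=\ell$, so you get \emph{some} shorter length, not $\ell-1$; only a chord ``over a single vertex'' gives $\ell-1$, and there is no reason such a chord exists. Your fallback --- route a short path between two vertices of the cycle to correct the length --- is not made precise, and it is not clear why the single extra unit of connectivity over $\alpha$ buys you a path of exactly the right length. This is not a matter of filling in a routine case analysis: the asymptotic version $\kappa(G)\ge(1+\varepsilon)\alpha(G)$ was already nontrivial, and closing the $\varepsilon$-gap is the whole content of the theorem.

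\textbf{The short range is not addressed.} You mention dovetailing with ``short lengths'' but give no mechanism for producing cycles of length, say, $\Theta(n/\alpha)$ or $\Theta(\sqrt n)$. Ramsey-type cliques only give lengths up to about $\log n$, and repeatedly deleting vertices and re-applying Chvat\'al--Erd\H{o}s drops the connectivity below $\alpha$ after very few steps.

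The paper's approach is structurally different. It first invokes the Dragani\'c--Munh\'a-Correia--Sudakov theorem to reduce to $n\le 4\alpha^2$, then splits the target length $\ell$ into three overlapping ranges (upper $\ell\ge n/\delta\alpha$, middle $n/\alpha\le\ell\le\delta(n/\alpha)^2$, lower $\ell\le\max\{n/\alpha,\delta\alpha\}$), each handled by a dedicated lemma. The upper range uses a new rotation argument: rather than looking for an edge in $\{u^-:u\in U\}$ (which needs $|U|>\alpha$), one looks in $\{u^-,u^{-3}:u\in U\}$, which works once $|U|>\alpha/2$ and either yields the desired extension or forces many non-intersecting chords of length~$2$, which are then exploited separately. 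The middle and lower ranges use cycle-complete Ramsey bounds, the Bondy--Simonovits even-cycle theorem, and careful path-shortening/extending lemmas. None of this machinery appears in your outline.
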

	
	We use many ingredients that are also used in \cite{draganic2023chvatal} in the proof of the asymptotic version of the conjecture, including lemmas from \cite{draganic2022pancyclicity,draganic2023chvatal} about extending and shortening paths by a small amount, the main result in \cite{draganic2022pancyclicity}, upper bounds on cycle-complete Ramsey numbers \cite{keevash2021cycle,erdos1978cycle}, and an upper bound on the Tur\'an numbers of even cycles \cite{bondy1974cycles}. Nevertheless, in each step of the proof, new ingredients are needed. Here we highlight one such ingredient, which is a so-called rotation-extension argument, and might have applications elsewhere.

	\paragraph{A new rotation argument.}
		A very useful tool in the study of cycles in graphs is the so-called \emph{rotation-extension} technique, introduced by P\'osa \cite{posa1976hamiltonian} in the study of Hamiltonicity of random graphs, and later and independently also used by Thomason \cite{thomason1978hamiltonian}.
		A standard rotation-extension argument proceeds as follows. Consider a cycle $C$, and for a vertex $u$ in $C$, denote by $\um$ the predecessor of $u$ in $C$ (according to an arbitrary orientation of $C$). If $u,v$ are two vertices in $C$, which can be joined by a path $P$ with non-empty interior that lies outside of $C$, and $\um \vm$ is an edge, then we can form a new cycle $(uC_{u \to \vm}\vm\um C_{\um \nto v}vxPyu)$ (see \Cref{fig:cycle1}) which extends $C$.
		Suppose now that our underlying graph has independence number $\alpha$.
		Then, using this argument, if $U$ is a set of at least $\alpha+1$ vertices in $C$, any two of whose vertices can be joined by a path as above, then we can find an appropriate extension of $C$ by the existence of an edge $\um \vm$.\footnote{This approach proves the Chvat\'al--Erd\H{o}s Hamiltonicity result mentioned above.}
		This approach fails, however, even when we can find such a $U$ of size only slightly below $\alpha$, as there is no way to guarantee the existence of an edge $\um \vm$. Our new idea here is to consider the set $\{\um, u\mthree : u \in U\}$, where $u\mthree = ((\um)^-)^-$. 
		If $|U| > \alpha/2$, and no two vertices of $U$ are too close on $C$, then this is a set of size larger than $\alpha$, which thus spans an edge. If that edge is of form $\um \vm$, $\um v\mthree$, or $u\mthree v\mthree$, with distinct $u,v \in U$, and $u,v$ can be joined by a path of length at least $6$ with interior outside of $C$, then we can extend $C$ similarly to the above (see \Cref{fig:cycle2,fig:cycle4,fig:cycle5}). The only other possibility is an edge of form $\um u\mthree$, which is a chord of length $2$. Under the right conditions, this either gives us the desired extension of $C$, or yields many non-intersection chords of length $2$, which can be utilised in other ways.

	\paragraph{Organisation of the paper.}
		In the next section, \Cref{sec:main-proof}, we state three lemmas, corresponding to three ranges of cycles lengths (upper, middle, and lower), and show how to prove our main result, \Cref{thm:main}, using these lemmas (and an additional result due to Dragani\'c, Munh\'a-Correia, and Sudakov \cite{draganic2022pancyclicity}).
		In \Cref{sec:prelims} we mention notation and preliminary results that will be used in the proofs of more than one of the three lemmas. We then prove the three lemmas in \Cref{sec:upper,sec:middle,sec:lower}, devoting one section to each lemma.
		We conclude the paper in \Cref{sec:conc} with some remarks regarding potential future research.

\section{Proof of the main theorem} \label{sec:main-proof}

	Our proof of \Cref{thm:main} splits into three lemmas, according to the length of the cycles they can guarantee. Here are these three lemmas.
	The notation $a \ll b$ means that $a$ is chosen to be sufficiently small with respect to $b$.

	\begin{restatable}[Upper range]{lemma}{lemUpperRange} \label{lem:upper-range}
		Let $0 < \delta \ll 1$ and let $n, \alpha, \ell \ge 1$ be integers, satisfying: $\alpha$ is sufficiently large, $\alpha \le n \le 4\alpha^2$, and $n / \delta \alpha \le \ell \le n$.
		Suppose that $G$ is a graph on $n$ vertices with $\alpha(G) = \alpha$ and $\kappa(G) > \alpha$.
		Then $G$ has a cycle of length $\ell$.
	\end{restatable}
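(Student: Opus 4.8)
The plan is to use the main result of \cite{draganic2022pancyclicity} on cycles of nearly linear length as a black box at the top of the range, and then to descend to shorter cycles by a rotation–extension argument that shrinks a long cycle by a controlled amount at each step. More precisely, since $\kappa(G) > \alpha(G)$ the Chvat\'al–Erd\H{o}s theorem gives a Hamilton cycle, so $G$ certainly has a cycle of length $n$; and by the result of Dragani\'c, Munh\'a-Correia, and Sudakov from \cite{draganic2022pancyclicity} we may assume $G$ contains cycles of every length in some interval $[(1-o(1))n, n]$. The task is therefore to take a cycle of length $m$ with $n/\delta\alpha \le m$ and, provided $m$ is not already the target length $\ell$, produce a cycle of length in $[m - t, m-1]$ for some bounded shrinkage $t$ (ideally $t = 1$, but a bounded $t$ together with a symmetric extension step would also suffice). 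Iterating this down from $n$ reaches every $\ell$ in the stated range.

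The key mechanism is the new rotation argument sketched in the introduction. Fix a cycle $C$ of length $m \gg \alpha$, oriented, and let $W = V(G) \setminus V(C)$; since $\kappa(G) > \alpha \ge |W|$ is false in general, we instead exploit that $C$ is long: most of $G$ lies on $C$. For a vertex $u \in C$ consider its short back-neighbourhood $\{\um, u\mthree\}$ along $C$. Choose a set $U \subseteq V(C)$ with $|U| > \alpha/2$ whose elements are pairwise far apart on $C$ (possible since $m$ is large), and such that any two vertices of $U$ can be joined by a path of length $\ge 6$ with interior outside $C$ — here connectivity $\kappa(G) > \alpha$ is used to route such paths, deleting the few interior vertices used so far. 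Then $\{\um, u\mthree : u \in U\}$ has size $> \alpha$, hence spans an edge. An edge of type $\um\vm$, $\um v\mthree$, or $u\mthree v\mthree$ with $u \ne v$ yields, via the rotation-extension moves in \Cref{fig:cycle2,fig:cycle4,fig:cycle5}, a cycle of length $m + (\text{length of the outside path}) - (1,2,\text{ or }3)$; by choosing the connecting path to have a prescribed length we control the new cycle length and in particular can realize a small decrease. The remaining case is a chord $\um u\mthree$ of length $2$: a single such chord lets us bypass one vertex of $C$, directly giving a cycle of length $m-1$; and if every edge found is of this type, we collect many pairwise non-crossing length-$2$ chords, which can be used simultaneously to excise a prescribed number of vertices, again reaching the target length.

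The main obstacle is making the descent step work uniformly across the whole range $n/\delta\alpha \le \ell \le n$, where the cycle length can be as small as a $\delta$-fraction-free multiple of $n/\alpha$ but still super-linear in $\alpha$: one must guarantee simultaneously that (i) $U$ of size $> \alpha/2$ with the required separation exists on $C$, which needs $m$ to be large compared to $\alpha$ (true since $m \ge n/\delta\alpha \ge \alpha/\delta$ once $n \ge \alpha^2\cdot$something, and in any case $m$ is large because $\alpha$ is large and $\delta$ is fixed), and (ii) the outside paths of length $\ge 6$ between prescribed pairs genuinely exist — this requires that $V(G)\setminus V(C)$ together with the connectivity can absorb the routing, i.e. that $|W| = n - m$ is not too small relative to what we need, OR that we can instead route through rotations along $C$ itself. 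Balancing these two regimes (few vertices off the cycle versus many) is the delicate point; the bookkeeping that keeps the connecting paths vertex-disjoint from previously used material, and that ensures the collected length-$2$ chords are pairwise non-crossing so they can be applied in parallel, is where the real work lies. Once a decrease-by-bounded-amount step and a matching increase-by-bounded-amount step are both available, a short interval-covering argument closes the gap between consecutive attainable lengths and yields all $\ell$ in the range.
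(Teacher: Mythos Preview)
Your proposal has the overall direction reversed relative to the paper, and this is a genuine gap, not just a stylistic difference. The paper does \emph{not} descend from the Hamilton cycle. Instead it first builds a short path $P_0$ with many non-intersecting chords of length $2$ or $3$ (\Cref{lem:path-chords}), closes it into a short cycle $C_0$, and then applies the key technical \Cref{lem:long} to \emph{extend} $C_0$ to a cycle $C_1$ with $|C_1|$ in $[\ell,\ell+n/\eta\alpha]$ (or, failing that, with $G-V(C_1)$ $P_5$-free, handled separately by \Cref{lem:length-3-remainder}). The new rotation argument with the sets $\{\um,u\mthree\}$ lives inside the proof of \Cref{lem:long} and is used to \emph{grow} the cycle via outside paths; the eventual shortening from $|C_1|$ down to exactly $\ell$ is done with the black-box path-shortening lemmas of Dragani\'c--Munh\'a-Correia--Sudakov (\Cref{lem:shortening-min-deg,cor:shortening-indep}) together with the stockpiled short chords.

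Concretely, your descent plan breaks for two reasons. First, every one of the moves in \Cref{fig:cycle2,fig:cycle4,fig:cycle5} replaces a bounded number (one, two, or three) of cycle edges by an outside path of length at least $5$, so the resulting cycle is strictly \emph{longer}; you cannot ``choose the connecting path to have a prescribed length'' to realise a decrease while also insisting on length $\ge 6$. Second, and more fundamentally, when $C$ is Hamiltonian or near-Hamiltonian the set $W=V(G)\setminus V(C)$ is empty or tiny, so there are no outside paths at all and the dichotomy (either an extending edge among distinct $u,v$, or many length-$2$ chords $\um u\mthree$) never gets off the ground---the $\{\um,u\mthree\}$ set may well span an edge, but without an outside path you cannot exploit the cross-type edges, and there is no mechanism forcing the edge to be a length-$2$ chord. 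Your only proposed decrease step is a single length-$2$ chord giving $m-1$, but you have no argument that such a chord exists on an arbitrary long cycle, let alone that many non-crossing ones do. Finally, the appeal to \cite{draganic2022pancyclicity} for ``cycles of every length in $[(1-o(1))n,n]$'' is a misreading: that theorem gives pancyclicity only under the hypothesis $n\ge(2+\eps)\alpha^2$, which is exactly the regime \emph{excluded} here.
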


	\begin{restatable}[Middle range]{lemma}{lemMiddleRange} \label{lem:middle-range}
		Let $0 < \delta \ll 1$ and let $n, \alpha, \ell \ge 1$ be integers, satisfying: $\alpha$ is sufficiently large, $\sqrt{n}/2 \le \alpha \le \delta n^{2/3}$, and $n/\alpha \le \ell \le \delta (n/\alpha)^2$. Suppose that $G$ is an $n$-vertex graph with $\alpha(G) = \alpha$ and $\kappa(G) \ge \alpha$. Then $G$ contains a cycle of length $\ell$.
	\end{restatable}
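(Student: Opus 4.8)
The plan is to build the desired cycle of length $\ell$ (with $n/\alpha \le \ell \le \delta(n/\alpha)^2$) by first extracting from $G$ a long cycle together with enough ``external structure'' to allow controlled length adjustments, and then to run the new rotation-extension argument from the introduction to hit the exact target length. Since $\kappa(G) \ge \alpha(G) = \alpha$, the Chvat\'al--Erd\H os theorem gives a Hamilton cycle, so $G$ certainly has cycles of length up to $n$; the content is reaching the \emph{short} lengths in the stated window while having enough room to increment/decrement by small amounts. I would start with a shortest cycle $C$ of length at least $\ell$ (which exists), and aim to show that if $|C| > \ell$ then $C$ can be shortened. The key quantitative point is the density regime: $\alpha \le \delta n^{2/3}$ forces $G$ to be fairly dense relative to $\alpha$ (many vertices, small independence number), so away from $C$ there is a lot of graph to exploit, and connectivity $\ge \alpha$ lets us find many vertex-disjoint short paths between prescribed vertices of $C$ through $V(G)\setminus C$.

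The main engine is the rotation argument described in the introduction. Fix an orientation of $C$ and, for $u \in V(C)$, write $\um$, $u\mthree$ for the first and third predecessors. Take a large set $U \subseteq V(C)$ of pairwise $C$-distant vertices such that any two can be joined by a path of length $\ge 6$ with interior outside $C$ — here I would use Menger/connectivity ($\kappa \ge \alpha$) plus the abundance of vertices off $C$ (guaranteed since $|C|$ can be taken much smaller than $n$ in this regime) to produce such a $U$ with $|U| > \alpha/2$. Then $\{\um : u\in U\} \cup \{u\mthree : u\in U\}$ has size $> \alpha$, hence spans an edge. A case analysis on the type of this edge ($\um\vm$, $\um v\mthree$, $u\mthree v\mthree$, or the chord $\um u\mthree$ of length $2$) yields, in the first three cases, a cycle of length $|C| - (\text{small}) + (\text{path length})$; by choosing the connecting path length appropriately (the lemmas from \cite{draganic2022pancyclicity,draganic2023chvatal} on extending/shortening paths by a small amount give us this flexibility), we either shorten $C$ towards $\ell$, or, once $|C|$ is close to $\ell$, adjust in steps of size $1$ to land exactly on $\ell$. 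The troublesome case is when the only edges found are the length-$2$ chords $\um u\mthree$: as indicated in the introduction, this forces many pairwise non-crossing chords of length $2$ along $C$, and I would use these (together with an even-cycle Tur\'an bound \cite{bondy1974cycles} applied to an auxiliary graph, to rule out the structure persisting) to obtain the required cycle by a separate parity/packing argument.

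I would organise the proof as a downward induction (or minimal-counterexample argument) on $\ell$: assume $G$ has a cycle of every length in $[\ell+1, n]$ and produce one of length $\ell$, so that at each step I only need to \emph{decrease} the current cycle length by a bounded amount, which is exactly what one rotation step delivers. The lower boundary $\ell \ge n/\alpha$ is used to guarantee $|C|$ is large enough that $U$ of size $>\alpha/2$ with the required spacing exists; the upper boundary $\ell \le \delta(n/\alpha)^2$ ensures the cycle is short enough that $V(G)\setminus C$ is large (at least $n - \delta(n/\alpha)^2 \gg n/2$ vertices), which feeds the connectivity-based path-finding. The condition $\sqrt n/2 \le \alpha$ makes $n \le 4\alpha^2$, keeping the two windows (this lemma and \Cref{lem:upper-range}) overlapping so that together they cover all of $[3,n]$.

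\textbf{Main obstacle.} The hard part is the degenerate case of the rotation argument — when repeated rotations only ever produce the length-$2$ chord $\um u\mthree$ and never a ``useful'' edge — because there the naive count does not immediately give a length adjustment. Handling it requires showing that a graph in which this happens robustly must contain, off the cycle, a dense-enough configuration to force a short even cycle (contradicting the Tur\'an bound for even cycles \cite{bondy1974cycles}) or to allow a different rerouting; controlling this carefully, uniformly over the whole range of $\ell$, is where the real work lies, and is presumably the reason a dedicated section is devoted to this lemma.
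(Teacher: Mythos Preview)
Your proposal has a genuine gap: the rotation--extension argument from the introduction \emph{extends} cycles, it does not shorten them. Each of the rotations \eqref{eqn:C-2}, \eqref{eqn:C-4}, \eqref{eqn:C-5} removes at most four vertices of $C$ and inserts a path $P$ through $G - V(C)$ with $|P| \ge 5$, so the resulting cycle $C'$ always satisfies $|C'| > |C|$. Your plan is to start from a shortest cycle of length at least $\ell$ and shorten it via rotation, but a single rotation step cannot produce a strictly shorter cycle; the clause ``choosing the connecting path length appropriately'' does not help, since you explicitly require the outside path to have length at least~$6$. The degenerate outcome (many length-$2$ chords $u^{-}u^{-3}$) is likewise an extension device in the paper: it is used in the upper range to show that if rotation fails to extend, then $C$ carries many short chords, which then feed a separate shortening argument. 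You have also misplaced the tools: the $\{u^{-}, u^{-3}\}$ rotation appears only in the proof of \Cref{lem:upper-range}, and the Bondy--Simonovits even-cycle Tur\'an bound only in the proof of \Cref{lem:lower-range}.

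The paper's proof of the middle range runs in the opposite direction. One first constructs (via a distance-layer argument, \Cref{lem:n-over-alpha}) a cycle of length at most $n/\alpha$ containing a subpath $P_0$ with $\Omega(n/\alpha)$ non-intersecting chords of length $2$ or $3$. Writing $P_1$ for the complementary subpath and $x,y$ for its ends, one then repeatedly \emph{extends} $P_1$ in $G - (V(P_0)\setminus\{x,y\})$ using \Cref{lem:mid-range}, until its length lies in a window of width $O(n/\alpha)$ just above $\ell - \ell(P_0)$; the extension step is not a rotation but a rerouting of the current path through an auxiliary cycle outside it, whose existence is supplied by the cycle--complete Ramsey bound of \Cref{thm:ramsey-keevash}. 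Finally the chords in $P_0$ are spent (via \Cref{prop:chords}) to trim $P_0$ to length exactly $\ell$ minus the length of the extended path, and the concatenation is the desired $\ell$-cycle. Thus the fine adjustment comes from pre-built short chords in a fixed short path, not from rotation.
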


	\begin{restatable}[Lower range]{lemma}{lemLowerRange} \label{lem:lower-range}
		Let $0 < \delta \ll 1$ and let $n, \alpha, \ell \ge 1$ be integers, satisfying: $\alpha$ is sufficiently large, and $3 \le \ell \le \max\{n/\alpha, \delta \alpha\}$.
		Suppose that $G$ is an $n$-vertex graph with $\delta(G) > \alpha(G) = \alpha$.
		Then $G$ has a cycle of length $\ell$.
	\end{restatable}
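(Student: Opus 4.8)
The plan is to handle small and large values of $\ell$ separately, using cycle--complete Ramsey numbers for the large ones and the minimum-degree hypothesis for the small ones. Throughout, the feature of $\delta(G)>\alpha(G)$ that matters for short cycles is the elementary fact that any more than $\alpha$ vertices span an edge; applied to a single neighbourhood this already yields a triangle, so $\ell=3$ is settled, and it is what rules out the ``locally bipartite'' configurations that would otherwise obstruct short odd cycles.

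For $\ell$ above the (slowly growing) threshold in the known bounds on cycle--complete Ramsey numbers and with $\ell\le n/\alpha$, I would argue directly: since $\alpha(G)=\alpha$, two-colouring $E(K_n)$ on $V(G)$ with red meaning ``edge of $G$'' produces no blue $K_{\alpha+1}$, while $R(C_\ell,K_{\alpha+1})=(\ell-1)\alpha+1\le n$ whenever $\ell\le n/\alpha$; hence $G$ contains $C_\ell$. As this threshold lies far below $\delta\alpha$, it remains to exhibit $C_\ell$ for every $\ell$ with $3\le\ell\le\max\{n/\alpha,\delta\alpha\}$ which is $O(\alpha)$ — that is, all short cycles, of length at most a small fraction of $\delta(G)$.

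For these I would increase the cycle length one step at a time, starting from the triangle, with three supporting tools, each fed by $\delta(G)>\alpha(G)$. First, a long cycle when one is cheaply available: a longest path in $G$ has endpoints whose more-than-$\alpha$ neighbours all lie on it, which already produces a cycle of length at least $\delta(G)+1$, longer than every short target, and when $n/\alpha$ is larger the Ramsey bound above supplies a cycle of length close to $n/\alpha$. Second, a chord argument on such a cycle $C'$ (useful once $|C'|$ exceeds $2\alpha$): if no chord of $C'$ has length in an interval $[t,\alpha t]$ then $u_0,u_t,u_{2t},\dots$ would be an independent set of size exceeding $\alpha$, so $C'$ has a chord, hence a sub-cycle, of some length in $[t,\alpha t]$ for each admissible $t$ — yielding cycles at every multiplicative scale between $3$ and $|C'|/2$. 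Third, the path-extension and path-shortening lemmas of Dragani\'c, Munh\'a-Correia and Sudakov, used to pass from a cycle of length $k$ to one of length $k\pm1$ by performing a rotation and re-routing: the edge required for each such step is located among a carefully chosen set of more than $\alpha$ vertices — a neighbourhood, or a set of predecessors along the cycle as in the rotation argument of the introduction — and iterating fills the gaps between the coarse lengths from the second tool and marches up from the triangle.

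The main obstacle, I expect, is making this one-step extension robust with only a minimum-degree hypothesis rather than a connectivity one, so as to be sure the process never stalls before length $\max\{n/\alpha,\delta\alpha\}$ is reached. This is most delicate when $\alpha$ is large compared with $\sqrt n$: there $\delta\alpha$ can exceed both $n/\alpha$ and the length of any cycle one produces cheaply, so the chord shortcuts on a single long cycle fall short and the short cycles must be built up purely incrementally, with parity to be tracked and an exploitable edge among more than $\alpha$ vertices to be secured at each step.
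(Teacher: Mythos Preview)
Your Ramsey step for $\ell \le n/\alpha$ above the Keevash--Long--Skokan threshold matches the paper (which also invokes \Cref{thm:ramsey-erdos} below that threshold). The genuine gap is the other regime, $\delta\alpha > n/\alpha$, where you must produce cycles of every length up to $\delta\alpha$. None of your three tools delivers this. The longest-path argument gives a cycle of length $\ge \alpha+2$, but your chord argument on it yields sub-cycles only at scattered lengths (one value $jt+1$ per choice of $t$), and once a sub-cycle drops below $\alpha$ vertices you cannot iterate. More importantly, the DMS shortening and extension lemmas are calibrated for long paths: \Cref{cor:shortening-indep} requires $|P|>4\alpha$ and then shortens by up to $\lceil 20\alpha^2/|P|\rceil$, while \Cref{lem:shortening-min-deg} requires $|P|\ge 20n/\delta(G)$; both are vacuous for paths of length $\le\delta\alpha$, so your $\pm1$ step is unsupported precisely in the regime you yourself flag as delicate.

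The paper's route here is quite different and not incremental. For even $\ell\ge 8$ it applies the Bondy--Simonovits Tur\'an bound (\Cref{thm:even-cycle}): since $\alpha\ge\sqrt{n/\delta}$ in this regime, $e(G)>n\alpha/2$ is comfortably above the $C_{2k}$ threshold for every relevant $k$. For odd $\ell$ it first proves (\Cref{lem:odd-cycles}) that $G$ contains a subgraph $H$ with $\Omega(n\alpha)$ edges such that every edge of $H$ lies on a $C_3$ or $C_5$ of $G$ whose remaining vertices avoid $H$; applying Bondy--Simonovits inside $H$ gives an even cycle, and replacing one of its edges by the corresponding path of length $2$ or $4$ corrects the parity. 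Lengths $3\le\ell\le 7$ are handled by separate ad-hoc constructions (\Cref{lem:short-cycles}), each locating an edge inside a carefully chosen set of more than $\alpha$ vertices, not by a uniform $+1$ step.
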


	In addition, we shall need the following result, due to Dragani\'c, Munh\'a-Correia, and Sudakov \cite{draganic2022pancyclicity}, allowing us to assume that the independence number is relatively large in terms of the number of vertices.

	\begin{theorem}[Dragani\'c--Munha-Correia--Sudakov \cite{draganic2022pancyclicity}] \label{thm:draganic1}
		Let $\eps > 0$ and let $n$ be sufficiently large. Suppose that $G$ is a Hamiltonian graph on $n$ vertices, satisfying $n \ge (2 + \eps)\alpha(G)^2$. Then $G$ is pancyclic.
	\end{theorem}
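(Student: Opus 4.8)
The plan is to fix a Hamilton cycle $C = v_1 v_2 \cdots v_n v_1$ of $G$, set $\alpha = \alpha(G)$ (so that $\alpha \le \sqrt{n/(2+\eps)}$), and find, for every $\ell \in [3,n]$, a cycle of length exactly $\ell$, treating $\ell$ by one of three different methods according to its size. It is convenient to split $[3,n]$ into a \emph{lower range} $3 \le \ell \le n/\alpha$, an \emph{upper range} $(1-\eps/4)\,n \le \ell \le n$, and the \emph{middle range} $n/\alpha \le \ell \le (1-\eps/4)\,n$ in between; the case $\ell = n$ is $C$ itself.

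In the lower range Hamiltonicity is not used at all: an $n$-vertex graph with independence number $\alpha$ automatically contains $C_\ell$ as soon as $n$ exceeds the cycle--complete Ramsey number $r(C_\ell, K_{\alpha+1})$. For $\ell$ not too small this number equals $(\ell-1)\alpha + 1$, which is at most $n$ exactly when $\ell \le n/\alpha$, by the known evaluation of cycle--complete Ramsey numbers in that regime. For the remaining small $\ell$ one has $r(C_\ell, K_{\alpha+1}) = o(\alpha^2)$: for $\ell = 3$ this is the bound $R(3, \alpha+1) = O(\alpha^2/\log\alpha)$; for even $\ell$ it follows from the Bondy--Simonovits bound on $\mathrm{ex}(n; C_\ell)$ together with the fact that a graph with independence number $\alpha$ has $\Omega(n^2/\alpha)$ edges; and for odd $\ell$ it follows from the corresponding cycle--complete Ramsey estimates. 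Either way $r(C_\ell, K_{\alpha+1}) \le (2+\eps)\alpha^2 \le n$ once $\alpha$ is large. (Already this range requires $n$ to be at least of order $\alpha^2$.)

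For the upper range I would use $C$ together with a P\'osa-type rotation--extension argument. To obtain a cycle of length $n - j$ with $j \le \eps n / 4$, delete a carefully chosen set of $j$ vertices of $C$ so that what remains of $C$ is a path $P$ on $n - j$ vertices, perform rotations at both ends of $P$ inside $G$, and close $P$ up the moment some endpoint reachable by rotation at one end is adjacent to the current endpoint at the other. If no such closing edge ever appears, the P\'osa lemma forces the set of endpoints reachable by rotations, together with its neighbourhood, to have size $O(\alpha)$, so the remaining $n - j - O(\alpha) \gg \alpha$ vertices contain an independent set of size exceeding $\alpha$ --- a contradiction. The delicate point is choosing the $j$ deleted vertices so that $P$ stays long and amenable to rotations; since $n - j$ is still within a constant factor of $n = \Omega(\alpha^2)$ there is ample room.

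The middle range $n/\alpha \le \ell \le (1-\eps/4)\,n$ is where essentially all of the difficulty lies, and where a genuinely new argument is required. Neither previous method applies: the Ramsey and Tur\'an bounds are far too weak once $\ell \gg n/\alpha$, and a single chord of $C$ of length $d$ produces only cycles of lengths $d+1$ and $n - d + 1$, so the naive ``shortcut $C$ by one chord'' approach would need every chord-length in $[2, n/2]$ to be present, which may fail. The plan is instead to work with chords and short detours along $C$ simultaneously: maintain a cycle of the current length, lengthen or shorten it by controlled increments by rerouting through chords and short arcs of $C$, and analyse the configurations in which this process gets stuck. The crux --- and the step I expect to be by far the hardest --- is to show that every such stuck configuration is forced to be so structured (producing, say, many pairwise non-crossing $2$-chords, or directly a large set of pairwise non-adjacent vertices) that it contradicts $n \ge (2+\eps)\alpha^2$. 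This is where one must push beyond the classical Chv\'atal--Erd\H{o}s rotation argument, and it demands controlling the length of the evolving cycle precisely enough to land on $\ell$ exactly while never drifting away from the dense part of $G$.
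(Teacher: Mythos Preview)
This theorem is not proved in the paper at all: it is quoted verbatim as a result of Dragani\'c, Munh\'a-Correia, and Sudakov \cite{draganic2022pancyclicity} and used as a black box in the proof of \Cref{thm:main} (it disposes of the case $n \ge 4\alpha^2$). There is therefore no ``paper's own proof'' to compare your proposal against.

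As for the proposal itself, it is a high-level plan rather than a proof, and you yourself flag that the middle range is where ``essentially all of the difficulty lies'' and ``a genuinely new argument is required,'' without supplying one. The lower-range paragraph is essentially fine. The upper-range sketch is too vague to be assessed: you assert that a P\'osa rotation argument closes a path of length $n-j$ into a cycle, but the standard P\'osa lemma gives a set of reachable endpoints of size at most roughly $3\alpha$, and you have not explained why the complement of this set being large (size $\gg \alpha$) yields a contradiction --- a set of size $\gg \alpha$ need not be independent, so the sentence ``the remaining $n-j-O(\alpha) \gg \alpha$ vertices contain an independent set of size exceeding $\alpha$'' is a non sequitur as written. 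For the middle range you have only described what a successful argument would have to achieve, not how to achieve it; in particular you have not said how to change the length of the current cycle by a \emph{controlled} amount, which is exactly the heart of the matter. So as it stands this is an outline with the key step missing, not a proof.
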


	The proof of our main result, \Cref{thm:main}, follows directly from the last four results. 

	\begin{proof}[Proof of \Cref{thm:main}]
		Let $\delta$ be a constant satisfying $0 < \delta \ll 1$.
		Let $G$ be a graph on $n$ vertices, where $n$ is sufficiently large.
		Write $\alpha(G) = \alpha$, and let $\ell$ be an integer satisfying $3 \le \ell \le n$. We need to show that $G$ contains a cycle of length $\ell$.

		First note that $G$ is Hamiltonian, by the assumption $\kappa(G) > \alpha(G)$, and the well-known result of Chv\'atal and Erd\H{o}s \cite{chvatal1972note} mentioned in the introduction.
		By \Cref{thm:draganic1}, if $n \ge 4\alpha^2$ then $G$ is pancyclic, so in particular it contains a cycle of length $\ell$.

		So suppose that $n \le 4\alpha^2$, and notice that at least one of the following holds: $\ell \ge n/\delta \alpha$; $n/\alpha \le \ell \le \delta(n/\alpha)^2$ and $\sqrt{n}/2 \le \alpha \le \delta n^{2/3}$; or $3 \le \ell \le \max\{n/\alpha, \delta \alpha\}$. 
		Indeed, suppose that the first and third properties do not hold, namely $\max\{n/\alpha, \delta \alpha\} \le \ell \le n/\delta \alpha$. Then $\delta \alpha \le n/\delta \alpha$, implying that $\alpha \le \sqrt{n}/\delta \le \delta n^{2/3}$. Thus also $n/\alpha \le \ell \le n/\delta \alpha \le \delta(n/\alpha)^2$ (for the last inequality, notice that $n$ is large enough and $n \le 4\alpha^2$, implying that $n/\alpha$ is large enough). This shows that the second property holds. Hence, we can apply one of \Cref{lem:upper-range,lem:middle-range,lem:lower-range} to show that $G$ contains a cycle of length $\ell$.
	\end{proof}

\section{Preliminaries} \label{sec:prelims}

	We will prove each of \Cref{lem:upper-range,lem:middle-range,lem:lower-range} in a separate section, and mention the relevant preliminaries in these sections. Here we first define some notation, and then mention a few preliminaries that will be used in more than one of these sections.

\paragraph{Chords.}
	A \emph{chord} in a path $P$ is an edge joining two non-consecutive vertices in $P$. The \emph{length} of a chord is the length of the subpath of $P$ between the two vertices of the chord. We call two chords \emph{non-intersecting} if the interiors of the subpaths of $P$ between the two vertices of each chord are disjoint. We will also use these notions with respect to a cycle $C$ (when talking about the subpath of $C$ between the vertices of the chord, we mean the shorter of these two subpaths, and in fact we will only consider chords of length $2$ or $3$). See \Cref{fig:chords} for an illustration of this notion.
	\begin{figure}[h]
		\centering
		\includegraphics[scale = .7]{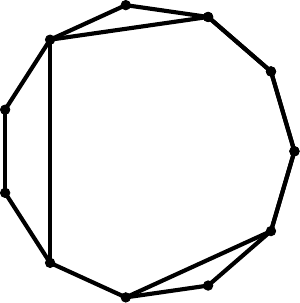}
		\caption{A cycle with three non-intersecting chords: two of length $2$ and one of length $3$}
		\label{fig:chords}
	\end{figure}
	
\paragraph{Predecessors in cycles.}
	Some of our proofs use arguments involving rotations, and as such the following notation will be useful.
	Given a cycle $C$ with an arbitrary direction, and a vertex $u$ in $C$, we denote by $\um$ the predecessor of $u$ in $C$ (according to the arbitrary direction of $C$). More generally, we will write $u^{-i}$ for the vertex in $C$ obtained by taking $i$ steps backwards from $u$.
	Additionally, given two vertices $u,v$ in $C$, we write $C_{u \to v}$ for the subpath of $C$ with ends $u,v$ which contains the successor of $u$, and write $C_{u \nto v}$ for the other subpath of $C$ with ends $u,v$. Similarly, for a path $P$ and vertices $u,v$ in $P$, we write $P_{u \to v}$ for the subpath of $P$ with ends $u,v$.

\paragraph{More path and cycle notation.}
	For a path $P$ we write $|P|$ for its \emph{order}, namely its number of vertices, and $\ell(P)$ for its \emph{length}, namely its number of edges.
	As the order and length of a cycle are the same, we normally stick to the more standard notation $|C|$ for its length.
	For distinct vertices $v_1, \ldots, v_{\ell}$, denote by $v_1\ldots v_{\ell}$ the path with edges $v_1v_2, \ldots, v_{\ell-1}v_{\ell}$, and by $(v_1 \ldots v_{\ell})$ the cycle with edges $v_1v_2, \ldots, v_{\ell-1}v_{\ell}, v_{\ell}v_1$.
	For vertices $x,y$, an \emph{$xy$-path} is a path with ends $x$ and $y$.
	If $P$ is an $xy$-path and $Q$ is a $yz$-path, and $P$ and $Q$ share only the vertex $y$, then $xPyQz$ denotes the path which is the concatenation of $P$ and $Q$.
	Similarly, if $P$ and $Q$ are $xy$-paths that do not share vertices other than $x$ and $y$, then $(xPyQx)$ is the cycle which is the union of $P$ and $Q$.
	We sometimes use $PQ$ and $(PQ)$ for the same things if the ends are not explicitly known.
	We denote by $P_k$ the path on $k$ vertices.

\subsection{Paths with many short chords}
	The next lemma finds a path with many short, non-intersecting chords.

	\begin{lemma} \label{lem:path-chords}
		Suppose that $G$ is a graph satisfying $\delta(G) > \alpha(G)$.
		Then for every integer $k \le \alpha(G)/6$, there is a path of length at most $3k$ which has $k$ pairwise non-intersecting chords of length $2$ or $3$, at least one of which has length $2$.
	\end{lemma}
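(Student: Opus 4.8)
The plan is to build the required path greedily, one \emph{block} at a time, where a block is a short subpath carrying exactly one of the $k$ chords and sharing with its neighbouring blocks only its two endpoints; then the interiors of the chords lie in disjoint stretches of the path and are automatically non-intersecting, and each block contributes at most $3$ to the total length. Write $\alpha = \alpha(G)$ and note $\delta(G) \ge \alpha + 1$. For the first block I would pick any vertex $v$: since $|N(v)| \ge \alpha + 1 > \alpha(G)$, the set $N(v)$ spans an edge $ac$, so $avc$ is a path of length $2$ carrying the length-$2$ chord $ac$ (with interior $\{v\}$). This one block already supplies the chord of length $2$ that is demanded, so every further chord may have length $2$ or $3$. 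It then suffices to show: if $P$ is a path carrying $t < k$ pairwise non-intersecting chords with $|V(P)| \le 3t$, then $P$ can be extended at one of its endpoints by a block on $2$ or $3$ new vertices carrying one further chord of length $2$ or $3$.

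For the extension step, let $w$ be an endpoint of $P$, put $U = V(P)$, so that $|U| \le 3t \le 3(k-1) \le \alpha/2 - 3$ by the hypothesis $k \le \alpha/6$, and consider $S = N(w) - U$; then $|S| \ge \delta(G) - |U| \ge \alpha/2 + 4$. If $S$ contains an edge $\{a,c\}$, I append $a$ then $c$ after $w$: the new block $wac$ carries the length-$2$ chord $wc$ (an edge since $c \in N(w)$). Otherwise $S$ is independent, and here I would pass to a second neighbourhood: fix $a \in S$ and set $M = N(a) - U$, so $|M| \ge \delta(G) - |U| \ge \alpha/2 + 4$ as well; since $S$ is independent and $a \in S$ we have $N(a) \cap S = \emptyset$, so $M$ and $S$ are disjoint and $|M \cup (S - \{a\})| = |M| + |S| - 1 \ge \alpha + 7 > \alpha(G)$. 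Hence $M \cup (S - \{a\})$ is not independent, so it contains an edge $e$, which has an endpoint in $M$ because $S - \{a\}$ is independent. If both endpoints $b,c$ of $e$ lie in $M$, then $\{a,b,c\}$ is a triangle (as $b,c \in N(a)$ and $bc$ is an edge), and appending $a,b,c$ after $w$ gives the block $wabc$ with the length-$2$ chord $ac$. If $e = \{b,a'\}$ with $b \in M$ and $a' \in S - \{a\}$, then $waba'$ is a path, and appending $a,b,a'$ after $w$ gives a block with the length-$3$ chord $wa'$ (an edge since $a' \in N(w)$). In every case the block uses at most $3$ vertices outside $U$, all pairwise distinct, so $|V(P')| \le 3(t+1)$; the previous chords remain chords of $P'$ with unchanged interiors (since we extended at an endpoint), and the interior of the new chord lies among the new vertices and so is disjoint from the old chord interiors.

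Iterating the extension until $t = k$ produces a path on at most $3k$ vertices, hence of length at most $3k$, carrying $k$ pairwise non-intersecting chords of length $2$ or $3$, the first of which has length $2$ — exactly what is required.

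The only step that needs an actual idea is the sub-case where $S = N(w) - U$ is independent: working only inside $N(w)$ yields an independent set of size roughly $\alpha$, which is not a contradiction. The fix is the ``doubling'' move of passing to $N(a) - U$ for some $a \in S$ and combining it with $S - \{a\}$; the bound $|U| \le 3k \le \alpha/2$ is precisely calibrated so that this combined set has size strictly more than $\alpha$, forcing the edge that powers the extension (or, if that edge happens to lie inside $N(a) - U$, a triangle, which is even better). When writing the argument in full, the routine verifications are that the vertices introduced in each sub-case are genuinely new and pairwise distinct, and that the indicated chord really has the claimed length $2$ or $3$ on the extended path.
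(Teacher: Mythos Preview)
Your proof is correct and follows essentially the same approach as the paper's: both arguments build the path greedily from an initial triangle, extending at an endpoint by either finding a triangle among new vertices (yielding a length-$2$ chord) or, failing that, combining two ``half-sized'' neighbourhoods outside the current path to force an edge (yielding a length-$2$ or length-$3$ chord). Your case analysis is packaged slightly differently---you look for an edge in $M\cup(S-\{a\})$ in one step, whereas the paper separately treats whether a neighbour of the endpoint lies in a triangle---but the underlying idea and the arithmetic (the bound $3k\le\alpha/2$ making the union exceed $\alpha$) are identical.
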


	\begin{proof}
		Write $\alpha = \alpha(G)$.
		We define paths $P_1, \ldots, P_k$ such that the following holds for $i \in [k]$: $|P_i| \le 3i$; and $P_i$ has $i$ pairwise non-intersecting chords of length $2$ or $3$, at least one of which with length $2$.
		To start, notice that $G$ contains a triangle (as the neighbourhood of any vertex has size at least $\alpha + 1$ and is thus not independent). Let $x,y,z$ be the vertices of a triangle, define $P_1 = xyz$, and notice that $xz$ is a chord of length $2$ in $P_1$. 
		Now suppose that $P_1, \ldots, P_i$ are defined and $i \in [k-1]$, and let $x$ be one of $P_i$'s ends. Consider the graph $G' = G - (V(P_i) - \{x\})$. If $x$ is in a triangle in $G'$, let $y, z$ be the other two vertices in such a triangle, define $P_{i+1} = P_i xyz$, and observe that $xz$ is a chord of length $2$ that does not intersect the chords of $P_i$.
		Similarly, if $x$ has a neighbour $y$ which is in a triangle $yzw$ in $G'$ (with $z,w \neq x$), define $P_{i+1} = P_i xyzw$, and notice that $yw$ is a chord of length $2$.
		It remains to consider the case where $x$ and one of its neighbours $y$ are not in a triangle in $G$. Then the sets $N_x := N_{G'}(x) - \{y\}$ and $N_y := N_{G'}(y) - \{x\}$ are independent and disjoint. Notice that $|N_x|, |N_y| > \alpha - 3k \ge \alpha/2$. Thus $N_x \cup N_y$ is a set of size larger than $\alpha$, showing that it contains an edge $zw$, which must have one end in $N_x$ and the other in $N_y$; say $z \in N_x$ and $w \in N_y$. Define $P_{i+1} = P_i xzwy$, and notice that $xy$ is a chord of length $3$ which does not intersect any chords in $P_i$.
		This completes the proof that a sequence $P_1, \ldots, P_k$ with the required properties exists.
		The path $P_k$ satisfies the requirements of the lemma.
	\end{proof}

	A path $P$ as in \Cref{lem:path-chords} has the useful property that it can be shortened by any amount up to $k$. This is formalised in the next proposition.

	\begin{proposition} \label{prop:chords}
		Let $P$ be a path with ends $x,y$ and $k$ non-intersecting chords of length $2$ or $3$, with at least one of length $2$.
		Then, for every $k' \le k$, there is an $xy$-path $P'$ with $V(P') \subseteq V(P)$ and $|P'| = |P| - k'$.
	\end{proposition}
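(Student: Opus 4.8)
The plan is to observe that each individual chord can be ``applied'' to delete a fixed small number of vertices from $P$, that non-intersecting chords can be applied simultaneously without interference, and finally that a one-line counting argument shows every target length in the stated range is realised by an appropriate sub-collection of chords.

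First I would set up coordinates: write $P = p_0 p_1 \cdots p_m$ with $p_0 = x$ and $p_m = y$, so $|P| = m+1$. A chord of length $2$ is an edge $p_i p_{i+2}$, and a chord of length $3$ is an edge $p_i p_{i+3}$. Applying such a chord means replacing the subpath $p_i \cdots p_{i+2}$ (resp. $p_i \cdots p_{i+3}$) of $P$ by this single edge; this deletes exactly $\ell - 1 \in \{1, 2\}$ interior vertices, where $\ell$ is the chord's length, and it never deletes $x$ or $y$ since the ends of $P$ are never interior to a subpath of $P$. The hypothesis that the chords are non-intersecting says exactly that the open index-intervals $(i, i+\ell)$ associated with distinct chords are disjoint, so for any subset $S$ of the given chords all the corresponding replacements can be carried out at once; the result is still an $xy$-path $P_S$ with $V(P_S) \subseteq V(P)$, and
\[
  |P_S| = |P| - \sum_{e \in S} \bigl(\ell(e) - 1\bigr),
\]
a sum of $|S|$ terms each equal to $1$ or $2$. (Checking that simultaneous application really yields a path — including when two chords share an endpoint, which is permitted — is routine.)

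It then remains to hit every value: I claim every integer $k' \in \{0, 1, \ldots, k\}$ equals $\sum_{e \in S}(\ell(e) - 1)$ for some subset $S$. Let $a \ge 1$ be the number of length-$2$ chords and $b = k - a$ the number of length-$3$ chords, so choosing $i$ of the former and $j$ of the latter removes $i + 2j$ vertices. For $k' \le a$ take $(i,j) = (k', 0)$; for $a < k' \le k$ take $j = \ceil{(k'-a)/2}$ and $i = k' - 2j \in \{a-1, a\}$, and verify $0 \le i \le a$ (using $a \ge 1$) and $0 \le j \le b$ (using $k' - a \le b$ and $b \ge 1$). Setting $P' = P_S$ for the corresponding $S$ finishes the proof. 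The only place needing any thought is this last counting step, and it is precisely here that the hypothesis that at least one chord has length $2$ is essential: without it all removals would be even, so odd values of $k'$ could not be obtained.
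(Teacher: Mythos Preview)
Your proof is correct and follows essentially the same approach as the paper: reduce to showing that every $k' \le k$ can be written as $a' + 2b'$ with $0 \le a' \le a$ and $0 \le b' \le b$, then replace the corresponding subpaths by chords. The only cosmetic difference is the bookkeeping in the counting step --- the paper takes $b' = \min\{\lfloor k'/2 \rfloor, b\}$ and $a' = k' - 2b'$, whereas you prioritise length-$2$ chords first --- but both choices work and the hypothesis $a \ge 1$ is used in the same way.
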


	\begin{proof}
		Consider a collection of $k$ non-intersecting chords of length $2$ or $3$ in $P$, at least one of which having length $2$, and denote the number of chords of length $2$ in this collection by $a$ and the number of chords of length $3$ by $b$.
		We claim that there exist $a',b'$ such that $0 \le a' \le a$, $0 \le b' \le b$, and $a' + 2b' = k'$.
		Indeed, taking $b' = \min\{\floor{k'/2}, b\}$ and $a' = k' - 2b'$ works. Let $e_1, \ldots, e_{a'+b'}$ be non-intersecting chords, $a'$ of which of length $2$ and the rest of length $3$, and consider the path $P'$ obtained by replacing the subpath of $P$ between the ends of $e_i$ by $e_i$ itself, for $i \in [a'+b']$. Then $|P'| = |P| - a' - 2b' = |P| - k'$.
	\end{proof}

\subsection{Cycle-complete Ramsey numbers}
	We now mention two results regarding cycle-complete Ramsey numbers. These refer to the Ramsey number $r(C_{\ell}, K_s)$, namely the minimum $n$ such that every $n$-vertex graph contains either a cycle of length $\ell$ or an independent set of size $s$.
	The first is an early result about these numbers, which tends to be useful when $\ell$ is small.

	\begin{theorem}[Erd\H{o}s--Faudree--Rousseau--Schelp \cite{erdos1978cycle}] \label{thm:ramsey-erdos}
		Let $\ell \ge 3$ and $s \ge 2$, and write $x = \floor{\frac{\ell-1}{2}}$. Then 
		\begin{equation*}
			r(C_{\ell}, K_s) \le \left( (\ell-2)(s^{1/x} + 2) + 1 )\right)(s-1).
		\end{equation*}
	\end{theorem}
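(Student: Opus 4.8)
I would set up a contradiction and bound the order of the extremal graph. Write $D:=(\ell-2)(s^{1/x}+2)$, so the assertion is that a $C_\ell$-free graph $G$ with $\alpha(G)\le s-1$ has at most $(D+1)(s-1)-1$ vertices; suppose instead that $G$ is $C_\ell$-free, $\alpha(G)\le s-1$ and $|V(G)|\ge(D+1)(s-1)$. The first step is the usual passage to a dense subgraph: if $G$ were $(D-1)$-degenerate then $\chi(G)\le D$, whence $\alpha(G)\ge|V(G)|/D>s-1$, a contradiction; so $G$ contains a subgraph $H$ with $\delta(H)\ge D$, which is again $C_\ell$-free with $\alpha(H)\le s-1$. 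The quantity I want to beat is $D^{\,x}$: since $D\ge(\ell-2)s^{1/x}$ we get $D^{\,x}\ge(\ell-2)^x s>s-1$, so it is enough to exhibit, somewhere inside $H$, an independent set of size of the order of $D^{\,x}$.

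To produce it, I would anchor the argument on a minimal long cycle. Since $\delta(H)\ge D\ge\ell-1$, $H$ has a cycle of length at least $\ell$; let $C$ be a shortest one, of length $L$, and assume $L>\ell$ (otherwise $C$ is the cycle we are after). The minimality of $L$ gives a property far stronger than $C_\ell$-freeness: $H$ has no cycle whose length lies anywhere in the interval $[\ell,L-1]$. Now grow a breadth-first exploration outward from $C$ through $H$ for $x$ levels. The point is that the branches cannot reconnect prematurely: if exploration paths of lengths $a,b$ (with $a,b\le x$, so $a+b\le 2x\le\ell$) issued from vertices $u,v\in V(C)$ meet, or are joined by an edge, then together with one of the two arcs of $C$ between $u$ and $v$ they span a cycle of length $a+b+(\text{arc length})$, and by varying the pair $u,v$ and the arc one could make this length fall in $[\ell,L-1]$ or equal $\ell$ — impossible. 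Hence the exploration forest is essentially freely branching, of size of order $D^{\,x}$, and, crucially, vertices at level $x$ reached through distinct branches are pairwise non-adjacent, so from them one extracts an independent set of size $\gtrsim D^{\,x}>s-1$, contradicting $\alpha(H)\le s-1$.

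The step I expect to be the genuine obstacle is exactly the quantitative core of the previous paragraph. Because $C_\ell$-freeness supplies no girth bound, both the "no premature reconnection" claim and the resulting branching lower bound must be squeezed entirely out of the "no cycle of length in $[\ell,L-1]$" property of $C$; and one then has to turn the large explored set into a genuinely independent set, tracking constants closely enough to reach the stated $(D+1)(s-1)$. This is also where the parity of $\ell$ enters and pins the exponent at $x=\lfloor(\ell-1)/2\rfloor$ rather than $(\ell-1)/2$: for even $\ell$ a single radius-$x$ neighbourhood can already close up to a short forbidden cycle, which costs one level of branching. As a sanity check, the whole scheme collapses to one line when $x=1$, i.e. $\ell\in\{3,4\}$: a path on $\ell-1$ vertices inside $N_H(v)$ closes with $v$ into a $C_\ell$, so $H[N_H(v)]$ is $P_{\ell-1}$-free, and the Erd\H{o}s--Gallai/Chv\'atal bound $r(P_{\ell-1},K_s)=(\ell-2)(s-1)+1$ then forces $\delta(H)\le(\ell-2)(s-1)<D$, already absurd; the content of the theorem for $\ell\ge5$ is precisely that iterating this local step replaces the factor $s-1$ by $s^{1/x}$.
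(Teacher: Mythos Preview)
The paper does not prove this theorem: it is stated in the preliminaries as a result quoted from \cite{erdos1978cycle} and is used as a black box (in the proof of \Cref{lem:lower-range}). There is therefore no ``paper's own proof'' to compare your attempt against.

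As for your sketch itself: the opening reduction is correct and is exactly how the original argument begins --- if $G$ is $C_\ell$-free with $\alpha(G)\le s-1$ and $|V(G)|\ge(D+1)(s-1)$, then $G$ cannot be $\lfloor D\rfloor$-colourable, so it has a subgraph $H$ of minimum degree at least $D$. Your endgame target $D^x>s-1$ is also the right quantity. The part you correctly flag as the obstacle is, however, not yet a proof: growing a BFS from the vertices of a shortest long cycle $C$ and asserting that level-$x$ vertices in distinct branches are pairwise non-adjacent does not follow from ``no cycle of length in $[\ell,L-1]$'' alone. If $u',v'$ are at level $x$ above $u,v\in V(C)$ and $u'v'$ is an edge, you obtain a cycle of length $2x+1$ plus an arc of $C$; you cannot ``vary $u,v$'' after the fact, since $u,v$ are determined by $u',v'$, and for a specific pair the two arc lengths need not hit $[\ell,L-1]$. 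The original Erd\H{o}s--Faudree--Rousseau--Schelp argument does not anchor at a cycle in this way; it works with BFS layers from a single vertex and controls the structure between consecutive layers to force either a $C_\ell$ or a large independent set, which is where the exponent $x=\lfloor(\ell-1)/2\rfloor$ genuinely appears. Your $\ell\in\{3,4\}$ base case is fine, but the inductive/iterative step for $\ell\ge 5$ would need to be redone along those lines.
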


	The next result determines $r(C_{\ell}, K_s)$ precisely when $\ell$ is large with respect to $s$, and resolves a conjecture from \cite{erdos1978cycle}. 

	\begin{theorem}[Keevash--Long--Skokan \cite{keevash2021cycle}] \label{thm:ramsey-keevash}
		There is a constant $c \ge 1$ such that the following holds for $s \ge 3$ and $\ell \ge \frac{c \log s}{\log \log s}$.
		\begin{equation*}
			r(C_{\ell}, K_s) = (\ell-1)(s-1) + 1.
		\end{equation*}
	\end{theorem}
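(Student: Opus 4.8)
The lower bound $r(C_\ell,K_s)\ge(\ell-1)(s-1)+1$ is witnessed by the disjoint union of $s-1$ copies of $K_{\ell-1}$: it has $(\ell-1)(s-1)$ vertices, no independent set of size $s$ (such a set meets each clique at most once), and no cycle of length $\ell$ (each component has only $\ell-1$ vertices). So the content is the upper bound: given a graph $G$ on $N:=(\ell-1)(s-1)+1$ vertices with $\alpha(G)\le s-1$, one must produce a cycle of length exactly $\ell$. The plan has two parts --- (a) find a cycle of length at least $\ell$, then (b) trim it to length exactly $\ell$ --- with a stability argument disposing of the configurations for which (a) has nothing to offer.

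For (a): by the Gallai--Milgram theorem, $G$ admits a path cover by at most $\alpha(G)\le s-1$ vertex-disjoint paths, so its longest path has at least $\lceil N/(s-1)\rceil=\ell$ vertices; in particular $G$ is far from edgeless. To convert this into a long cycle one wants to work inside an induced subgraph $H$ with $|H|\ge\ell$, $\alpha(H)\le\alpha(G)$, $\delta(H)>\alpha(H)$ and $\kappa(H)>\alpha(H)$: then $H$ is Hamiltonian by the Chv\'atal--Erd\H{o}s theorem, giving a cycle through at least $\ell$ vertices. Extracting such an $H$ --- by iteratively deleting low-degree vertices and splitting along sparse cuts while keeping track of the vertex count --- is where I expect the main difficulty to lie, because it may simply fail: the alternative is that $G$ is within bounded edit distance of a disjoint union of at most $s-1$ cliques. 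In that extremal-like regime, since $N$ exceeds $(\ell-1)(s-1)$, pigeonhole forces a near-clique on at least $\ell-O(1)$ vertices, from which a cycle of length exactly $\ell$ is assembled by hand, incorporating a bounded number of external vertices and adjusting the length with a chord-replacement step as in \Cref{prop:chords}. Making this dichotomy quantitative enough to feed the connected case is the crux of the whole argument.

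For (b): inside $H$ (so $\delta(H)>\alpha(H)$), \Cref{lem:path-chords} produces a short path carrying many non-intersecting chords of length $2$ or $3$; splicing this path into the Hamilton cycle of $H$ and using the connectivity of $H$, one obtains a cycle through at least $\ell$ vertices that still admits shortening by any small amount via \Cref{prop:chords}-type chord replacements, hence can be brought down to length exactly $\ell$ --- closing the resulting path into a cycle of the correct length by a P\'osa-rotation argument, which works because $\alpha(H)\le s-1$ is tiny compared with the $\Theta(\ell)$ vertices in play (the set of endpoints reachable by rotations outgrows $\alpha(H)$ within the available segment, forcing a closing edge). The hypothesis $\ell\gtrsim\log s/\log\log s$ is precisely what gives the rotations, and the counting in (a), enough room; it also marks the threshold below which denser lower-bound constructions are known, and already for $\ell$ of order $\log s$ the cruder bound of \Cref{thm:ramsey-erdos} fixes $r(C_\ell,K_s)$ up to a constant factor.
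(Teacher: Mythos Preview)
This theorem is not proved in the paper: it is quoted as an external result of Keevash, Long, and Skokan \cite{keevash2021cycle}, stated among the preliminaries and then used as a black box in the proofs of \Cref{lem:mid-range} and \Cref{lem:lower-range}. There is therefore no proof in the paper to compare your proposal against.

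As for the proposal itself, the lower bound and the overall two-step plan (find a long cycle, then trim) are reasonable, but there is a substantial gap that you yourself flag: the structural dichotomy in step~(a) --- either one can extract an induced subgraph $H$ with $|H|\ge\ell$ and $\kappa(H)>\alpha(H)$, or $G$ is close to a disjoint union of cliques --- is asserted rather than established, and this is exactly where the work lies. Iterated low-degree deletion and sparse-cut splitting do not obviously terminate with the required bounds when $\ell$ is only of order $\log s/\log\log s$: the vertex count $N=(\ell-1)(s-1)+1$ leaves essentially no slack to absorb losses, and the ``near-clique'' alternative has to be pinned down quantitatively before any hand-assembly of an $\ell$-cycle can proceed. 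The actual Keevash--Long--Skokan argument is considerably more delicate, passing through sublinear expanders and a tailored embedding of cycles of prescribed length inside them; your rotation-and-chord scheme in~(b) is in the right spirit, but it would need a far sharper structural input from~(a) than what you have sketched. Note also that invoking \Cref{lem:path-chords} and \Cref{prop:chords} here is not circular --- those statements do not rely on \Cref{thm:ramsey-keevash} --- but they were designed for a different regime and do not by themselves close the gap.
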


\section{Upper range} \label{sec:upper}

	Our aim in this section is to prove \Cref{lem:upper-range}, restated here, which guarantees the existence of long cycles.
	\lemUpperRange*

	This section is the one where our proof differs the most from \cite{draganic2023chvatal}. Key new components here are a new rotation-extension technique, and the use of rotation-extension arguments given a cycle $C$ for which there are no long paths outside of $C$.

	The proof proceeds as follows.
	First, we apply \Cref{lem:path-chords} to find a path $P_0$ with many non-intersecting chords of length $2$ or $3$, which can be extended to a cycle of length at most $\ell$. Using \Cref{lem:long} below, we show that there is a cycle $C$ that either extends $P_0$ or contains many non-intersecting chords of length $2$, which is not much longer than $\ell$, such that either $G - V(C)$ is $P_5$-free, or $|C| \ge \ell$.
	If $|C| < \ell$, then $G - V(C)$ is $P_5$-free, and we use \Cref{lem:length-3-remainder} to find a cycle of length at most $\ell$. Otherwise, we use two lemmas from \cite{draganic2022pancyclicity,draganic2023chvatal} (\Cref{lem:shortening-min-deg,cor:shortening-indep} below) to shorten $C$ gradually to have length slightly more than $\ell$, so that it can further be shortened to have length exactly $\ell$ using the short chords.

	\subsection{Lemmas}
		In this subsection we state the lemmas we will use in the proof of \Cref{lem:upper-range}.
		Most of the work towards proving \Cref{lem:upper-range} will go into the proof of the following lemma. 

		\begin{lemma} \label{lem:long}
			Let $0 < \delta \ll 1$, and let $n, \alpha, \ell \ge 1$ be integers such that $n$ is large.

			Let $G$ be a graph on $n$ vertices with $\alpha(G) = \alpha$ and $\kappa(G) > \alpha$.
			Let $P_0$ be a path of length at most $\delta \alpha$, and suppose that there is a cycle of length at most $\ell$ that contains $P_0$.
			Then there is a cycle $C$ satisfying the following three properties.
			\begin{enumerate}[label = \rm(\arabic*)]
				\item \label{itm:long-jump}
					$|C| \le \ell + \frac{n}{\delta \alpha}$.
				\item \label{itm:long-length-4}
					Either $G - V(C)$ has no paths of length $4$, or $|C| \ge \ell$. 
				\item \label{itm:long-chords}
					Either $P_0 \subseteq C$, or $C$ has at least $\delta \alpha$ pairwise non-intersecting chords of length $2$.
			\end{enumerate}
		\end{lemma}
		 
		Given a cycle as guaranteed by the last lemma, if its length is less than $\ell$, then we can apply the following lemma to find a cycle of length exactly $\ell$.

		\begin{lemma} \label{lem:length-3-remainder}
			Let $\ell, n \ge 1$ be integers satisfying $3 \le \ell \le n$. 
			Let $G$ be a graph on $n$ vertices, satisfying $\kappa(G) > \alpha(G)$. Suppose that $C_0$ is a cycle in $G$ satisfying: $|C_0| < \ell$; there are at least 18 pairwise non-intersecting chords of length $2$ or $3$ in $C_0$; and $G - V(C_0)$ is $P_5$-free.
			Then $G$ has a cycle of length exactly $\ell$.
		\end{lemma}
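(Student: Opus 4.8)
The plan is to use the abundance of vertices outside $C_0$ together with the $P_5$-freeness of $G - V(C_0)$ to find a single vertex (or a very short path) outside $C_0$ that can be ``inserted'' into $C_0$ so as to lengthen the cycle by a controlled small amount, and then repeat. Concretely, since $\kappa(G) > \alpha(G)$, the graph is highly connected, so there are many edges between $V(C_0)$ and $R := V(G) - V(C_0)$; and since $G[R]$ is $P_5$-free, its structure is very restricted (for instance, each connected component of $G[R]$ has bounded diameter, in fact diameter at most $3$, so $R$ cannot ``hide'' long detours among its own vertices). I would first show that if $|C_0| < \ell \le n$, then $R \ne \emptyset$ and moreover there is a vertex $w \in R$ with two neighbours $u, v$ on $C_0$ that are at distance $2$ or $3$ along $C_0$ --- this uses connectivity to get many $C_0$--$R$ edges and the $P_5$-freeness to prevent all $C_0$-neighbours of a fixed component of $R$ from being spread far apart. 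Replacing the short arc $C_0{}_{u \to v}$ (of length $2$ or $3$) by the path $u w v$ then changes $|C_0|$ by $0$ or $+1$; combined with a length-$2$ chord it changes it by $+1$ or $-1$; more generally, playing $w$ (or a short $R$-path) against the short arcs and against the $18$ non-intersecting chords of length $2$ or $3$, I can realize cycle-length increments in $\{-1, 0, +1\}$. Iterating, I can grow the current cycle length one step at a time until it reaches exactly $\ell$.

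The key steps, in order, would be: (i) record basic structural facts about $P_5$-free graphs $G[R]$ (bounded component diameter; if $R$ is large, a large independent set or a dominating edge/vertex inside each component), and note $|R| = n - |C_0| > n - \ell \ge 0$, so if $\ell > |C_0|$ then $R \ne \emptyset$; (ii) use $\kappa(G) > \alpha(G)$ to argue that as long as the current cycle $C$ satisfies $|C| < \ell$, there is a vertex $w \notin V(C)$ adjacent to two vertices of $C$ at $C$-distance $\le 3$ --- here the point is that the neighbourhood on $C$ of a component of $G - V(C)$, if it avoided all pairs at distance $\le 3$, would let us build either a large independent set (contradicting $\alpha$) or a long path in $G - V(C)$ (contradicting $P_5$-freeness, after transferring the structure across the iteration); (iii) show that such a $w$, possibly together with one of the $\le 18$ short non-intersecting chords of $C_0$ that is still ``intact,'' yields a new cycle $C'$ with $V(C') \supseteq V(C) \cup \{w\}$ (or with one chord used) and $|C'| \in \{|C|-1, |C|, |C|+1\}$, and in particular we can always increase the length by exactly $1$ unless we are already at a cycle that uses all of $V(C_0)$ and whose every short chord has been consumed; (iv) bound the number of iterations so that the pool of $18$ chords and the vertices of $R$ suffice to carry us from $|C_0|$ up to $\ell$ --- if $\ell - |C_0|$ is large this is automatic because each inserted $R$-vertex gives a $+1$ step, and the $18$ chords are only needed for fine adjustment near the end or when $R$ momentarily fails to provide a close pair.

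The main obstacle I expect is step~(ii)/(iii): guaranteeing, at \emph{every} stage of the iteration, a vertex outside the current cycle with two neighbours at $C$-distance at most $3$ (so that the insertion changes the length by at most $1$), rather than merely \emph{some} chord or \emph{some} $C$--$R$ edge. A single $C$--$R$ edge is useless for controlled lengthening; what is needed is a ``close'' pair, and the $P_5$-freeness of $G - V(C_0)$ must be leveraged carefully --- one has to make sure the $P_5$-free structure is not destroyed as the cycle absorbs vertices of $R$ (e.g. by always absorbing whole components of $G[R]$, or by re-deriving the needed local statement about the \emph{current} remainder). Handling the boundary case, where the cycle already spans $V(C_0)$ but $\ell$ is still not reached, requires that $R$ was large enough in the first place, i.e. $n - |C_0|$ vertices must be at least $\ell - |C_0|$, which holds since $\ell \le n$; so every target length in $[|C_0|+1, n]$ is in principle reachable, and the content is purely in making each $+1$ step legal. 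Once the ``close pair exists'' lemma is in hand, the rest is a routine induction on the current cycle length.
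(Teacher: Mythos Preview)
Your plan has a genuine gap at step~(ii), which you yourself flag as the main obstacle but do not resolve. The assertion that some $w \notin V(C)$ has two neighbours on $C$ at $C$-distance at most $3$ is not justified: a vertex $w$ outside $C$ may have all of its $>\alpha$ neighbours on $C$ spread far apart along $C$, and the $P_5$-freeness of $G - V(C)$ says nothing about how neighbours of $w$ are distributed \emph{on} $C$. Your suggested dichotomy (``either a large independent set, or a long path in the remainder'') does not go through, because far-apart attachment points produce neither; they are simply far-apart vertices of $C$, not an independent set (their predecessors need not be independent in the way you need), and no path in the remainder is created.

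The paper circumvents the ``close pair'' issue entirely via a rotation argument: given any $u$ outside $C$ (and a forbidden vertex $v$), one takes $\alpha$ internally disjoint $u$--$C$ paths in $G - \{v\}$, looks at the set $\{w_1^-,\ldots,w_\alpha^-,u\}$ of predecessors on $C$ together with $u$, and finds an edge $w_i^- w_j^-$ there; rerouting gives a cycle $C'$ with $V(C) \subsetneq V(C')$, where $V(C') \setminus V(C)$ is a path in the component of $u$ that contains $u$ and avoids $v$, and $C'$ misses at most two edges of $C$. This works regardless of how the attachments sit on $C$. The second idea you are missing is how to make each step exactly $+1$: rather than seeking close pairs, the paper first spends at most three such extensions (each costing at most four edges of $C_0$ and at most four new vertices) to arrange that enough of the remainder lies in components that are \emph{trees}; then a leaf $u$ of such a tree, with $v$ its unique tree-neighbour, forces the absorbed path to be $\{u\}$ alone, giving a clean $+1$. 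The $18$ chords are used only to correct a possible overshoot of up to $4$ at the end, since each extension destroys at most a bounded number of them.
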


		Otherwise, we apply one of the following results, due to Draganic--Munh\'a-Correia--Sudakov \cite{draganic2022pancyclicity,draganic2023chvatal}, allowing us to shorten a given path by a relatively small amount. 

		\begin{lemma}[Lemma 2.9 in \cite{draganic2023chvatal} (a Consequence of Proposition 2.9 in \cite{draganic2022pancyclicity})] \label{cor:shortening-indep}
			Let $G$ be an $n$-vertex graph with independence number at most $\alpha$, and let $P$ be a path in $G$ with ends $x,y$, satisfying $|P| > 4\alpha$.
			Then there is an $xy$-path $P'$ such that $|P| - \ceil{\frac{20\alpha^2}{|P|}} \le |P'| < |P|$.
		\end{lemma}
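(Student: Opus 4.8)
The plan is to deduce this from Proposition~2.9 of \cite{draganic2022pancyclicity}, of which it is essentially a reformulation; I outline below the argument I would give if reconstructing it. Write $m = |P|$ and $k = \ceil{20\alpha^2/m}$. Since $m > 4\alpha$ one has $k \le 5\alpha$, and it is enough to produce an $xy$-path $P'$ with $V(P') \subseteq V(P)$ and $|P| - k \le |P'| < |P|$.

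The starting observation is that $P = v_1 \ldots v_m$ always has a chord of \emph{moderate} length: any $2\alpha+1$ consecutive vertices span a chord of length at most $2\alpha$, since otherwise they would induce $P_{2\alpha+1}$, whose independence number $\alpha+1$ exceeds $\alpha(G)$. Substituting such a chord for the corresponding subpath shortens $P$ by an amount in $[1, 2\alpha-1]$, which settles the case $m \le 10\alpha$ (there $k \ge 2\alpha$). For $m > 10\alpha$, where $k$ can be far below $2\alpha$, one needs a finer, rotation-type operation, since all chords of $P$ may have length $\sim 2\alpha$ and combining them shortens only by multiples of that. The key move I would use: if there is an index $i$ and chords $v_{i-1}v_j$, $v_{i+1}v_{j+1}$ with $j \ge i+2$, then
\[
	P' \;=\; v_1 \ldots v_{i-1}\, v_j v_{j-1} \ldots v_{i+1}\, v_{j+1} v_{j+2} \ldots v_m
\]
is an $xy$-path omitting only $v_i$, so $|P'| = m-1$; a small bundle of near-parallel chords yields, similarly, $xy$-paths shorter by any prescribed amount up to $k$.

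To locate either a short chord (length in $[2,k+1]$, a direct shortening) or such a bundle, I would split $P$ into $\lfloor m/(2\alpha+1)\rfloor$ disjoint blocks of $2\alpha+1$ consecutive vertices, each contributing a chord of length at most $2\alpha$ as above, and, when none of these (nor a shorter chord exposed by a Tur\'an-type edge count on $G[V(P)]$, which has $\Omega(m^2/\alpha)$ edges) already gives a shortening by at most $k$, exploit the resulting abundance of long chords to produce a vertex with many chord-neighbours spanning a short interval, hence a near-parallel pair for the shortcut. I expect the genuine difficulty to be not the existence of \emph{some} valid shortening but controlling its size so that it lands in $[1,k]$, uniformly for all $m > 4\alpha$; this is precisely the technical content of \cite[Proposition~2.9]{draganic2022pancyclicity}, which I would invoke, the lemma following at once on applying it to $P$.
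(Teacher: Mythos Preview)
The paper does not prove this lemma at all: it is stated with the attribution ``Lemma 2.9 in \cite{draganic2023chvatal} (a Consequence of Proposition 2.9 in \cite{draganic2022pancyclicity})'' and then used as a black box in the proof of \Cref{lem:upper-range}. Your proposal does the same thing---ultimately you invoke \cite[Proposition~2.9]{draganic2022pancyclicity}---so in that sense your approach matches the paper exactly.

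The extra sketch you give of the underlying argument is broadly in the right spirit (the observation that any $2\alpha+1$ consecutive vertices must span a chord, and the ``crossing chords'' rotation that drops a single vertex, are both genuine ingredients), but as written it is not a proof. The step ``a small bundle of near-parallel chords yields, similarly, $xy$-paths shorter by any prescribed amount up to $k$'' is doing all the work in the hard regime $m > 10\alpha$ and is not justified: you have not explained how to guarantee such a bundle exists, and the vague appeal to ``abundance of long chords'' producing ``a vertex with many chord-neighbours spanning a short interval'' does not obviously yield the parallel-chord configuration you need. Since you explicitly fall back on citing the external proposition anyway, this is not a defect for the purposes of the present paper---but if you intend the sketch to stand on its own, that gap would need to be filled.
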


		\begin{lemma}[Lemma 2.8 in \cite{draganic2023chvatal}] \label{lem:shortening-min-deg}
			Let $G$ be an $n$-vertex graph with minimum degree $\delta$, and let $P$ be a path with ends $x,y$ and length at least $20n/\delta$. Then there is an $xy$-path $P'$ satisfying $|P| - 20n/\delta \le |P'| < |P|$.
		\end{lemma}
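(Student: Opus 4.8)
Write $P = v_0 v_1 \cdots v_m$ with $v_0 = x$ and $v_m = y$, so that $m = \ell(P) \ge 20n/\delta =: t$; since a path in $G$ has at most $n$ vertices, $20n/\delta \le n - 1$ and hence $\delta > 20$. There are two elementary ways to shorten $P$ while keeping its ends: a \emph{cut}, which replaces a subpath $v_i \cdots v_j$ by an edge $v_i v_j \in E(G)$ and deletes $j - i - 1$ vertices; and a \emph{detour}, which replaces $v_i \cdots v_j$ by $v_i w v_j$ for some $w \notin V(P)$ with $w v_i, w v_j \in E(G)$ and deletes $j - i - 2$ vertices. It suffices to produce one such move deleting between $1$ and $t$ vertices, so I would assume for contradiction that none exists. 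This gives two structural facts: every $v_i$ has each of its $P$-neighbours at $P$-distance $1$ or at $P$-distance at least $t + 2$; and every $w \notin V(P)$ has any two of its $P$-neighbours at $P$-distance at most $2$ or at least $t + 3$.

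First I would examine the prefix $W := \{v_0, \dots, v_{K-1}\}$, where $K := \floor{t}$ (note $K \le m$ and $K \ge 20$). By the first structural fact $W$ induces a path in $G$, so every $v_i \in W$ has at least $\delta - 2$ neighbours in $V(G) \setminus W$; hence there are at least $(\delta - 2)(K - 2) \ge 15n$ edges between $W$ and $V(G) \setminus W$, a set of size $n - K < n$. By averaging, some vertex $w$ has at least $15$ neighbours in $W$.

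If $w \notin V(P)$, let $v_a$ and $v_b$ be the neighbours of $w$ in $W$ of smallest and largest index, so $3 \le b - a \le K - 1 < t$; then the detour replacing $v_a \cdots v_b$ by $v_a w v_b$ deletes $b - a - 2 \in [1, t]$ vertices, a contradiction. So $w = v_q$ for some index $q$. The $\ge 15$ chords from $v_q$ to $W$ all have length at least $2$ (the only $W$-vertex that could be at $P$-distance $1$ from an outside vertex $v_q$ is $v_{K-1}$, and then $v_q = v_K$ has no further neighbour in $W$), so by the first structural fact $q \ge i + t + 2$ for every $v_i \in W$ adjacent to $v_q$; in particular $q \ge t + 2$, and every cut along a chord $v_i v_q$ deletes $q - i - 1 \ge t + 1$ vertices, overshooting. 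Re-inserting part of a deleted segment, or chaining two long chords, is of no help either, since every length-reducing jump along $P$ skips more than $t$ vertices. This case — a high-degree common neighbour lying far along $P$ — is the step I expect to be the main obstacle.

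To handle it I would run a rotation argument off the many chords joining $v_q$ to $W$: one such chord $v_i v_q$ lets the initial segment $v_0 \cdots v_q$ be re-traversed as a path on the same vertex set ending at a low-index vertex $v_s$, after which the task reduces to routing $v_s$ to $y$ through the tail $v_{q+1} \cdots v_m$ together with the off-path vertices — to which the same window-and-averaging step applies. Since each round consumes a block of more than $t$ consecutive vertices of $P$, at most $\delta / 20$ rounds occur before a usable move appears. The delicate point, and where I expect the bulk of the work to lie, is the quantitative bookkeeping: choosing, round by round, which chord and which continuation to use so that the final modification deletes at least one and at most $t$ vertices.
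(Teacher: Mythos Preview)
The paper does not prove this lemma; it is quoted as Lemma~2.8 of \cite{draganic2023chvatal} and used as a black box, so there is no in-paper argument to compare your proposal against.

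On the merits of your sketch: the reduction to the two structural facts is correct, the edge count $(\delta-2)(K-2)\ge 15n$ is fine, and the case $w\notin V(P)$ is handled cleanly. The gap is precisely where you locate it, and I do not believe the rotation idea closes it. After one rotation along the chord $v_iv_q$ you hold a path on $\{v_0,\dots,v_q\}$ ending at $v_{i+1}$, and to finish with a length drop in $[1,t]$ you need an edge (or a length-$2$ detour through an off-path vertex) from $v_{i+1}$ into the window $\{v_{q+2},\dots,v_{q+t+1}\}$. Nothing in your hypotheses forces this: the structural fact only tells you that the far $P$-neighbours of $v_{i+1}$ have index at least $i+t+3$, and since $i\le q-t-2$ this lower bound is at most $q+1$, so all of those neighbours could land in $\{v_{i+t+3},\dots,v_q\}$ (already used) or beyond $v_{q+t+1}$ (overshooting). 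Off-path neighbours of $v_{i+1}$ have the same defect. Iterating the window-and-averaging step on the tail does not help either: each round may again produce an on-path pivot with many long chords back into the current window, and the observation that at most $\delta/20$ rounds fit before you run off the end of $P$ tells you the process halts, not that it halts with a usable move. What you call ``quantitative bookkeeping'' is in fact the missing structural idea; the argument as outlined does not yet produce a shortening in the on-path case.
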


	\subsection{Proof of upper range lemma}

		We now prove the upper range lemma, using the results mentioned in the previous subsection.

		\begin{proof}[Proof of \Cref{lem:upper-range}]
			Let $\eta$ be a constant satisfying $\delta \ll \eta \ll 1$.
			Suppose that $\ell$ satisfies $\frac{n}{\delta \alpha} \le \ell \le n$ and write $k = \min\{\eta \alpha, \ell/6\}$.
			By \Cref{lem:path-chords}, there is a path $P_0$ of length at most $3k$ which has $k$ non-intersecting chords of length $2$ or $3$, with at least one of length $2$. Let $C_0$ be a shortest cycle that extends $P_0$. 

			We claim that $|C_0| \le \ell$.
			Indeed, denote by $x,y$ the ends of $P_0$, and consider the graph $G_0$, obtained from $G$ by removing the interior vertices of $P_0$. Then $\kappa(G_0) \ge \kappa(G) - 3k \ge \alpha/2$. Thus, there are $\alpha/2$ paths from $x$ to $y$ in $G_0$ with non-intersecting interiors. Let $Q_0$ be a shortest $xy$-path; then $|Q_0| \le n/(\alpha/2) = 2n/\alpha \le \ell/2$, implying that $|C_0| \le \ell$, as claimed.

			Let $C_1$ be a cycle as guaranteed by \Cref{lem:long}, applied with $P_0$ and $C_0$ and $\delta_{\ref{lem:long}} = \eta$. If $|C_1| < \ell$, then by \ref{itm:long-length-4}, the graph $G - V(C_1)$ is $P_5$-free. Additionally, by \ref{itm:long-chords}, by choice of $P_0$, and by $\alpha$ being large, $C_1$ has $18$ non-intersecting chords of length $2$ or $3$. \Cref{lem:length-3-remainder} thus yields a cycle of length exactly $\ell$, as required. So, we may assume that $|C_1| \ge \ell$.

			Suppose now that $P_0 \subseteq C_1$, let $P_1$ be the subpath of $C_1$ obtained by removing the edges of $P_0$, denote its ends by $x,y$, and let $G_1$ be the graph obtained from $G$ by removing the interior vertices of $P_0$. Then $\delta(G_1) \ge \alpha/2$ and $\alpha(G_1) \le \alpha$. Write $\ell' = \ell - \ell(P_0)$. Let $P_2$ be the shortest $xy$-path of length at least $\ell'$ in $G_1$.
			\begin{claim}
				$\ell(P_2) \le \ell' + k$.
			\end{claim}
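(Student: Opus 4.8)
The plan is to use \Cref{lem:shortening-min-deg} to shorten a very long $xy$-path in $G_1$ down to length at most $\ell' + k$ in controlled steps, using the fact that $G_1$ has large minimum degree. First I would observe that $P_1$ itself is an $xy$-path in $G_1$ of length $\ell(P_1) = |C_1| - \ell(P_0) \ge \ell - \ell(P_0) = \ell'$, so the set of $xy$-paths of length at least $\ell'$ is nonempty, and $P_2$ (the shortest such) is well-defined. The goal is to bound $\ell(P_2)$ from above.

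The key step is a shortening argument. Suppose for contradiction that $\ell(P_2) > \ell' + k$. Since $\delta(G_1) \ge \alpha/2$ and $G_1$ has at most $n$ vertices, and since $k = \min\{\eta\alpha, \ell/6\}$ with $\delta \ll \eta$, the quantity $20n/\delta(G_1) \le 40n/\alpha$ is small compared to $k$ — indeed $40 n/\alpha \le 40 \delta \ell \le \delta \ell \cdot 40 \ll \eta\alpha$ need not hold directly, so I would instead argue via $n/\alpha \le \delta \ell$ (from $\ell \ge n/\delta\alpha$), giving $20n/\delta(G_1) \le 40\delta\ell$, which is less than $k$ provided $k \ge 40\delta\ell$; when $k = \ell/6$ this is immediate for small $\delta$, and when $k = \eta\alpha$ one uses $40\delta\ell \le 40\delta n/\alpha \cdot (\ell\alpha/n) $— more cleanly, since $\ell \le n \le 4\alpha^2$ and $\ell \ge n/\delta\alpha$, we get $n/\alpha \le \delta\ell \le \delta n$, and $20n/\delta(G_1) \le 40 n/\alpha \le 40\delta\ell$; combined with $\ell \le 6k/\eta \cdot \mathbf{1}_{k=\eta\alpha}$ or $\ell = 6k$ in the other case, in both cases $40\delta\ell \ll k$ for $\delta \ll \eta$. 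Granting $20n/\delta(G_1) < k$: since $\ell(P_2) > \ell' + k > 20n/\delta(G_1)$ (using $\ell' \ge 0$ and that $\ell'$ itself is large), \Cref{lem:shortening-min-deg} applies to $P_2$ to give an $xy$-path $P'$ with $\ell(P_2) - 20n/\delta(G_1) \le \ell(P') < \ell(P_2)$. Then $\ell(P') > \ell(P_2) - k > \ell'$, so $P'$ is an $xy$-path of length at least $\ell'$ that is strictly shorter than $P_2$, contradicting the minimality of $P_2$.

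Hence $\ell(P_2) \le \ell' + k$, as claimed. The main obstacle I anticipate is the bookkeeping needed to verify that the shortening step $20n/\delta(G_1)$ is genuinely smaller than $k$ across both cases $k = \eta\alpha$ and $k = \ell/6$; this requires combining the hypotheses $\ell \ge n/\delta\alpha$, $\alpha \le n \le 4\alpha^2$, $\delta(G_1) \ge \alpha/2$, and $\delta \ll \eta \ll 1$ in the right order, but is otherwise routine. A secondary small point is confirming $\ell(P_2) > 20n/\delta(G_1)$ so that \Cref{lem:shortening-min-deg} is applicable at all, which follows since $\ell' = \ell - \ell(P_0) \ge n/\delta\alpha - \delta\alpha$ is large relative to $40n/\alpha$ for $\delta$ small.
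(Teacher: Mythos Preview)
Your approach via \Cref{lem:shortening-min-deg} alone has a real gap: the key inequality $40n/\alpha < k$ that you need is simply false in part of the parameter range. Specifically, when $k = \eta\alpha$ (which happens whenever $\eta\alpha \le \ell/6$, i.e.\ for all but very small $\ell$), you would need $40n/\alpha < \eta\alpha$, equivalently $n < \eta\alpha^2/40$. But the hypotheses only give $n \le 4\alpha^2$, so when $n$ is of order $\alpha^2$ the step size $40n/\alpha$ is of order $\alpha$, vastly larger than $k = \eta\alpha$. Your attempted justification ``$\ell \le 6k/\eta$ when $k = \eta\alpha$'' is backwards: the condition $k = \eta\alpha \le \ell/6$ gives a \emph{lower} bound $\ell \ge 6\eta\alpha$, not an upper bound, so there is no way to control $40\delta\ell$ by $k$ in this regime.

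The paper handles exactly this by splitting into two cases. If $k \ge 40n/\alpha$, your argument via \Cref{lem:shortening-min-deg} goes through. If instead $k < 40n/\alpha$, then necessarily $k = \eta\alpha$ (since $\ell/6 \ge n/(6\delta\alpha) > 40n/\alpha$), and one deduces $\alpha^2 \le 40n/\eta$, hence $\ell \ge n/(\delta\alpha) \ge \eta\alpha/(40\delta) \ge 10\alpha$. This makes $|P_2| \ge \ell/2 > 4\alpha$, so the \emph{other} shortening lemma, \Cref{cor:shortening-indep}, applies and gives a step of size at most $\lceil 20\alpha^2/|P_2|\rceil \le 40\alpha^2/\ell + 1 \le 1600\delta\alpha/\eta + 1 \le \eta\alpha = k$, yielding the same contradiction. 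So the missing ingredient in your plan is precisely this second case and the use of \Cref{cor:shortening-indep}.
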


			\begin{proof}
				Suppose that $\ell(P_2) > \ell' + k$. 
				By \Cref{lem:shortening-min-deg}, there is an $xy$-path $P_2'$ in $G_1$ that satisfies $\ell(P_2) - r \le \ell(P_2') < \ell(P_2)$, where $r = \frac{20|G_1|}{\delta(G_1)} \le \frac{40n}{\alpha}$. If $k \ge \frac{40n}{\alpha}$, then $\ell(P_2') \ge \ell' + k - r \ge \ell'$, contradicting the minimality of $P_2$. 

				We thus have $k \le \frac{40n}{\alpha}$. In particular, $k = \eta \alpha$ (because $\ell/6 \ge \frac{40n}{\alpha}$), $\alpha^2 \le \frac{40n}{\eta}$, and $\frac{\ell}{\alpha} \ge \frac{n}{\delta \alpha^2} \ge \frac{\eta}{40\delta} \ge 10$ (by $\delta \ll \eta$).
				Note that $\ell(P_2) \ge \ell - \ell(P_0) \ge \ell/2 \ge 5\alpha$ (using the last inequality). Hence, by \Cref{cor:shortening-indep}, there exists an $xy$-path $P_2'$ in $G_1$ satisfying $\ell(P_2) - r \le \ell(P_2') < \ell(P_2)$, where $r = \ceil{\frac{20\alpha^2}{|P_2|}} \le \frac{20\alpha^2}{\ell/2} + 1 = \frac{40\alpha^2}{\ell} + 1 \le \frac{1600\delta \alpha}{\eta} + 1 \le \eta \alpha = k$ (using $\frac{\ell}{\alpha} \ge \frac{\eta}{40\delta}$, $\delta \ll \eta$, and $\alpha$ being large). Thus $\ell(P_2') \ge \ell(P_2) - r \ge \ell'$, again contradicting the minimality of $P_2$.
			\end{proof}

			Recall that $P_0$ has $k$ non-intersecting chords of length $2$ or $3$, with at least one of length $2$. Then, by \Cref{prop:chords}, there is an $xy$-path $P_0'$, with $V(P_0') \subseteq V(P_0)$ and $\ell(P_0') = \ell - \ell(P_2)$ (indeed, we have $\ell(P_0) - k \le \ell - \ell(P_2) \le \ell(P_0)$ by the above claim and choice of $P_2$). The concatenation of $P_0'$ and $P_2$ is a cycle of length $\ell$, as required.

			It remains to consider the case $P_0 \not\subseteq C_1$. Then $C_1$ has at least $\eta\alpha$ non-intersecting chords of length $2$, and $|C_1| \le \ell + \frac{n}{\eta \alpha}$.

			Suppose first that $n \le \eta^2\alpha^2$. Write $t = |C_1| - \ell$; then $t \le \eta\alpha$ using the last condition above. Pick $t$ non-intersecting chords of length $2$ in $C_1$, and, for each of these chords $e$, replace the subpath of length $2$ of $C_1$ between the vertices of $e$ by $e$ itself. This yields a cycle of length exactly $\ell$.

			Now consider the case $n \ge \eta^2 \alpha^2$. Let $P_1$ be a subpath of $C_1$ of length $\ell/2$ with a maximum number of non-intersecting chords of length $2$, and let $P_2$ be the subpath of $C$ obtained by removing the edges of $P_1$. 
			Noting that $|C| \le \ell + \frac{n}{\eta \alpha} \le 2\ell$, we conclude that $P_1$ has at least $\eta\alpha/4$ non-intersecting chords of length $2$.
			Denote the ends of $P_2$ by $x,y$, and let $P_3$ be a shortest $xy$-path in $G[V(P_2)]$ of length at least $\ell/2$. We claim that $\ell(P_3) \le \ell/2 + \eta \alpha/2$.
			If not, then by \Cref{cor:shortening-indep} there is an $xy$-path $P_3'$ in $G[V(P_3)]$ with $\ell(P_3) - r \le \ell(P_3') < \ell(P_3)$, where $r = \ceil{\frac{20\alpha^2}{\ell}} \le \frac{20n}{\eta^2 \ell} + 1 \le \frac{20\delta \alpha}{\eta^2} + 1 \le \eta\alpha/4$ (using $\ell \ge n/\delta \alpha$, $\eta \gg \delta$, and $\alpha$ being large), a contradiction to the minimality of $P_3$.
			To finish, note that there is an $xy$-path $P_1'$ in $G[V(P_1)]$ of length exactly $\ell - \ell(P_3)$. Indeed, since $\ell(P_1) = \ell/2$, we have $\ell(P_1) - \eta \alpha/4 \le \ell - \ell(P_3) \le \ell(P_1)$, and $P_1$ has at least $\eta \alpha/4$ non-intersecting chords of length $2$, so we can use some of these chords to shorten $P_1$ as needed. Concatenating $P_1'$ and $P_3$ yields a cycle of the desired length.
		\end{proof}

	\subsection{Finding a cycle with many chords}
		In this subsection we prove \Cref{lem:long}, the main step towards \Cref{lem:upper-range}. 
		The proof proceeds roughly as follows. We take $C$ to be a longest cycle which satisfies \ref{itm:long-jump} and \ref{itm:long-chords}. Our task is to show that \ref{itm:long-length-4} also holds, so suppose not. This means that $|C| < \ell$ and there is a component $H$ in $G - V(C)$ that contains a path of length at least $4$. To reach a contradiction, we will show that there is a cycle $C'$ satisfying: $|C'| > |C|$; the subgraph $C' \setminus V(C)$ is a subpath $P'$ contained in $H$; and $C'$ contains either $P_0$ or many chords of length $2$. For this to be a contradiction, we want $|P'| \le \frac{n}{\delta \alpha}$, due to \ref{itm:long-jump}. This will be achieved in the first two claims, \Cref{claim:med-path,claim:diam}, the latter asserting that if any two vertices $x,y$ in $H$ can be joined by a path of length at least $4$ in $H$, then there is a path in $H$ joining $x$ and $y$ whose length is at most $\frac{n}{\delta \alpha}$. It is easy to conclude that $|C| > \alpha$ (see \Cref{claim:C-length}). Now, a standard rotation technique yields a cycle $C'$ such that: $|C'| > |C|$; $C'$ misses at most two edges of $C$; and the vertices not in $C'$ induce a subpath of $C'$. This implies the desired contradiction if $C$ has at least $\delta \alpha + 2$ chords of length $2$, so we may assume this is not the case (see \Cref{claim:chords}). A more intricate rotation argument (which was outlined in the introduction) implies that $H$ has no path of length at least $4$ whose ends have relatively small degree in $H$ (see \Cref{claim:A-edges}). 
		We then use structural arguments to find a relatively large subset $A \subseteq V(H)$ whose every two vertices can be joined by a path of length at least $4$ in $H[A]$, and whose almost every vertex has few neighbours in $H - A$. A similar rotation argument now yields the desired contradiction, using the properties of $A$.

		\begin{proof}[Proof of \Cref{lem:long}]
			Let $\eta$ be a constant satisfying $\delta \ll \eta \ll 1$.
			Let $C$ be a longest cycle satisfying \ref{itm:long-jump} and \ref{itm:long-chords}, with the additional technical condition that if $P_0 \not\subseteq C$ then $|C| > \alpha$ (which later allows us to assume that $|C| > \alpha$, a useful property for an argument based on rotations). By assumption, there is a cycle of length at most $\ell$ that extends $P_0$, showing that such a cycle $C$ exists. We will show that $C$ also satisfies \ref{itm:long-length-4}, which would prove the lemma. Suppose not, then $|C| < \ell$ and $G - V(C)$ contains a path of length $4$. Let $H$ be a component in $G - V(C)$ that contains such a path.

			Define $E$ as follows. If $P_0 \subseteq C$, set $E = E(P_0)$. Otherwise, let $E$ be a set of $2\delta \alpha$ edges in $C$ corresponding to $\delta \alpha$ pairwise non-intersecting chords of length $2$ in $C$.

			The next claim is in preparation for the claim after that, which says that any two vertices in $H$ which are joined by a path of length at least $4$ in $H$, are joined by such a path which is not too long. This will help us control the length by which the cycle $C$ is extended in various situations, allowing us to satisfy \ref{itm:long-jump}.

			\begin{claim} \label{claim:med-path}
				There is no path $P$ in $H$ with $\frac{n}{\eta^3\alpha} \le |P| \le \frac{n}{\delta\alpha}$ whose ends have degree at most $(\frac{1}{2} - \eta) \alpha$ in $H$.
			\end{claim}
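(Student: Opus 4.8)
The plan is to assume, for contradiction, that such a path $P$ exists, with ends $x,y$, and to turn it against the extremality of $C$: I will build a cycle $C'$ with $|C'|>|C|$ that still satisfies \ref{itm:long-jump} and \ref{itm:long-chords}, as well as the side condition that $|C'|>\alpha$ whenever $P_0\not\subseteq C'$. This contradicts the choice of $C$.

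First I would record that $x$ and $y$ have many neighbours on $C$. Since $H$ is a connected component of $G-V(C)$, every neighbour of $x$ outside $V(H)$ lies on $C$; as $\deg_G(x)\ge\kappa(G)>\alpha$ and $\deg_H(x)\le(\tfrac12-\eta)\alpha$, the set $A:=N_G(x)\cap V(C)$ has $|A|>(\tfrac12+\eta)\alpha$, and likewise $B:=N_G(y)\cap V(C)$ has $|B|>(\tfrac12+\eta)\alpha$. Two observations make $P$ usable. First, for any $u\in A$ and $v\in B$ with $u\neq v$, the path $u\,x\,P\,y\,v$ joins $u$ to $v$, has length $\ell(P)+2\ge 6$, and has its interior in $V(H)$, hence outside $C$; and for $u,u'\in A$ (or in $B$) the length-$2$ path $u\,x\,u'$ also has non-empty interior outside $C$. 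Second, any cycle obtained from $C$ by rerouting through a subpath of $P$ of length at most $\ell(P)$ while keeping the rest of $C$ has length at most $|C|+|P|\le(\ell-1)+\tfrac{n}{\delta\alpha}<\ell+\tfrac{n}{\delta\alpha}$ (using $|C|<\ell$), so it automatically satisfies \ref{itm:long-jump}. This is the first use of the bounds on $|P|$; the lower bound $|P|\ge\tfrac{n}{\eta^3\alpha}$ is what will give enough slack to delete a long enough arc of $C$ below.

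The core is a rotation–extension along $C$ exploiting these external connections. When $|C|$ is not much bigger than $\alpha$ a soft pigeonhole suffices: since $|A|+|\{v^-:v\in B\}|>(1+2\eta)\alpha$, if $|C|\le(1+\tfrac32\eta)\alpha$ then there is $u\in A$ with $u^+\in B$, and, as only $2\delta\alpha\ll\eta\alpha$ edges of $C$ lie in $E$, we may also insist $uu^+\notin E$; rerouting $C$ through $u\,x\,P\,y\,u^+$ and the long arc $C_{u^+\to u}$ yields $C'$ with $|C'|=\ell(P)+|C|+1>|C|$, and $C'$ keeps every edge of $C$ except $uu^+$, hence contains $P_0$ (resp.\ all $\delta\alpha$ chords), so \ref{itm:long-chords} holds. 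When $|C|$ is large I would invoke the new rotation argument from the introduction: extract a set $U\subseteq V(C)$ with $|U|>\alpha/2$, no two of whose vertices are close on $C$, and every two of whose vertices are joined by a path of length $\ge6$ with interior outside $C$ — built from $A$ and $B$ (and, if needed, from neighbours on $C$ of well-spread interior vertices of $P$, which exist because $P$ is long and, by Menger, many vertices of $H$ near $P$ send an edge to $C$). Then $\{u^-,u^{-3}:u\in U\}$ has more than $\alpha\ge\alpha(G)$ elements and spans an edge. If this edge has one of the forms $u^-v^-$, $u^-v^{-3}$ or $u^{-3}v^{-3}$ with $u\neq v$ joined by a length-$\ge6$ external path, the corresponding rotation produces a longer cycle discarding only $O(1)$ edges of $C$; choosing $U$ and thinning it so that these $O(1)$ edges avoid $E$ — again affordable since $|E|\le2\delta\alpha$ — preserves \ref{itm:long-chords}. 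The only remaining case is an edge $u^-u^{-3}$, a length-$2$ chord of $C$; here one uses this chord together with the $\delta\alpha$ chords already present (or with $P_0$) to reroute $C$ into a longer cycle with the required properties, as in the outline.

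The main obstacle is the bookkeeping around \ref{itm:long-chords}: every cycle we build must retain all of $E$, so each rotation has to be arranged to discard only edges of $C$ lying outside $E$, while the rotation set $U$ must stay large and well-separated. The awkward sub-case is when $A$ (or $B$) is tightly clustered in a short arc of $C$, which obstructs the "well-separated'' requirement and forces a separate, hands-on rotation–extension inside that arc. Controlling $|C'|$ from above is routine throughout, by the second observation above together with $|C|<\ell$ and $|P|\le\tfrac{n}{\delta\alpha}$.
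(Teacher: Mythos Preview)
Your outline has a genuine gap in the ``large $|C|$'' case: you never actually construct the set $U$ with the required properties, you leave the clustered sub-case unresolved, and the $u^-u^{-3}$ outcome (a single length-$2$ chord) does not by itself let you extend $C$ --- at this point in the paper \Cref{claim:chords} is not yet available, and one extra chord is not enough to reroute. So as written the large-$|C|$ branch is not a proof.

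More importantly, you have missed the key idea that makes the case split unnecessary. The paper argues as follows. Let $X$ (resp.\ $Y$) be the neighbours of $x$ (resp.\ $y$) on $C$ not incident with $E$; then $|X|,|Y|\ge(\tfrac12+\eta^2)\alpha$. Now show that $X^-\cup Y^-$ is \emph{independent}. If some $u^-v^-$ is an edge with $u,v\in X$, then the cycle $(uC_{u\to v^-}v^-u^-C_{u^-\nto v}vxu)$ uses all edges of $C$ and adds the single vertex $x$, giving $|C'|=|C|+1$ --- no need for $P$ at all; likewise if $u,v\in Y$. If $u\in Y$, $v\in X$, reroute through $vxPyu$ as you do. Thus $X^-\cup Y^-$ is independent, hence $|X^-\cup Y^-|\le\alpha$, forcing $|X\cap Y|\ge 2(\tfrac12+\eta^2)\alpha-\alpha\ge\eta^2\alpha$. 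Now among the $\ge\eta^2\alpha$ common neighbours, at least $\eta^3\alpha$ consecutive pairs bound an arc free of $E$, so some such arc has length at most $|C|/(\eta^3\alpha)\le n/(\eta^3\alpha)$; delete that arc and splice in $uyPxv$. The lower bound $|P|\ge n/(\eta^3\alpha)$ is exactly what makes $|C'|>|C|$, and since all of $E$ survives, \ref{itm:long-chords} holds. This avoids the well-separated set $U$, the $u^{-3}$ trick, and the clustered sub-case entirely.
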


			\begin{proof}
				Suppose that $P$ is such a path and denote its ends by $x, y$.
				Let $X$ be the set of neighbours of $x$ in $C$ that are not incident to edges of $E$; then $|X| \ge d_G(x) - d_H(x) - 2|E| \ge \alpha - (\frac{1}{2} - \eta)\alpha - 4\delta \alpha \ge (\frac{1}{2} + \eta^2)\alpha$. Define $Y$ similarly with respect to $y$, then similarly $|Y| \ge (\frac{1}{2} + \eta^2)\alpha$.

				Write $\Xm = \{\xm : x \in X\}$ and define $\Ym$ analogously for $y$.
				We claim that $\Xm \cup \Ym$ is independent.
				Indeed, suppose not, and pick $u,v \in X \cup Y$ such that $\um \vm$ is an edge.
				If $u, v \in X$, define 
				\begin{equation} \label{eqn:C-1}
					C' = (u C_{u \to \vm} \vm \um C_{\um \nto v} v x u).
				\end{equation}
				(See \Cref{fig:cycle1}, and note that this make sense also when $\um = v$.)
				\begin{figure}[h]
					\centering
					\begin{subfigure}[b]{.4\textwidth}
						\centering
						\includegraphics[scale = .7]{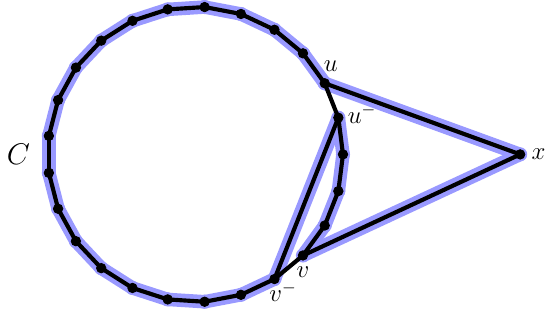}
						\caption{A cycle as in \eqref{eqn:C-1}}
						\label{fig:cycle1a}
					\end{subfigure}
					\hspace{.5cm}
					\begin{subfigure}[b]{.4\textwidth}
						\centering
						\includegraphics[scale = .7]{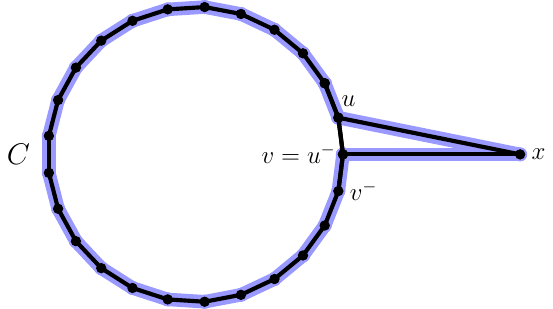}
						\caption{A cycle as in \eqref{eqn:C-1} when $\um = v$}
						\label{fig:cycle1b}
					\end{subfigure}
					\caption{Cycles as in \eqref{eqn:C-1}}
					\label{fig:cycle1}
				\end{figure}
				Moreover, $|C'| = |C| + 1 \le \ell$ and $E \subseteq C'$, so \ref{itm:long-jump} and \ref{itm:long-chords} both hold, a contradiction to the maximality of $C$. The same argument works if $u,v \in Y$, so suppose now that $v \in X$ and $u \in Y$, and define
				\begin{equation} \label{eqn:C-2}
					C' = (u C_{u \to \vm} \vm \um C_{\um \nto v} v x P y u).
				\end{equation}
				(See \Cref{fig:cycle2}, and note that this also works when $v = \um$.)
				\begin{figure}[h]
					\centering
					\includegraphics[scale = .7]{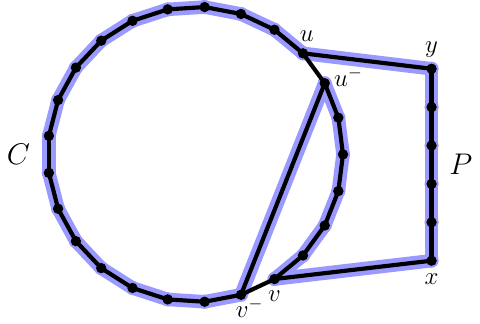}
					\vspace{-.5cm}
					\caption{A cycle as in \eqref{eqn:C-2}}
					\label{fig:cycle2}
				\end{figure}
				
				Then $|C'| \le |C| + |P| \le |C| + \frac{n}{\delta \alpha}$, contradicting the maximality of $C$. 

				Since $\Xm \cup \Ym$ is independent, it has size at most $\alpha$, showing that $|\Xm \cap \Ym| \ge \eta^2 \alpha$, and thus $|X \cap Y| \ge \eta^2 \alpha$. Consider the segments of $C$ between consecutive elements of $X \cap Y$. At most $2 \delta \alpha$ of them contain an edge of $E$, leaving at least $\eta^3 \alpha$ segments without any edges of $E$. Pick a segment $I$ of length at most $\frac{|C|}{\eta^3\alpha} \le \frac{n}{\eta^3\alpha}$, and denote its ends by $u$ and $v$. Define 
				\begin{equation} \label{eqn:C-3}
					C' = (vC_{v \nto u}uyPxv),
				\end{equation}
				(see \Cref{fig:cycle3}). Then $|C'| \ge |C| + |P| - (|I| - 2) \ge |C| + 1 > |C|$ (using $|P| \ge \frac{n}{\eta^3\alpha} \ge \ell(I) = |I| - 1$) and $|C'| \le |C| + |P| \le |C| + \frac{n}{\delta \alpha}$. This contradicts the maximality of $C$. 
				\begin{figure}[h]
					\centering
					\includegraphics[scale = .7]{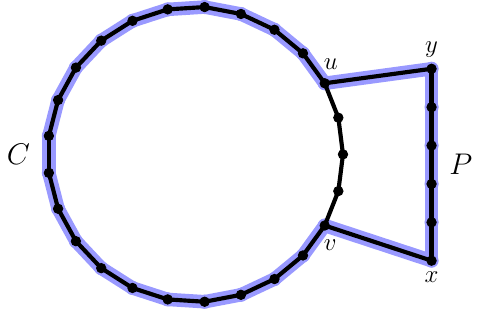}
					\caption{A cycle as in \eqref{eqn:C-3}}
					\label{fig:cycle3}
				\end{figure}
			\end{proof}

			\begin{claim} \label{claim:diam}
				If $x,y$ are vertices in $H$ that are joined by a path of length at least $4$, then there is such a path $P$ in $H$ with $|P| \le \frac{n}{\delta \alpha}$.
			\end{claim}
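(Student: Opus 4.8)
The plan is to let $P$ be a shortest $xy$-path of length at least $4$ in $H$, write $m = \ell(P)$ and $P = p_0 p_1 \cdots p_m$ with $p_0 = x$ and $p_m = y$, and prove that $|P| = m + 1 \le \frac{n}{\delta \alpha}$. If this inequality already holds we are done, so assume it fails; since $\alpha \le n$ we have $\frac{n}{\delta \alpha} \ge \frac{1}{\delta}$, which is a large constant, so $m$ is large (in particular $m \ge 6$) and $m \ge \frac{n}{2\delta \alpha}$. We will reach a contradiction.

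First I would record two consequences of the minimality of $P$. If $p_i p_j$ is a chord of $P$ (an edge of $H$ with $j \ge i + 2$) and $j - i \le m - 3$, then $p_0 \cdots p_i p_j \cdots p_m$ is an $xy$-path in $H$ of length $m - (j - i) + 1 \in [4, m - 1]$, contradicting the choice of $P$; hence every chord of $P$ joins $\{p_0, p_1, p_2\}$ to $\{p_{m-2}, p_{m-1}, p_m\}$. Likewise, if some $w \in V(H) \setminus V(P)$ is adjacent to both $p_i$ and $p_j$ with $3 \le j - i \le m - 2$, then $p_0 \cdots p_i w p_j \cdots p_m$ is an $xy$-path in $H$ of length $m - (j - i) + 2 \in [4, m - 1]$, again impossible. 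In particular the subpath $Q = P_{p_3 \to p_{m-3}}$ has no chords, so $H[V(Q)]$ is an induced path on $m - 5$ vertices and thus contains an independent set of size $\ceil{(m-5)/2}$; since $\alpha(H) \le \alpha$, this gives $m \le 2\alpha + 5$. Combined with $m \ge \frac{n}{2\delta \alpha}$, this forces $\alpha \ge \sqrt{n / (6\delta)}$, and in particular $\alpha$ is large.

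The core of the proof is a disjointness count over ``heavy'' interior vertices. Call an index $i$ with $3 \le i \le m - 3$ \emph{heavy} if $d_H(p_i) > \left(\frac{1}{2} - \eta\right)\alpha$, and set $t = \ceil{\frac{n}{\eta^3 \alpha}}$, so that $P_{p_i \to p_{i+t}}$ has order $t + 1 \in \left[\frac{n}{\eta^3 \alpha}, \frac{n}{\delta \alpha}\right]$ (using $\delta \ll \eta$ and $n / \alpha \ge 1$). For every interior $i$ with $i + t \le m - 3$, \Cref{claim:med-path} applied to $P_{p_i \to p_{i+t}}$ shows that $p_i$ or $p_{i+t}$ is heavy; hence the set of non-heavy interior indices contains no two elements at distance exactly $t$, and so (splitting $\{3, \dots, m-3\}$ into residue classes modulo $t$) has size at most $\frac{1}{2}(m - 5) + t$. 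As $t \ll m$, at least $m/3$ interior indices are heavy, and a greedy selection yields a set $D'$ of at least $m/9$ heavy interior indices that are pairwise at distance at least $3$ along $P$. For each $i \in D'$, since $p_i$ is interior its only neighbours on $P$ are $p_{i-1}$ and $p_{i+1}$, so $\left| N_H(p_i) \setminus V(P) \right| = d_H(p_i) - 2 \ge \alpha/3$ (using that $\alpha$ is large and $\eta \ll 1$); moreover, by the ``no common neighbour outside $P$'' property above — which applies since all pairwise distances within $D'$ lie in $[3, m - 2]$ — the sets $N_H(p_i) \setminus V(P)$ for $i \in D'$ are pairwise disjoint. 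Therefore $n \ge |V(H) \setminus V(P)| \ge |D'| \cdot \alpha/3 \ge m \alpha / 27$, whence $m \le 27 n / \alpha$. This contradicts $m \ge \frac{n}{2\delta \alpha} > 27 n / \alpha$, the final inequality holding because $\delta \ll 1$.

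The step I expect to cause the most trouble is the bookkeeping around the degenerate regimes — $m$ small, $\alpha$ small, and the exact admissible window for $t$ — and, above all, ensuring that every ``shortcut'' $xy$-path produced in the minimality arguments genuinely has length at least $4$; this is precisely why one restricts attention to the interior of $P$ and discards three vertices from each end rather than fewer. The quantitative heart — a long path in $H$ must contain many interior vertices of $H$-degree about $\alpha/2$, whose private neighbourhoods then overflow $V(H)$ — is robust, and the exact constants are unimportant.
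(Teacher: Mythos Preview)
Your proof is correct. Both your argument and the paper's combine the same two ingredients --- the minimality of $P$ (which makes neighbourhoods of well-separated interior vertices disjoint) and \Cref{claim:med-path} --- but you apply them in the opposite order. The paper first uses the disjoint-neighbourhood count to show that at most $|U|/2$ of the well-separated interior vertices can have $H$-degree exceeding $\alpha/4$; it then deduces that $P$ contains many \emph{low}-degree vertices, locates a subpath of the right length whose two ends are low-degree, and invokes \Cref{claim:med-path} once on that subpath for the contradiction. You instead invoke \Cref{claim:med-path} on every translate $P_{p_i \to p_{i+t}}$ to force at least $m/3$ interior vertices to be \emph{heavy}, and only then run the disjoint-neighbourhood count on a $3$-separated subfamily to overflow $n$. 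Your route is a little more direct (one pass instead of two), at the cost of applying \Cref{claim:med-path} many times rather than once; the paper's route needs the extra pigeonhole step of chopping $P$ into blocks of order $n/\delta\alpha$ and finding one rich in low-degree vertices. Either way the quantitative slack is ample. Your side remark deriving $m \le 2\alpha + 5$ and hence $\alpha \ge \sqrt{n/(6\delta)}$ is a nice bonus, making the ``$\alpha$ large'' assumption self-contained rather than inherited from the ambient lemma.
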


			\begin{proof}
				Let $x, y \in V(H)$ which can be joined by a path of length at least $4$ in $H$. Let $P$ be a shortest $xy$-path in $H$ of length at least $4$.
				We need to show $|P| \le \frac{n}{\delta \alpha}$. Suppose this is not the case. 

				Denote the set of first and last four vertices in $P$ by $X$, let $U$ be a maximal set of vertices in $P - X$, that are pairwise at distance at least $3$ from each other on $P$; so $|U| \ge \floor{\frac{|P|-8}{3}} \ge \frac{|P|}{4} \ge \frac{n}{5\delta \alpha}$. Let $W$ be the set of vertices in $U$ with degree at least $\alpha/4$ in $H$. Then the sets $N_H(w) - X$, with $w \in W$, are pairwise disjoint sets (due to the minimality of $P$) of size at least $\alpha/4 - 8 \ge \alpha / 5$. It follows that $|W| \le \frac{n}{\alpha / 5} \le |U|/2$.

				By the previous paragraph, at least $|U|/2 \ge |P|/8$ of the vertices in $P$ have degree at most $\alpha/4$ in $H$. 
				Since $P$ can be divided into at most $\frac{|P|}{n/2\delta \alpha}$ paths on at most $\frac{n}{\delta \alpha}$ vertices (by taking as many paths as possible of order exactly $\frac{n}{2\delta \alpha}$ and adding the remainder to the last path), there exists a subpath $Q$ of order at most $\frac{n}{\delta \alpha}$ that contains at least $\frac{|P|/8}{|P|/(n/2\delta\alpha)} = \frac{n}{16\delta\alpha}$ vertices of degree at most $\alpha/4$ in $H$. 
				Then $Q$ in turn contains a subpath of order between $\frac{n}{16\delta\alpha}$ and $\frac{n}{\delta\alpha}$ whose ends have degree at most $\alpha/4 \le (\frac{1}{2} - \eta)\alpha$ in $H$, contradicting \Cref{claim:med-path}.
			\end{proof}

			For technical reasons, it will be useful to know that $|C| > \alpha$. This is proved in the following claim.

			\begin{claim} \label{claim:C-length}
				$|C| > \alpha$.
			\end{claim}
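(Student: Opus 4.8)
The plan is to first dispose of the case $P_0 \not\subseteq C$, where $|C| > \alpha$ holds by the very choice of $C$, and then treat the case $P_0 \subseteq C$ by contradiction: I would assume $|C| \le \alpha$ and build a cycle $C'$ with $|C'| > |C|$ that still satisfies \ref{itm:long-jump} and \ref{itm:long-chords} and still contains $P_0$, contradicting the maximality of $C$.

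The key observation is that $|V(C)| = |C| \le \alpha < \kappa(G)$, so $V(C)$ is too small to be a cut set of $G$. Since $H$ — a component of $G - V(C)$ containing a path of length $4$ — is nonempty, it follows that $G - V(C)$ is connected, i.e.\ $G - V(C) = H$. Moreover every vertex of $C$ has a neighbour in $H$: otherwise the set $S \subseteq V(C)$ of those vertices with a neighbour in $H$ would be a nonempty proper separator of $H$ from $V(C) \setminus S$, forcing $\kappa(G) \le |S| \le |C| \le \alpha$, a contradiction.

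Next I would choose an edge $ab$ of $C$ with $ab \notin E(P_0)$; this is possible because $C_0$ satisfies the conditions defining $C$, so $|C| \ge |C_0| > \ell(P_0)$ and hence $P_0$ does not use every edge of $C$. Pick neighbours $a', b' \in V(H)$ of $a$ and $b$. If $a' = b'$, let $C'$ be obtained from $C$ by replacing the edge $ab$ with the path $a a' b$. Otherwise let $Q$ be a shortest $a'b'$-path in $H$; if $|Q| \le \frac{n}{\delta\alpha}$ keep it, and if $|Q| > \frac{n}{\delta\alpha}$ then $\ell(Q) > \tfrac{1}{\delta} - 1 \ge 4$ (as $\delta \ll 1$), so \Cref{claim:diam} supplies an $a'b'$-path $Q'$ in $H$ with $|Q'| \le \frac{n}{\delta\alpha}$; in either subcase replace the edge $ab$ of $C$ by the detour through this path. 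In all cases $C'$ is a cycle whose only vertices outside $V(C)$ lie in $H$, with $|C'| > |C|$ and $|C'| \le |C| + \frac{n}{\delta\alpha}$. Since $ab \notin E(P_0)$, the path $P_0$ is untouched, so $P_0 \subseteq C'$ and \ref{itm:long-chords} holds. And since we are in the case where \ref{itm:long-length-4} fails we have $|C| < \ell$, so $|C'| \le (\ell - 1) + \frac{n}{\delta\alpha} < \ell + \frac{n}{\delta\alpha}$ and \ref{itm:long-jump} holds. Thus $C'$ contradicts the maximality of $C$, and $|C| > \alpha$.

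The only step requiring a genuine input is bounding the length of the rerouting path through $H$ so as not to overshoot the bound in \ref{itm:long-jump}; this is precisely the purpose of \Cref{claim:diam}. Everything else — high connectivity forcing $G - V(C) = H$ and every vertex of $C$ seeing $H$, and choosing the removed edge outside $P_0$ — is routine.
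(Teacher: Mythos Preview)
Your proposal is correct and follows essentially the same approach as the paper: assume $|C|\le\alpha$ (forcing $P_0\subseteq C$ by the choice of $C$), use connectivity to see that every vertex of $C$ has a neighbour in $H$, pick an edge of $C$ outside $E(P_0)$, and reroute through a short path in $H$ bounded via \Cref{claim:diam} to obtain a longer admissible cycle. The paper's proof is terser (it just says ``by connectivity'' and cites \eqref{eqn:C-3}), but the argument is the same; your extra justification that $G-V(C)=H$ and your explicit case analysis for the detour length are sound elaborations of what the paper leaves implicit.
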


			\begin{proof}
				Suppose not. Then, by connectivity, every vertex in $C$ has a neighbour in $H$.
				Notice that $C$ has two consecutive vertices $u,v$ that are not the ends of the same edge in $E$; this is because, by $|C| \le \alpha$ we have $P_0 \subseteq C$ and so $E$ is the edge set of $P_0$.
				Let $x$ be a neighbour of $v$ in $H$, let $y$ be a neighbour of $u$ in $H$, and let $P$ be a shortest $xy$-path in $H$ (if $x = y$ then $P$ is a single vertex). Then $|P| \le \frac{n}{\delta\alpha}$ by \Cref{claim:diam}. A cycle as in \eqref{eqn:C-3} yields a contradiction to the maximality of $C$.
			\end{proof}

			\begin{claim} \label{claim:chords}
				There is no collection of at least $\delta \alpha + 2$ non-intersecting chords in $C$ of length $2$.
			\end{claim}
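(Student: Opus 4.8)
The plan is to argue by contradiction against the maximality of $C$. Suppose $C$ carries a collection $\mathcal{C}$ of $\delta\alpha+2$ pairwise non-intersecting chords of length $2$. I will exhibit a cycle $C'$ with $|C'|>|C|$ that still satisfies \ref{itm:long-jump} and \ref{itm:long-chords}; since then $|C'|>|C|>\alpha$ by \Cref{claim:C-length}, the auxiliary condition used in the choice of $C$ (that $|C|>\alpha$ when $P_0\not\subseteq C$) also holds for $C'$, contradicting that $C$ is a longest cycle with all these properties.

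The first step is to produce an edge to rotate along. Let $W$ be the set of vertices of $C$ having a neighbour in $H$, and note $|W|>\alpha$: if $W=V(C)$ this is \Cref{claim:C-length}, and otherwise $V(C)\setminus W\neq\emptyset$; since every neighbour of a vertex of $H$ lies in $V(H)\cup W$, the set $W$ separates $V(H)$ from $V(C)\setminus W$ in $G$, whence $|W|\ge\kappa(G)>\alpha$. Fix an orientation of $C$ and put $W^+=\{w^+:w\in W\}$. As $w\mapsto w^+$ is a bijection of $V(C)$ we have $|W^+|=|W|>\alpha=\alpha(G)$, so $W^+$ is not independent; pick distinct $a,b\in W$ with $a^+b^+\in E(G)$.

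The second step is to splice in a short path through $H$. Choose $x\in N_H(a)$ and $y\in N_H(b)$, and let $Q$ be a shortest $xy$-path in $H$ (the single vertex $x$ when $x=y$); if $\ell(Q)\ge 4$, replace $Q$ by the $xy$-path on at most $\frac{n}{\delta\alpha}$ vertices furnished by \Cref{claim:diam}, so that in every case $Q\subseteq H$ and $|Q|\le\frac{n}{\delta\alpha}$. A rotation-extension of exactly the type used in \eqref{eqn:C-1}--\eqref{eqn:C-3} then yields
\[
C'=\big(a^+\, C_{a^+\to b}\, b\, y\, Q\, x\, a\, C_{a\nto b^+}\, b^+\, a^+\big),
\]
whose vertex set is $V(C)\cup V(Q)$: the arcs $C_{a^+\to b}$ and $C_{a\nto b^+}$ partition $V(C)$ and $V(Q)\subseteq V(H)$ is disjoint from $V(C)$. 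Since $|C'|=|C|+|Q|$ with $|Q|\ge 1$ we get $|C'|>|C|$, and since $|C|\le\ell-1$ and $|Q|\le\frac{n}{\delta\alpha}$ we get $|C'|<\ell+\frac{n}{\delta\alpha}$, which is \ref{itm:long-jump}. For \ref{itm:long-chords}: the only edges of $C$ not traversed by $C'$ are $aa^+$ and $bb^+$, and within each of the two arcs the cyclic order of vertices of $C$ is preserved, so a chord of $\mathcal{C}$ remains a length-$2$ chord of $C'$ as soon as neither of its two spanned edges equals $aa^+$ or $bb^+$; since the chords of $\mathcal{C}$ are non-intersecting, their spanned edge-pairs are pairwise disjoint, so at most two chords of $\mathcal{C}$ are destroyed and at least $\delta\alpha$ survive, pairwise non-intersecting, in $C'$.

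I do not anticipate a serious obstacle; the core move is the standard Chvat\'al--Erd\H{o}s rotation applied to the non-maximal cycle $C$. The points that need care are the dichotomy $|W|>\alpha$ --- in particular the degenerate branch $W=V(C)$, which is precisely what \Cref{claim:C-length} was set up for --- and keeping $Q$, and hence the gain in length, small via \Cref{claim:diam} so that \ref{itm:long-jump} survives; the two-chord slack in the hypothesis ($\delta\alpha+2$ rather than $\delta\alpha$) is exactly what absorbs the two edges of $C$ broken by the rotation. One should also dispose of the harmless degenerate cases $b=a^+$ or $a=b^+$, in which one of the two arcs is a single vertex, just as the analogous degeneracies are handled around \eqref{eqn:C-1}.
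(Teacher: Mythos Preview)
Your argument is correct and follows essentially the same route as the paper: both find $>\alpha$ vertices on $C$ with neighbours in $H$, use an edge in the shifted set to perform the standard rotation of type \eqref{eqn:C-2}, bound the detour via \Cref{claim:diam}, and observe that at most two chords are destroyed. The only cosmetic differences are that you work with successors $W^+$ rather than predecessors, and you spell out the $|W|>\alpha$ dichotomy and the short-$Q$ case more explicitly than the paper does.
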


			\begin{proof}
				Suppose that $C$ has at least $\delta \alpha + 2$ non-intersecting chords of length $2$.
				Let $U$ be the set of vertices in $C$ with neighbours in $H$.
				Then, because $\kappa(G), |C| > \alpha$ (by assumption and \Cref{claim:C-length}), we have $|U| > \alpha$, implying that there are $u,v \in U$ such that $\um \vm$ is an edge. 
				Let $x,y$ be neighbours of $v, u$ in $H$, and let $P$ be a shortest path in $H$ between $u$ and $v$. Define $C'$ as in \eqref{eqn:C-2} (see \Cref{fig:cycle2}). Then $|C'| > |C| > \alpha$ and $|C'| \le |C| + |P| \le \ell + \frac{n}{\delta \alpha}$, by \Cref{claim:diam}. Moreover, since $C'$ misses at most two edges of $C$, it has at least $\delta \alpha$ non-intersecting chords of length $2$. This contradicts the maximality of $C$.
			\end{proof}

			The following claim contains the most crucial novelty in the proof.

			\begin{claim} \label{claim:length-4}
				There is no path of length at least $4$ in $H$ whose ends have degree at most $(\frac{1}{2} - \eta)\alpha$ in $H$.
			\end{claim}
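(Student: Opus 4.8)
The plan is to argue by contradiction, following \Cref{claim:med-path} but replacing the step there that reroutes $C$ through a long arc (which needs $P$ long) by the new rotation device sketched in the introduction, which handles short $P$. Suppose $H$ contains a path of length at least $4$ whose ends $x,y$ satisfy $d_H(x),d_H(y)\le(\tfrac12-\eta)\alpha$, and take such a path $P$ of minimum order. By \Cref{claim:diam} we have $|P|\le\tfrac{n}{\delta\alpha}$, and then \Cref{claim:med-path} forces $|P|<\tfrac{n}{\eta^3\alpha}$; note also $|P|\ge 5$. Let $X$ be the set of neighbours of $x$ on $C$ not incident to an edge of $E$, and let $Y$ be the analogous set for $y$; as in \Cref{claim:med-path}, $|X|,|Y|\ge(\tfrac12+\eta^2)\alpha$, using $d_G(x),d_G(y)\ge\kappa(G)>\alpha$ and $2|E|\le 4\delta\alpha$. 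Just as in \Cref{claim:med-path}, if $\Xm\cup\Ym$ contained an edge $\um\vm$ then we could extend $C$ by a cycle of the form \eqref{eqn:C-1} (when $u,v$ may both be taken on the same side) or \eqref{eqn:C-2} (routing through $v\,x\,P\,y\,u$ when one lies purely in $X$ and the other purely in $Y$); both contradict the maximality of $C$, since the first gains one vertex and the second gains $|P|<\tfrac{n}{\eta^3\alpha}<\tfrac{n}{\delta\alpha}$ vertices while discarding only two $C$-edges, both of which avoid $E$. Hence $\Xm\cup\Ym$ is independent, so $|X\cup Y|=|\Xm\cup\Ym|\le\alpha$ and therefore $|X\cap Y|\ge|X|+|Y|-\alpha\ge 2\eta^2\alpha$.

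Now the new rotation step. The key point is that any two distinct $u,v\in X\cap Y$ are joined by the path $u\,x\,P\,y\,v$, which has length $\ell(P)+2\ge 6$ and interior in $H$. I would pass to a subfamily $U\subseteq X\cap Y$ whose members are pairwise at distance at least $7$ on $C$ and avoid a neighbourhood of $E$, so that the $2|U|$ vertices $\{\um:u\in U\}$ and $\{u\mthree:u\in U\}$ are distinct, and then assemble from these, together with the maximum-size independent set $\Xm\cup\Ym$, a vertex set $S$ with more than $\alpha$ vertices. Then $S$ spans an edge $e$, and since $\Xm\cup\Ym$ is independent, $e$ meets $\{u\mthree:u\in U\}$. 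If $e$ equals $\um\vm$, $\um v\mthree$, or $u\mthree v\mthree$ for distinct $u,v\in U$, then inserting $u\,x\,P\,y\,v$ (or $u\,y\,P\,x\,v$) into a rotation analogous to \eqref{eqn:C-2} (cf.\ \Cref{fig:cycle2}) produces a cycle $C'$ with $|C|<|C'|\le|C|+|P|<\ell+\tfrac{n}{\delta\alpha}$ which still contains $E$ (only a bounded number of $C$-edges near $u$ and $v$ are discarded, and $U$ was chosen to keep these away from $E$), so $C'$ satisfies \ref{itm:long-jump} and \ref{itm:long-chords} and has $|C'|>|C|>\alpha$, contradicting the maximality of $C$. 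The one remaining possibility is $e=\um u\mthree$ for a single $u$, which is a chord of $C$ of length $2$; for this I would arrange $S$ to have surplus at least $\delta\alpha+2$ over $\alpha$, and then repeatedly extract an edge from $S$, treating genuine edges as above and, for a chord $\um u\mthree$, recording the chord and deleting $u\mthree$ from $S$. Since the members of $U$ are far apart on $C$ the recorded chords are pairwise non-intersecting, so after at most surplus-many steps we obtain $\delta\alpha+2$ pairwise non-intersecting chords of length $2$, contradicting \Cref{claim:chords}. Either way we reach a contradiction, which proves the claim.

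The main obstacle is the bookkeeping needed to make the auxiliary set $S$ have the required surplus over $\alpha$. Since $|X\cap Y|$ may be as small as $\Theta(\eta^2\alpha)$, one cannot take $S=\{\um,u\mthree:u\in X\cap Y\}$ and expect it to exceed $\alpha$; $S$ genuinely has to be built by adjoining the ``$-3$'' vertices (and possibly further odd offsets $u^{-5},u^{-7},\dots$) to the maximum-size independent set $\Xm\cup\Ym$, and estimating how many of these are new — roughly $|\{u\in U:u^{-2}\notin X\cup Y\}|$ and its analogues — is exactly where one uses the independence of $\Xm\cup\Ym$ established above together with the bound $|C|>\alpha$ from \Cref{claim:C-length}. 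The second delicate point is that the length-$2$-chord sub-case must be squeezed for $\delta\alpha+2$ pairwise non-intersecting chords, not just one, which forces $U$ — and hence $X\cap Y$, and hence the gap $\eta$ — to be taken generously; this is essentially why the hypothesis is stated with the threshold $(\tfrac12-\eta)\alpha$, the value that first makes $X\cap Y$ robustly larger than $\delta\alpha$. By contrast, the three ``genuine rotation'' sub-cases are routine variants of \eqref{eqn:C-2}, and the length bound required for \ref{itm:long-jump} is immediate from $|P|<\tfrac{n}{\eta^3\alpha}\ll\tfrac{n}{\delta\alpha}$.
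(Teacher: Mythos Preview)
Your overall strategy is right and matches the paper's, but the step you yourself flag as ``the main obstacle'' is a genuine gap, and your proposed workaround does not close it. Thinning $X\cap Y$ to a subset $U$ whose members are pairwise at distance at least $7$ on $C$ is actively harmful: greedily one only keeps about $|X\cap Y|/7$ vertices, and then
\[
|S| \;\ge\; |X\cup Y| + |U| \;\ge\; \big(|X|+|Y|-|X\cap Y|\big) + \tfrac{1}{7}|X\cap Y| \;=\; |X|+|Y| - \tfrac{6}{7}|X\cap Y|,
\]
which can be well below $\alpha$ when $|X\cap Y|$ is a constant fraction of $\alpha$. Adding further odd offsets $u^{-5},u^{-7},\dots$ runs into the same difficulty. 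A second, smaller gap is that your case analysis only treats edges $u^{-3}w^{-}$ with $w\in U$, whereas $w$ can be any element of $X\cup Y$; and ``not incident to $E$'' is not strong enough for the rotations that delete three consecutive $C$-edges (you need distance at least $4$ from $E$).

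The paper avoids all of this with a clean device: rather than thinning, it \emph{partitions} the $C$-neighbours of $\{x,y\}$ (at distance $\ge 4$ from $E$) into $X$ (adjacent to $x$ only), $Y$ (adjacent to $y$ only), and $Z$ (adjacent to both), and takes the set $X^-\cup Y^-\cup Z^-\cup Z^{-3}$. The key observation, which replaces your bookkeeping entirely, is that $Z^{-3}$ is disjoint from $(X\cup Y\cup Z)^-$: if $z^{-3}=w^-$ with $z\in Z$ and $w\in X\cup Y\cup Z$, then $z$ and $w$ are at distance $2$ on $C$ and one is adjacent to $x$, the other to $y$, so a cycle of type \eqref{eqn:C-3} (replace that length-$2$ arc by a path through $P$) already contradicts the maximality of $C$, using $|P|\ge 5$. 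Hence the four sets are pairwise disjoint and $|X^-\cup Y^-\cup Z^-\cup Z^{-3}| = |X\cup Z| + |Y\cup Z| \ge (1+2\eta^2)\alpha$. Now a matching of size at least $\eta^2\alpha$ exists in this set; each matching edge is either one of the three ``genuine rotation'' types (handled exactly as you say, via \eqref{eqn:C-2}, \eqref{eqn:C-4}, \eqref{eqn:C-5}) or of the form $z^-z^{-3}$, giving $\eta^2\alpha \ge \delta\alpha+2$ pairwise non-intersecting chords of length $2$ and contradicting \Cref{claim:chords}. No iteration and no thinning are needed, and your invocation of \Cref{claim:med-path} is unnecessary as well --- \Cref{claim:diam} alone bounds $|P|$.
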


			\begin{proof}
				Suppose that $P$ is a path of length at least $4$ whose ends, denoted $x$ and $y$, have degree at most $(\frac{1}{2} - \eta)\alpha$ in $H$. By \Cref{claim:diam}, we may assume $|P| \le \frac{n}{\delta\alpha}$.

				Let $X$ be the set of vertices in $C$ that are neighbours of $x$ but not of $y$, and are at distance at least $4$ on $C$ from any edge in $E$. Define $Y$ similarly with the roles of $x$ and $y$ reversed, and let $Z$ be the set of vertices in $C$ that are neighbours of both $x$ and $y$, and are at distance at least $4$ from any edge in $E$. Then $|X \cup Z| \ge \alpha - d_H(x) - 10|E| \ge (\frac{1}{2} + \eta^2)\alpha$, with the same estimate holding for $|Y \cup Z|$. Define 
				\begin{equation*}
					\Xm = \{\xm : x \in X\} \qquad
					\Ym = \{\ym : y \in Y\} \qquad
					\Zm = \{\zm : z \in Z\} \qquad
					Z\mthree = \{z\mthree : z \in Z\}. 
				\end{equation*}
				We claim that these four sets are pairwise disjoint. By disjointness of $X, Y, Z$, if not, then $Z\mthree \cap (X \cup Y \cup Z) \neq \emptyset$, showing that there are vertices $u, v$ at distance $2$ on $C$ such that $v$ is a neighbour of $x$ and $u$ a neighbour of $y$.
				This is a contradiction, as can be seen by defining $C'$ as in \eqref{eqn:C-3} (see \Cref{fig:cycle3}).
				Indeed, then $|C'| = |C| - 1 + |P| > |C|$ (using $|P| \ge 5$), and $|C'| \le |C| + |P| \le |C| + \frac{n}{\delta \alpha}$.
				So, the sets $\Xm, \Ym, \Zm, Z\mthree$ are pairwise disjoint, showing that $|\Xm \cup \Ym \cup \Zm \cup Z\mthree| = |X \cup Z| + |Y \cup Z| \ge (1 + 2\eta^2)\alpha$.
				Since $\alpha(G) = \alpha$, there is a matching $M$ of size at least $\eta^2 \alpha$ in $\Xm \cup \Ym \cup \Zm \cup Z\mthree$. 

				Let $e$ be an edge in $M$. We claim that it is of form $\zm z\mthree$, for some $z \in Z$.
				Indeed, suppose not. Then one of the following holds: there exist $u,v \in X \cup Y \cup Z$ such that $e = \um\vm$; there exist $u \in X \cup Y \cup Z$ and $v \in Z$ such that $e = \um v\mthree$; and there exist $u,v \in Z$ such that $e = u\mthree v\mthree$.

				We show now that each of these cases leads to a contradiction. 
				This can be seen in the first case via a cycle as in \eqref{eqn:C-1} or \eqref{eqn:C-2}.
				In the second case, by symmetry, we may assume that $u$ is a neighbour of $y$. Consider the cycle
				\begin{equation} \label{eqn:C-4}
					C' = (uC_{u \to v\mthree} v\mthree \um C_{\um \nto v} vxPyu). 
				\end{equation}
				(See \Cref{fig:cycle4}.)
				Then, $|C'| = |C| - 2 + |P| > |C'|$, and $|C'| \le |C| + |P| \le |C| + \frac{n}{\delta \alpha}$.
				(Here we are implicitly assuming that $u, v$ are at distance at least $4$ on $C$; if this is not the case then we can reach a contradiction via a cycle of form \eqref{eqn:C-3}.)
				Finally, in the third case we can similarly reach a contradiction via the cycle
				\begin{equation} \label{eqn:C-5}
					C' = (uC_{u \to v\mthree} v\mthree u\mthree C_{u\mthree \nto v} vxPyu). 
				\end{equation}
				(See \Cref{fig:cycle5}.)
				\begin{figure}[h]
					\centering
					\begin{subfigure}[b]{.4\textwidth}
						\centering
						\includegraphics[scale = .7]{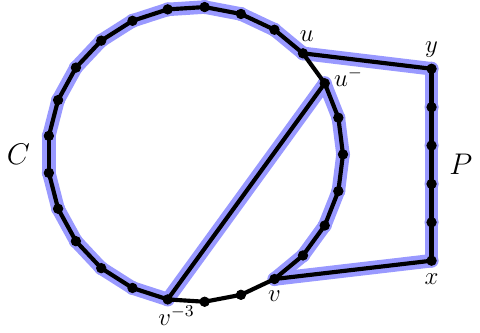}
						\caption{A cycle as in \eqref{eqn:C-4}}
						\label{fig:cycle4}
					\end{subfigure}
					\hspace{.5cm}
					\begin{subfigure}[b]{.4\textwidth}
						\centering
						\includegraphics[scale = .7]{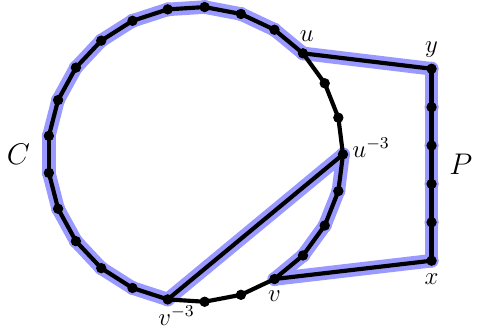}
						\caption{A cycle as in \eqref{eqn:C-5}}
						\label{fig:cycle5}
					\end{subfigure}
					\caption{Cycles as in \eqref{eqn:C-4} and \eqref{eqn:C-5}}
				\end{figure}

				The previous paragraph implies that every edge in $M$ is of the form $\zm z\mthree$ for some $z \in Z$. In particular, $M$ is a set of $\eta^2 \alpha \ge \delta \alpha + 2$ pairwise non-intersecting chords of length $2$ in $C$, contradicting \Cref{claim:chords}.
			\end{proof}

			Consider a longest path $P$ in $H$. Then $\ell(P) \ge 4$, by assumption on $H$. By the last claim, at least one of $P$'s ends has degree at least $(\frac{1}{2} - \eta)\alpha$ in $H$; say $x$ is such an end. Then all of $x$'s neighbours in $H$ are in $V(P)$, by maximality of $P$, showing that there is a cycle $C'$ in $H$ of length at least $(\frac{1}{2} - \eta)\alpha$. Notice that any two vertices in $C'$ can be joined by a path of length at least $4$ in $C'$. Thus we can take $A$ to be a largest set satisfying $V(C') \subseteq A \subseteq V(H)$, such that for any two vertices in $A$ there is a path of length at least $4$ in $H[A]$ that joins these two vertices.

			\begin{claim} \label{claim:A-edges}
				There is a subset $A' \subseteq A$ of size at least $|A| - \delta^{-3}$ such that every $a \in A'$ has at most $\delta \alpha$ neighbours in $V(H) - A$.
			\end{claim}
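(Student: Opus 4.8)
The argument runs entirely off the maximality of $A$. For brevity, call a set $S \subseteq V(H)$ \emph{good} if every two of its vertices are joined by a path of length $\ge 4$ in $H[S]$, so $A$ is a largest good set containing $V(C')$. Write $B = V(H) - A$ and, for $a \in A$, $B_a = N_H(a) \cap B$. First I would note that every $b \in B$ has at most one neighbour in $A$: if $b$ had neighbours $a_1 \ne a_2$ in $A$, then $A \cup \{b\}$ would still be good — a pair inside $A$ is joined using $H[A]$; a pair $\{b, v\}$ with $v \in A - \{a_1\}$ via $b$, then $a_1$, then a path of length $\ge 4$ from $a_1$ to $v$ in $H[A]$; and $\{b, a_1\}$ via $b$, then $a_2$, then a path of length $\ge 4$ from $a_2$ to $a_1$ in $H[A]$ — contradicting the maximality of $A$. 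Hence the sets $B_a$ are pairwise disjoint, and each vertex of $B$ with a neighbour in $A$ lies in exactly one of them.

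Next I would show that for distinct $a, a' \in A$ the sets $B_a$ and $B_{a'}$ lie in different components of $H[B]$. If not, take a shortest path $R = r_0 \cdots r_k$ in $H[B]$ joining some $B_a$ to some $B_{a'}$ over all such pairs, with $r_0 \in B_a$ and $r_k \in B_{a'}$; by minimality $R$ has no chords, no interior vertex has a neighbour in $A$, and $r_0, r_k$ have unique $A$-neighbours $a, a'$ (by the first step). One then checks that $A \cup V(R)$ is good: pairs inside $A$ use $H[A]$, and any pair meeting $V(R)$ is joined by walking along $R$ to $r_0$ or to $r_k$, crossing into $A$ there at $a$ or $a'$, and traversing a path of length $\ge 4$ between $a$ and $a'$ in $H[A]$ (truncated at the appropriate endpoint) — since that inner path already has length $\ge 4$, every such route does, and traversing $R$ from the correct end avoids reusing its vertices. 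This again contradicts maximality.

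I would then prove that $\alpha(H[B_a]) > |B_a|/6$ whenever $B_a \ne \emptyset$, which completes the proof. Suppose $B_a \ne \emptyset$ and $\alpha(H[B_a]) \le |B_a|/6$. Since any graph $G$ admits a partition of its vertex set into at most $\alpha(G)$ paths (Gallai--Milgram), $H[B_a]$ contains a path $Q = q_1 \cdots q_k$ with $k \ge 6$; all the $q_i$ are adjacent to $a$, so $H[V(Q) \cup \{a\}]$ contains the fan with spine $Q$ and apex $a$, and a direct check shows that in a fan whose spine has at least six vertices any two vertices are joined by a path of length $\ge 4$ (using the spine directly when the two vertices are far apart on it, and otherwise going the long way round through the apex). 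Since, by the first step, the only $A$-edges meeting $V(Q)$ run to $a$, it follows that $A \cup V(Q)$ is good — route cross pairs through $a$ and a path of length $\ge 4$ in $H[A]$, and keep pairs inside $V(Q) \cup \{a\}$ inside the fan — contradicting maximality once more. Finally, set $A' = \{a \in A : |B_a| \le \delta\alpha\}$, and for each $a \in A - A'$ pick a maximum independent set $I_a$ of $H[B_a]$, so $|I_a| > |B_a|/6 > \delta\alpha/6$. By the previous paragraph there are no edges between $B_a$ and $B_{a''}$ for distinct $a, a'' \in A - A'$, hence $\bigcup_{a \in A - A'} I_a$ is independent in $H$; since $\alpha(H) \le \alpha$ this forces $|A - A'| \cdot \delta\alpha/6 < \alpha$, i.e. $|A - A'| < 6/\delta \le \delta^{-3}$ (as $\delta \ll 1$), which is the claim.

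The technical heart, and the likeliest source of errors, is the bookkeeping in the last two steps: checking that inserting the short path $R$, or the fan around $a$, into $A$ really preserves goodness. This works precisely because the newly added vertices attach to $A$ only through the single vertex $a$ (or, for $R$, through $a$ and $a'$), so any awkward pair can be rerouted through that vertex together with an already-available path of length $\ge 4$ inside $H[A]$.
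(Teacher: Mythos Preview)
Your proof is correct and takes a genuinely different route from the paper's.

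The paper first shows (as you do) that there is no path of length at least $2$ with ends in $A$ and interior in $B$, but then diverges: it invokes the earlier \Cref{claim:length-4} to argue that, for all but one exceptional $a$, every vertex in the ``block'' $X(a)$ cut off by $a$ has degree at least $(\tfrac12-\eta)\alpha$ in $H$; it then packs vertex-disjoint cycles of length at least $\delta\alpha$ inside each $X(a)$, counts that at most $n/(\delta\alpha)^2 + 1 \le \delta^{-3}$ of the $a$'s can host more than $\delta\alpha$ such cycles, and finally uses a cut-vertex argument to show that few cycles force few $B$-neighbours of $a$. Your argument is more elementary and self-contained: you never touch \Cref{claim:length-4}, you work only with $B_a = N_H(a)\cap B$ rather than the full cut-off block, and the key step is the Gallai--Milgram/fan trick showing $\alpha(H[B_a]) > |B_a|/6$ directly from the maximality of $A$. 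The payoff is a cleaner proof and a sharper bound $|A\setminus A'| < 6/\delta$ (comfortably below the claimed $\delta^{-3}$). The paper's approach, on the other hand, extracts more structure (the partition into blocks $X(a)$ and the high minimum degree inside them), though that extra structure is not used later. Your Step~2 is stated more strongly than you need---``no edges between $B_a$ and $B_{a'}$'' already suffices for the final union-of-independent-sets count---but the stronger statement is true and the proof you give is fine.
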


			\begin{proof}
				We claim that there is no path $P$ in $H$ of length at least $2$, whose ends are in $A$ but whose interior is outside of $A$. Indeed, otherwise, take $A' = A \cup V(P)$. It is easy to check that every two vertices $x, y \in A'$ can be joined by a path of length at least $4$ in $H[A']$. Indeed, this holds by choice of $A$ if $x,y \in A$, so suppose that $x$ is in the interior of $P$. If $y \in A$, let $z$ be an end of $P$ which is not $y$. Then concatenating a $yz$-path of length at least $4$ in $H[A]$ with the segment $P_{z \to x}$, yields a path with the required properties. 
				Similarly, if $x,y$ are both in the interior of $P$, let $z, w$ be the ends of $P$ such that $z$ is closer to $x$ than to $y$ in $P$. Now concatenate $P_{x \to z}$ with a $zw$-path of length at least $4$ in $H[A]$, and with $P_{w \to y}$, to get a path as desired.

				Similarly, there is no cycle of length at least $8$ that intersects $A$ in exactly one vertex.

				By the first paragraph, for every $x \in V(H) - A$ there is a vertex $a(x) \in A$ such that $a(x)$ disconnects $x$ from $A$ in $H$. Let $X(a)$ be the set of vertices $x \in V(H) - A$ such that $a(x) = a$.
				Notice that, by \Cref{claim:length-4} and the choice of $A$, for at most one $a \in A$, the set $X(a)$ contains a vertex of degree less than $(\frac{1}{2} - \eta)\alpha$ in $H$. Let $a'$ be this vertex (if exists).

				Let $\cC(a)$ be a maximal collection of pairwise vertex-disjoint cycles of length at least $\delta \alpha$ in $H[X(a) - \{a\}]$, for $a \in A - \{a'\}$. Let $A'$ be the set of vertices $a \in A - \{a'\}$ such that $|\cC(a)| \le \delta \alpha$. Then $|A - A'| \le \frac{n}{(\delta \alpha)^2} + 1 \le \frac{4}{\delta^2} + 1 \le \delta^{-3}$, by the assumption $n \le 4\alpha^2$, and by disjointness of the cycles in $\cC(a)$ and of the sets $X(a)$. 
				
				It remains to show that every $a \in A'$ has at most $\delta \alpha$ neighbours in $V(H) - A$.
				Indeed, let $a \in A'$, and write $\cC(a) = \{C_1, \ldots, C_r\}$.
				Notice that there are no two vertex-disjoint (except at $a$) paths from $a$ to $C_i$, for any $i \in [r]$, because otherwise the second paragraph would be violated. There is therefore a vertex $y_i$ that disconnects $a$ from $C_i$, for $i \in [r]$. If $a$ has more than $r$ neighbours in $V(H) - A$, let $x$ be such a neighbour which is not in $\{y_1, \ldots, y_r\}$. Then the connected component $H'$ of $H[X(a) - (\{a, y_1, \ldots, y_r\})]$ that contains $x$ is a non-empty graph with minimum degree at least $(\frac{1}{2} - \eta)\alpha - (r+1) \ge \delta \alpha$. So $H'$ has a cycle $C'$ of length at least $\delta \alpha$. Notice that $C'$ is vertex-disjoint of the cycles $C_1, \ldots, C_r$, contradicting the maximality of $\cC(a)$. This shows that $a$ has at most $r \le \delta \alpha$ neighbours in $H - A$, for every $a \in A'$.
			\end{proof}

			Let $W$ be the set of vertices in $C$ with at least two neighbours in $A$.
			We claim that $|A| + |W| \ge (\frac{1}{2} + \eta)\alpha$.
			Indeed, this is clearly the case if $|A| \ge (\frac{1}{2} + \eta)\alpha$, so suppose otherwise. Let $U$ be the set of vertices in $C$ that are neighbours of vertices in $A$, and let $U'$ be the set of vertices in $U$ which are not in edges in $E$. Notice that $(U')^-$ is independent, because otherwise a cycle as in \eqref{eqn:C-1} or \eqref{eqn:C-2}, together with \Cref{claim:diam}, would lead to a contradiction. Thus $|U'| \le \alpha$ and $|U| \le |U'| + 2|E| \le (1 + 2\delta)\alpha$. We conclude that $|W| \ge \alpha/3$, say. 
			Indeed, otherwise each vertex in $A'$ contributes at least $(\frac{1}{2} - \eta -\delta - \frac{1}{3})\alpha \ge \frac{\alpha}{7}$ distinct vertices to $U$, resulting in $U$ having size at least $|A'| \cdot \frac{\alpha}{7} \ge  \frac{\alpha^2}{28} > (1 + 2\delta)\alpha$, a contradiction. It follows that $|A| + |W| \ge (\frac{1}{2} - \eta + \frac{1}{3})\alpha \ge (\frac{1}{2} + \eta)\alpha$, as claimed (using $|A| \ge (\frac{1}{2} - \eta)\alpha$).

			By connectivity and by $|C| > \alpha$, there are at least $\min\{|A|, \alpha - |W|\}$ pairwise vertex-disjoint paths from $A$ to $C - W$; denote such a collection of paths by $\cP$ and assume that interiors of paths in $\cP$ are disjoint of $C$ and $A$. Let $X$ be the set of vertices in $C$ that are either in $W$ or an end of a path in $\cP$, and moreover are at distance at least $4$ from $E$. Then $|X| \ge \min\{|A| + |W|, \alpha\} - 8\delta \alpha \ge (\frac{1}{2} + \eta^2)\alpha$. 
			
			\begin{claim} \label{claim:W}
				Let $u, v \in X$ be distinct. Then there are distinct vertices $x,y$ in $H$ such that $x$ is a neighbour of $v$ and $y$ of $u$, and there is an $xy$-path of length at least $4$ in $H$.
			\end{claim}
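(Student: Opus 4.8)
The plan is a case analysis according to whether each of $u$ and $v$ lies in $W$ or is the end (on $C$) of a path in $\cP$; these possibilities are mutually exclusive, since the paths in $\cP$ end in $C - W$. Two preliminary observations will be used throughout. First, every vertex of $W$ has at least two neighbours in $A$, by definition of $W$. Second, for every $Q \in \cP$, each vertex of $Q$ other than its end on $C$ lies in $V(H)$: indeed, the interior of $Q$ is disjoint from $C$, and $Q$ has an end in $A \subseteq V(H)$, so all of $Q$ apart from its $C$-end is contained in the component $H$ of $G - V(C)$ (and, $H$ being induced, the relevant subpaths of $Q$ live in $H$ as subgraphs).

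The key construction is to attach to each $w \in X$ a vertex $a_w \in A$ and a path $R_w$ in $H$, running from a neighbour $f(w)$ of $w$ to $a_w$, such that $R_w$ meets $A$ only in $a_w$ (with $R_w$ possibly a single vertex). If $w$ is the end on $C$ of a path $Q_w \in \cP$, take $a_w$ to be the other end of $Q_w$ (which lies in $A$), let $R_w = Q_w - w$, and let $f(w)$ be the neighbour of $w$ on $Q_w$ --- an interior vertex of $Q_w$ when $\ell(Q_w) \ge 2$, and equal to $a_w$ when $\ell(Q_w) = 1$; by the second observation $R_w \subseteq V(H)$ and $f(w) \in V(H)$. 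If instead $w \in W$, choose $a_w$ to be one of $w$'s (at least two) neighbours in $A$ and let $R_w$ be the single vertex $a_w$, so $f(w) = a_w$. In both cases $f(w)$ is a neighbour of $w$ lying in $V(H)$, and $R_w$ lies in $H$ and meets $A$ exactly in its $A$-endpoint $a_w$.

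Now fix distinct $u, v \in X$. I would first check that the choices above can be made so that $f(u) \ne f(v)$ and $a_u \ne a_v$: this is automatic when $u$ and $v$ are both ends of paths in $\cP$, as those paths are vertex-disjoint; and when at least one of them lies in $W$, the freedom to pick its $A$-neighbour among (at least) two options --- together with the fact that for a $\cP$-end of a path of length at least $2$ the vertex $f(\cdot)$ is an interior vertex, hence not in $A$ --- rules out any coincidence. Given $a_u \ne a_v$, the defining property of $A$ provides a path of length at least $4$ in $H[A]$ from $a_v$ to $a_u$; concatenating $R_v$, this path, and $R_u$ yields a walk from $f(v)$ to $f(u)$ which is genuinely a path, since $R_v$ (resp. $R_u$) meets the middle path only in $a_v$ (resp. $a_u$) and $R_v, R_u$ are disjoint, and which has length at least $4$ and lies in $H$. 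Taking $x = f(v)$ and $y = f(u)$ completes the argument. I do not expect a real obstacle: the statement is essentially bookkeeping that glues the $A$-to-$C$ connectors in $\cP$ onto the ``any two vertices of $A$ are joined by a path of length $\ge 4$ inside $H[A]$'' property. The only points requiring care are the distinctness of $x$ and $y$ --- which is exactly why the two-neighbours-in-$A$ property of $W$ and the disjointness of the paths in $\cP$ are invoked --- and the routine verification that the concatenation is an honest path.
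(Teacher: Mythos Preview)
Your proposal is correct and follows essentially the same approach as the paper: a case split according to whether each of $u,v$ lies in $W$ or is the $C$-end of a path in $\cP$, followed by concatenating the connector(s) to $A$ with a length-$\ge 4$ path in $H[A]$. Your presentation is slightly more unified (defining $a_w$, $R_w$, $f(w)$ once and then checking distinctness), whereas the paper writes out the three cases explicitly, but the underlying argument and the verifications (disjointness of the $\cP$-paths, interiors of $\cP$-paths avoiding $A$, and the two-neighbours-in-$A$ property of $W$ to ensure $a_u\ne a_v$) are the same.
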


			\begin{proof}
				By symmetry, there are three cases to consider: $u,v \in W$; $v \notin W$, $u \in W$; and $u,v \notin W$.
				In the first case, there are distinct $x,y$ in $A$ such that $x$ is a neighbour of $v$ and $y$ of $u$. By choice of $A$, there is an $xy$-path of length at least $4$ in $H$. In the second case, let $P$ be any path from $v$ to $A$, whose interior is disjoint of $C$ and $A$, and denote its end which is not $v$ by $x'$. Let $y \in A$ be a neighbour of $u$ which is not $x$, and let $Q$ be an $x'y$-path of length at least $4$ in $H[A]$. Taking $x$ to be the neighbour of $v$ in $P$, the path $PQ$ with $v$ removed is an $xy$-path of length at least $4$. Finally, in the third case, we may pick two vertex-disjoint paths $P_1,P_2$ from $v,u$ to $C$, with interior disjoint of $C$ and $A$. Let $x',y'$ be the ends of $P_1,P_2$ in $A$, respectively, let $x,y$ the neighbours of $v,u$ in $P_1,P_2$, and let $Q$ be an $x'y'$-path of length at least $4$ in $A$. Then $P_1QP_2 - \{u,v\}$ is an $xy$-path of length at least $4$. 
			\end{proof}

			One can now show that any two vertices in $X$ are at distance at least $4$ in $C$, via a cycle as in \eqref{eqn:C-3}, using \Cref{claim:W,claim:diam}.

			In particular, the sets $\Xm$ and $X\mthree$, defined as usual, are disjoint. Thus $|\Xm \cup X\mthree| \ge (1 + 2\eta^2)\alpha$, showing that there is a matching $M$ in $\Xm \cup X\mthree$ of size at least $\eta^2\alpha$. 

			We claim that all edges in $M$ are of form $\um u\mthree$, for some $u \in X$. Indeed, if not, there are distinct $u,v \in X$ such that one of $\um\vm$, $\um v\mthree$, $u\mthree, v\mthree$ is an edge in $M$. Each of these cases leads to a contradiction, by considering cycles as in \eqref{eqn:C-2}, \eqref{eqn:C-4}, \eqref{eqn:C-5}, using \Cref{claim:W,claim:diam} to find an appropriate $P$ with $5 \le |P| \le \frac{n}{\delta\alpha}$.

			Thus $M$ is a matching of at least $\eta^2\alpha$ non-intersecting chords of length $2$, a contradiction to \Cref{claim:chords}.
			This concludes the proof that $G - V(C)$ has no paths of length $4$, so $C$ satisfies the requirements of the lemma.
		\end{proof}

	\subsection{Extending a cycle whose complement is $P_5$-free}

		In this subsection we prove \Cref{lem:length-3-remainder}. We first sketch its proof. Suppose that $C$ is a cycle in a graph $G$ satisfying: $|C| < \ell$; $G - V(C)$ is $P_5$-free; and $C$ has $18$ non-intersecting chords of length $2$ or $3$. A simple rotation-extension argument (see \Cref{prop:cycle-extension}) implies that every cycle $C'$ containing the vertices of $C$ can be extended by at most four vertices, from any specified component of $G - V(C')$. This alone allows us to find a cycle of length between $\ell-3$ and $\ell$. Another consequence of \Cref{prop:cycle-extension} is that, if for such a cycle $C'$ there is a vertex outside of $C'$ with at most one neighbour outside of $C'$, then $C'$ can be extended by exactly that one vertex. It thus suffices to find a cycle $C'$ such that: $V(C) \subseteq V(C')$; $|C'| \le \ell$; and $G - V(C')$ has at least three vertices in components which are trees. This can be done by three applications of \Cref{prop:tree}, asserting that a cycle $C'$ containing the vertices of $C$ can be extended by at most four vertices from a specified component $H$ of $G - V(C')$, such that the remainder of the component is a non-empty tree. This might fail if $|C|$ is too close to $\ell$, in which case we can use the short chords in $C$ to get a cycle of length $\ell$.

		Before turning to the proof, we state and prove three propositions.
		The first one is a consequence of a simple rotation and extension operation. 

		\begin{proposition} \label{prop:cycle-extension}
			Let $G$ be a graph satisfying $\kappa(G) > \alpha(G)$.
			Let $C$ be a cycle, $H$ a component in $G - V(C)$, and $u,v$ distinct vertices in $H$.
			Then there is a cycle $C'$ such that: $V(C')$ is the union of $V(C)$ with the vertices of a path in $H$ that contains $u$ and avoids $v$; and $C'$ contains all but at most two edges of $C$. 
		\end{proposition}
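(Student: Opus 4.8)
The plan is to prove Proposition~\ref{prop:cycle-extension} by a standard rotation-extension argument, exploiting the Chv\'atal--Erd\H{o}s-type hypothesis $\kappa(G) > \alpha(G)$ to force an edge. First I would fix an orientation of $C$, and among all cycles $C'$ whose vertex set is the union of $V(C)$ with the vertices of a path $P$ in $H$ that contains $u$ but avoids $v$, choose one with $|P|$ as large as possible (such cycles exist because $H$ is connected, so $u$ has a neighbour in $C$, and one may route through $C$ using a single vertex of $H$, namely $u$ itself; this baseline cycle uses all edges of $C$ except the two incident to the point where it re-enters $C$, or rather it misses only the one edge of $C$ between the two attachment points — in any case at most two). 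Write $P = p_1 \ldots p_t$ for this extremal path, with $p_1, p_t$ the endpoints of $P$, which are exactly the two vertices of $P$ adjacent to $C$ in $C'$. I want to show that either $t$ is already ``large enough'' that we are done, or we can enlarge $P$, contradicting maximality — but actually the statement only asks for \emph{some} such cycle missing at most two edges of $C$, so the content is really just: pick such a cycle, and verify it misses at most two edges of $C$. That verification is immediate from the construction: $C'$ traverses all of $C$ except the segment of $C$ strictly between the two attachment vertices on the side not used — and by choosing the two attachment points to be consecutive occurrences along $C$ (which we can arrange, since between any two vertices of $C$ there is a sub-arc), $C'$ omits at most one edge of $C$; allowing a rotation step to relocate an attachment point costs at most one more omitted edge, giving at most two.

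Concretely, the key steps in order: (i) Since $H$ is connected and $\kappa(G) > \alpha(G) \ge 1$, the vertex $u$ lies in a component $H$ which has at least one edge to $C$ — but more carefully, I would use that $u$ has a neighbour $c_1$ on $C$ and $v$ also attaches to $C$; actually the cleanest route is: let $P$ be a path in $H$ from $u$ to some vertex $u'$ with a neighbour on $C$, chosen so $P$ avoids $v$ (possible: delete $v$ from $H$; if $u$ is separated from all of $C$-neighbourhood by $v$ alone, then $v$ together with $V(C)$ would be a cutset of size $|C|+1$... but $\kappa(G) > \alpha \ge 1$ doesn't immediately bound $|C|$, so I instead argue $v$ alone cannot separate $u$ from $N(C)$ in $H$ provided $H$ has another vertex adjacent to $C$, which holds when $|H| \ge 2$ by connectivity of $G$). (ii) Take $P$ maximal among $uv$-avoiding paths in $H$ startable at $u$ and endable at a $C$-neighbour. (iii) Form $C'$ by joining the two ends of $P$ across $C$ along the shorter arc, and check the edge count.

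The step I expect to be the genuine obstacle is (i): handling the requirement that the path \emph{avoids} $v$ while still reaching $C$. The hypothesis $\kappa(G) > \alpha(G)$ must be invoked here — if $v$ were the unique vertex of $H$ separating $u$ from $C$, then (since $H$ is a component of $G - V(C)$) the set $V(C) \cup \{v\}$ would separate $u$ from the rest of $G$; this is a cutset, so $|C| + 1 \ge \kappa(G) > \alpha(G)$, i.e.\ $|C| \ge \alpha(G)$. This does not yet give a contradiction, so the argument instead needs $u$ to have a neighbour on $C$ \emph{directly} or to route around $v$; I would resolve this by observing that if $\{v\}$ separates $u$ from $N_H(C) \cup N_G(C)$... — in the write-up I would simply take $u'$ to be any vertex of $H$ distinct from $v$ adjacent to $C$ (exists unless $H = \{v\} \cup \{$things only reachable through $v\}$, which forces $u$ itself adjacent to $C$ and we take $P = u$), then take a shortest $u$--$u'$ path in $H - v$, which exists because... — and here is where one genuinely needs that $H - v$ is connected or at least that $u, u'$ lie in a common component, which again reduces to $v$ not being a cutvertex of $H$ in a bad way, ultimately handled by the connectivity hypothesis bounding cutset sizes. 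I would present (i) carefully and treat (ii)--(iii) as routine.
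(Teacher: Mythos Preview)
Your proposal has a genuine gap: you never actually deploy the hypothesis $\alpha(G)<\kappa(G)$ to force an edge, even though your first sentence promises to. Almost all of your effort goes into avoiding $v$, but that is the trivial part --- simply work in $G-\{v\}$, which is still $\alpha$-connected since $\kappa(G)>\alpha$. The real difficulty, which your sketch glosses over, is building a cycle that contains \emph{all} of $V(C)$ together with a path in $H$ through $u$. Your claim that ``$u$ has a neighbour in $C$'' is unjustified (a vertex deep inside $H$ need have no neighbour on $C$), and even if the two ends of your path $P$ did have neighbours $a,b$ on $C$, there is no reason for $a$ and $b$ to be consecutive. The sentence ``by choosing the two attachment points to be consecutive occurrences along $C$ (which we can arrange\dots)'' is precisely the step that needs an argument, and none is given; the parenthetical ``allowing a rotation step to relocate an attachment point costs at most one more omitted edge'' is a description of the desired conclusion, not a proof of it.

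What the paper does is the classical Chv\'atal--Erd\H{o}s rotation. In $G-\{v\}$, Menger gives a fan of $\min\{|C|,\alpha\}$ internally disjoint paths from $u$ to $C$, landing at $w_1,\dots,w_\alpha\in V(C)$. If two of the $w_i$ are consecutive on $C$ (in particular if $|C|<\alpha$, so the fan hits every vertex of $C$), splice in the two corresponding paths and miss one edge of $C$. Otherwise $\{w_1^-,\dots,w_\alpha^-\}\cup\{u\}$ has $\alpha+1$ vertices and hence spans an edge; it cannot be of the form $uw_i^-$ (that would make $w_i^-$ an attachment point consecutive with $w_i$), so it is $w_i^-w_j^-$, and the rotated cycle
\[
C'=(u\,P_i\,w_i\,C_{w_i\to w_j^-}\,w_j^-\,w_i^-\,C_{w_i^-\nto w_j}\,w_j\,P_j\,u)
\]
misses exactly the two edges $w_iw_i^-$ and $w_jw_j^-$. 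This is the ``force an edge'' step your opening line alludes to but your outline never executes; without it, steps (ii)--(iii) do not go through.
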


		\begin{proof}
			Write $\alpha = \alpha(G)$. Because $\kappa(G) > \alpha$, the graph $G - \{v\}$ is $\alpha$-connected. 
			If there is a vertex $w$ in $C$ such that $w$ and $\wm$ can be joined by a path in $G - \{v\}$, whose interior contains $u$ and is disjoint of $C$, then we can define 
			\begin{equation*}
				C' = (wC_{w \to \wm}\wm P w).
			\end{equation*}
			Then $C'$ contains all edges in $C$ except for $w \wm$ and its vertex set is the union of $V(C)$ with the vertices of the path $P - \{w,  \wm\}$. Suppose now that no such vertex $w$ exists.

			If $|C| < \alpha$, then by connectivity there are $|C|$ pairwise vertex-disjoint (except at $u$) paths from $u$ to $C$ in $G - \{v\}$, leading to a contradiction to the assumption we just made. So $|C| \ge \alpha$, implying that there are pairwise vertex-disjoint (except at $u$) paths $P_1, \ldots, P_{\alpha}$ from $u$ to $C$ in $G - \{v\}$. We assume that the interiors of the paths $P_i$ are disjoint of $C$. Let $w_i$ be the end of $P_i$ which is not $u$. Then by the above assumption no two vertices in $\{w_1, \ldots, w_{\alpha}\}$ are consecutive, and there is no edge between $u$ and $\wm_i$ for any $i \in [\alpha]$.

			The set $\{\wm_i : i \in [\alpha]\} \cup \{u\}$ has size $\alpha + 1$, and so it spans an edge $e$. Since there is no edge of form $u\wm_i$, we have $e = \wm_i \wm_j$ for some distinct $i,j \in [\alpha]$. We then get the cycle
			\begin{equation*}
				C' = (u P_i w_i C_{w_i \to \wm_j} \wm_j \wm_i C_{\wm_i \nto w_j} w_j P_j u),
			\end{equation*}
			like in \eqref{eqn:C-2} or \Cref{fig:cycle2} but with different notation.
			It is easy to check that $C'$ satisfies the requirements of the proposition. Indeed, $C'$ uses all edges of $C$ except for $w_i\wm_i$ and $w_j \wm_j$, and $V(C')$ is the union of $V(C)$ with the vertices of the path $(P_i \cup P_j) - \{w_i, w_j\}$.
		\end{proof}

		The next proposition gives some structural information regarding $P_5$-free graphs.

		\begin{proposition} \label{prop:P-5-free-structure}
			Let $H$ be a connected $P_5$-free graph. Then one of the following holds: $H$ is a complete graph on four vertices; $H$ is a tree; or there is a vertex $u$ such that $H - \{u\}$ is a forest.
		\end{proposition}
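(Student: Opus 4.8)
The plan is to dispose of the easy structural possibilities first and then focus on the case where every cycle of $H$ is a triangle.

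If $H$ is acyclic it is a tree and we are done, so assume $H$ has a cycle. A cycle of length $\ell$ contains a path on $\ell$ vertices, and $H$ has no path on five vertices, so every cycle of $H$ has length $3$ or $4$; this splits the argument into two cases. In the case that $H$ contains a cycle $C = v_1 v_2 v_3 v_4 v_1$ of length $4$, I would first show that no vertex outside $C$ has a neighbour on $C$: if $w \notin V(C)$ is adjacent to $C$, relabel so that $w v_1$ is an edge, and then $w v_1 v_2 v_3 v_4$ is a path on five vertices, a contradiction. By connectivity this forces $V(H) = \{v_1,v_2,v_3,v_4\}$, so $H$ is a four-vertex graph containing $C_4$. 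If $H = K_4$ we are in the first alternative; otherwise $H$ omits at least one of the diagonals $v_1 v_3$, $v_2 v_4$, say $v_1 v_3 \notin E(H)$, and then $H - v_2$ has vertex set $\{v_1,v_3,v_4\}$ with edge set contained in $\{v_1 v_4, v_3 v_4\}$, hence is a forest.

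In the remaining case every cycle of $H$ is a triangle, and $H$ has at least one triangle. Here the goal is to find a vertex $u$ lying on every triangle of $H$; then $H - u$ is a subgraph of $H$ containing no triangle and no longer cycle, hence a forest, which is the third alternative. To produce such a $u$ I would establish two facts by short $P_5$-avoidance arguments. First, any two distinct triangles of $H$ share an edge: if two triangles are vertex-disjoint, take a shortest path of $H$ joining them and splice it with one edge from one triangle and one or two edges from the other (two if the connecting path is a single edge) to obtain a path on at least five vertices; if two triangles meet in exactly one vertex $a$, say $abc$ and $ade$, then $c\,b\,a\,d\,e$ is a path on five vertices. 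Second, $H$ has at most two triangles: by the first fact we may take two of them to be $abc$ and $abd$; any further triangle $T_3$ shares an edge with each of these, and chasing its common vertices with the two (and using that all of $a,b,c,d$ are distinct) shows that either $T_3 = abe$, in which case $c\,a\,e\,b\,d$ is a path on five vertices, or $T_3 \in \{acd, bcd\}$, in which case $H[\{a,b,c,d\}] = K_4$, which contains a $4$-cycle — a contradiction. Given these two facts, the desired $u$ exists: if $H$ has a single triangle, take any of its vertices; if $H$ has exactly two, they share an edge and we may take either endpoint of that edge.

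The routine parts are the four-cycle case and the final reduction ``common vertex of all triangles $\Rightarrow H - u$ is a forest''. The main obstacle is the bookkeeping in the triangle case: one must check in each configuration that the five vertices named in the putative $P_5$ really are distinct (this is precisely where the distinctness of the triangles and the absence of a $4$-cycle get used), and verify that the list of possibilities for a third triangle is exhaustive — namely that it shares one of the three edges of $abc$, and then that sharing an edge with $abd$ pins it down as claimed.
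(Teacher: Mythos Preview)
Your proof is correct. The overall shape matches the paper's: reduce to cycles of length $3$ or $4$, handle the $C_4$ case by showing $|V(H)|=4$, and then deal with the case where every cycle is a triangle.

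The difference is in how you treat the triangle case. The paper argues locally around a single triangle $C$: since there is no $C_4$, no two vertices of $C$ have a common neighbour outside $C$; then $P_5$-freeness forces at most one vertex $u\in V(C)$ to have neighbours outside $C$, and every such neighbour is a leaf whose only neighbour is $u$. This immediately gives $H-\{u\}$ a forest, with essentially no case analysis. Your approach is global: you classify all triangles of $H$, showing first that any two share an edge and then that there are at most two, and take $u$ to be a common vertex. This is also valid, but it incurs more bookkeeping (the exhaustive check on a third triangle, and the distinctness verifications you flag). The paper's argument is shorter and avoids the enumeration entirely; on the other hand, your argument makes the structure of the triangle set explicit, which is a nice side observation even if not needed here.
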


		\begin{proof}
			Suppose that $H$ is not a tree. Then it contains a cycle $C$. By $P_5$-freeness, $C$ has length $3$ or $4$. If it has length $4$, then $H$ has exactly four vertices, as adding a pendant edge to a $4$-cycle results in the existence of a $P_5$. In this case, it is easy to check that $H$ is either complete, or there is a vertex whose removal from $H$ leaves a tree.
			Suppose now that $C$ is a triangle and that $G$ has no $4$-cycles. The latter assumption implies that no two vertices in $C$ have a common neighbour outside of $C$. Hence, $P_5$-freeness implies that there is at most one vertex in $C$, say $u$, that has neighbours outside of $C$, and that any neighbour of $u$ outside of $C$ has no neighbours other than $u$. In particular, $H - \{u\}$ is a forest.
		\end{proof}

		The final proposition is a consequence of the former two.

		\begin{proposition} \label{prop:tree}
			Let $G$ be a graph satisfying $\kappa(G) > \alpha(G)$.
			Let $C$ be a cycle in a graph $G$ and let $H$ be a (non-empty) component in $G - V(C)$. Suppose that $H$ is $P_5$-free. Then there is a cycle $C'$ such that: $|C| \le |C'| \le |C|+4$; $V(C) \subseteq V(C')$; $C'$ misses at most four edges of $C$; and $H - V(C')$ is a non-empty forest.
		\end{proposition}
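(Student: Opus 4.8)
The plan is to feed the $P_5$-free component $H$ into \Cref{prop:P-5-free-structure} and then drive everything with \Cref{prop:cycle-extension}. The observation that makes the bookkeeping painless is that, since $H$ is $P_5$-free, every path contained in $H$ has at most four vertices; and \Cref{prop:cycle-extension} replaces $V(C)$ by $V(C)\cup V(P)$ for such a path $P$ (with $V(P)\cap V(C)=\emptyset$, as $H$ is a component of $G-V(C)$) while keeping all but at most two edges of $C$. Thus a single application automatically yields a cycle $C'$ with $V(C)\subseteq V(C')$ (so $|C|\le|C'|$), $|C'|=|C|+|V(P)|\le|C|+4$, and at most two edges of $C$ missing. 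The only real task is to choose the two vertices fed to \Cref{prop:cycle-extension} so that the leftover $H-V(C')=H-V(P)$ is both non-empty and a forest.

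If $|H|=1$ we take $C'=C$, so that $H-V(C')=H$ is a single vertex, i.e.\ a non-empty forest, and all other requirements are trivial. So assume $|H|\ge 2$, and apply \Cref{prop:P-5-free-structure}: either $H=K_4$, or there is $u_0\in V(H)$ with $H-\{u_0\}$ a forest. The latter option also covers the case ``$H$ is a tree'', since removing any vertex of a tree leaves a forest, so then we may take $u_0$ to be an arbitrary vertex of $H$.

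Suppose first that $H-\{u_0\}$ is a forest. Pick any $v\in V(H)-\{u_0\}$ (possible as $|H|\ge 2$) and apply \Cref{prop:cycle-extension} with these $u_0,v$, obtaining $C'$ and a path $P$ in $H$ with $u_0\in V(P)$ and $v\notin V(P)$. Then $H-V(C')=H-V(P)$ is a subgraph of the forest $H-\{u_0\}$ and contains $v$, hence is a non-empty forest; together with the generalities above, $C'$ satisfies all four conclusions.

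It remains to treat $H=K_4$, say on $\{a,b,c,d\}$; this is the only case needing a second step, since one application can leave a triangle behind. Apply \Cref{prop:cycle-extension} with $u=a$, $v=b$ to get $C_1$ and its path $P\subseteq\{a,c,d\}$ with $a\in V(P)$, $b\notin V(P)$. Then $H-V(C_1)=\{b\}\cup(\{c,d\}-V(P))$ always contains $b$, and unless $P=\{a\}$ it has at most two vertices and is a non-empty forest, which finishes the argument. If $P=\{a\}$, the triangle on $\{b,c,d\}$ has all its neighbours in $\{b,c,d\}\cup V(C_1)$ and so is a component of $G-V(C_1)$; applying \Cref{prop:cycle-extension} once more, with $u=b$ and $v=c$, produces $C_2$ whose new path lies in $\{b,d\}$ and contains $b$, so $H-V(C_2)\subseteq\{c,d\}$ contains $c$ and is a non-empty forest. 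Finally $V(C)\subseteq V(C_1)\subseteq V(C_2)$, $|C_2|\le|C_1|+2\le|C|+3$, and since $C_1$ misses at most two edges of $C$ and $C_2$ misses at most two of $C_1$, $C_2$ misses at most four edges of $C$. I do not expect any genuine difficulty: given \Cref{prop:P-5-free-structure} and \Cref{prop:cycle-extension}, the only point needing care is this $K_4$ sub-case, where a single extension may have to be corrected by a second.
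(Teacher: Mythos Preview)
Your proof is correct and follows essentially the same approach as the paper: apply \Cref{prop:P-5-free-structure} to reduce to the cases ``tree / remove one vertex to get a forest / $K_4$'', and then use \Cref{prop:cycle-extension} once (or twice, in the $K_4$ case when the first application only absorbs a single vertex) to ensure the leftover piece of $H$ is a non-empty forest. The only cosmetic difference is that the paper simply takes $C'=C$ whenever $H$ is already a tree, whereas you fold that case into the ``remove one vertex'' case and perform an (unnecessary but harmless) extension.
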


		\begin{proof}
			If $H$ is a tree, there is nothing to prove (take $C' = C$). 
			If there is a vertex $u$ in $H$ such that $H - \{u\}$ is a forest, then pick any $v \in V(H) - \{u\}$, and let $C'$ be a cycle as guaranteed by \Cref{prop:cycle-extension}. Then $|C| < |C'| \le |C| + 4$ (the upper bound is by $P_5$-freeness), $H - V(C')$ is a forest that contains $v$, and $C$ avoids at most two edges of $C$.

			It remains to consider the case $H \cong K_4$. By \Cref{prop:cycle-extension}, there is a cycle $C''$ such that $V(C) \subsetneq V(C'') \subsetneq V(C) \cup V(H)$, and $C''$ avoids at most two edges of $C$. Let $H' = H - V(C'')$. If $H'$ consists of at most two vertices, we are done. Otherwise, $H'$ is a triangle. Then we can apply \Cref{prop:cycle-extension} again to obtain a cycle $C'$ such that $V(C'') \subsetneq V(C') \subsetneq V(C'') \cup V(H')$ and $C'$ misses at most two edges of $C''$, and thus at most four edges of $C$. Now $H - V(C'')$ has at least one and at most two vertices, and so it is a tree. 
		\end{proof}

		Finally, here is the proof of the lemma.

		\begin{proof}[Proof of \Cref{lem:length-3-remainder}]
			Write $\alpha = \alpha(G)$.

			First, we pick cycles $C_1, C_2, C_3$ as follows, where $F_i$ is the union of components of $G - V(C_i)$ which are trees, and $G_i = G - V(F_i)$. 
			If $|F_{i-1}| \ge i$ or $|C_{i-1}| \ge \ell - |F_{i-1}|$, define $C_i = C_{i-1}$. Otherwise, we can apply \Cref{prop:tree}, with the cycle $C_{i-1}$ and any component in $G_{i-1} - C_{i-1}$, to find a cycle $C_i$ such that: $|C_{i-1}| \le |C_i| \le |C_{i-1}| + 4$; $V(C_{i-1}) \subseteq V(C_i)$; $C_i$ misses at most four edges of $C_{i-1}$; and $G_{i-1} - V(C_i)$ has a component which is a tree. 

			Notice that $|C_i| \le \min\{|C_0| + 4i, \ell + 3\}$; $|F_i| \ge \min\{i, \ell - |C_i|\}$; and $C_i$ avoids at most $4i$ edges of $C_0$, for $i \in [3]$.

			We distinguish three cases.
			\begin{itemize}
				\item
					$|C_3| = \ell + 2$. By the assumption that $C_0$ has at least $18$ pairwise non-intersecting chords of length $2$ or $3$, and the fact that $C_3$ misses at most $12$ edges of $C_0$, the cycle $C_3$ has either a chord of length $3$, or two non-intersecting chords of length $2$. Both scenarios allow for shortening $C_3$ by exactly $2$, to obtain an $\ell$-cycle.
				\item
					$|C_3| \in \{\ell + 1, \ell + 3\}$. Notice that $|F_3| \ge 1$ (otherwise $C_3 = C_0$, a contradiction to $|C_0| < \ell$); let $u$ be a vertex of degree at most $1$ in $F_3$. By \Cref{prop:cycle-extension}, there is a cycle $C_4$ with $V(C_4) = V(C_3) \cup \{u\}$ which misses at most two edges of $C_3$, and thus at most $14$ edges of $C$. So $|C_4| \in \{\ell + 2, \ell + 4\}$ and $C_4$ has four non-intersecting chords of length $2$ or $3$. These can be used to form a cycle of length $\ell$.
				\item
					$|C_3| \le \ell$.
					Let $C_4$ be a longest cycle satisfying $V(C_3) \subseteq V(C_4) \subseteq V(G) - V(F_3)$ and $|C_4| \le \ell$.
					We claim that $|C_4| \ge \ell - 3$ or $V(C_4) = V(G) - V(F_3)$.  
					Indeed, suppose not. Then we can apply \Cref{prop:cycle-extension} with a vertex in $V(G) - (V(F_3) \cup V(C_4))$ to obtain a cycle $C_5$ with $V(C_4) \subseteq V(C_5) \subseteq V(G) - V(F_3)$ and $|C_5| \le |C_4| + 4 \le \ell$, contradicting the maximality of $C_4$.

					Write $r = \ell - |C_4|$.
					Recalling that $|F_3| \ge \min\{3, \ell - |C_3|\}$, we have $|F_3| \ge r$. As $F_3$ is a forest, there are distinct vertices $u_1, \ldots, u_r$ in $F_3$ such that $u_j$ is a vertex of degree at most $1$ in $F_3 - \{u_1, \ldots, u_{j-1}\}$. 
					Apply \Cref{prop:cycle-extension} $r$ times, with $u_j$ and its unique neighbour in $F_3 - \{u_1, \ldots, u_{j-1}\}$ (if exists) in the $j^{\text{th}}$ application. These applications result in a cycle $C'$ with $V(C') = V(C) \cup \{u_1, \ldots, u_j\}$. In particular $|C'| = \ell$.
			\end{itemize}
			This completes the proof that $G$ contains a cycle of length $\ell$.
		\end{proof}

\section{Middle range} \label{sec:middle}

	In this section we prove \Cref{lem:middle-range} (restated here), about cycles whose length is not too small and not too large. 

	\lemMiddleRange*

	The general outline of the proof is similar to the proof of the middle range in \cite{draganic2023chvatal}. We start with a relatively short path $P$, which has the property that it can be shortened by any small amount (via short non-intersecting chords). We then extend $P$ to a cycle $C$ of length at most $n/\alpha$. In \cite{draganic2023chvatal} it was essentially trivial to show that such a cycle $C$ exists. Here we need to work quite a lot harder; see \Cref{lem:n-over-alpha}. 

	Next, writing $P' = C - E(P)$, we show that there is another path $P''$ with the same ends as $P'$, such that together with $P$ it forms a cycle of length slightly more than $\ell$. 
	This is achieved using \Cref{lem:mid-range}, which strengthens Lemma 2.10 in \cite{draganic2023chvatal}.

	Finally, this can be modified to a cycle of length exactly $\ell$, using the property of $P$. 

	\subsection{Lemmas}

		We shall need the following lemma, that builds on \Cref{lem:path-chords}, and yields a path with many chords that can be extended to a cycle of length at most $n/\alpha$.

		\begin{lemma} \label{lem:n-over-alpha}
			Let $0 < \delta \ll 1$, and let $\alpha, n$ be positive integers satisfying: $\alpha$ and $n/\alpha$ are large; $n \le 4\alpha^2$; and $n \ge \alpha$.
			Suppose that $G$ is a graph on $n$ vertices, with $\alpha(G) = \alpha$ and $\kappa(G) > \alpha$.
			Then there are paths $P_0$ and $P_1$, which have the same ends and vertex-disjoint interiors, such that: $\ell(P_0) \le 7\delta n / \alpha$; $\ell(P_1) \ge \delta n / \alpha$; $\ell(P_0) + \ell(P_1) \le n/\alpha$; and $P_0$ has at least $\delta n / \alpha$ non-intersecting chords of length $2$ or $3$, with at least one of length $2$. 
		\end{lemma}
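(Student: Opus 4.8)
The plan is to first use Lemma 2.4 (the path-with-chords lemma) to obtain the ``core'' path $P_0$ carrying many short non-intersecting chords, and then to extend it to a short cycle. Concretely, set $m = \lceil \delta n/\alpha \rceil$; since $n/\alpha$ is large we have $m \le \alpha/6$ (using $n \le 4\alpha^2$, so $\delta n/\alpha \le 4\delta\alpha < \alpha/6$), so Lemma 2.4 yields a path $P_0$ with $\ell(P_0) \le 3m \le 7\delta n/\alpha$ that has $m \ge \delta n/\alpha$ pairwise non-intersecting chords of length $2$ or $3$, at least one of length $2$. Denote the ends of $P_0$ by $x,y$. It remains to find an $xy$-path $P_1$, internally disjoint from $P_0$, with $\delta n/\alpha \le \ell(P_1)$ and $\ell(P_0) + \ell(P_1) \le n/\alpha$.

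To build $P_1$, I would pass to the graph $G_0 = G - (V(P_0) - \{x,y\})$, which satisfies $\kappa(G_0) \ge \kappa(G) - \ell(P_0) \ge \alpha - 7\delta\alpha > \alpha/2$ and $|G_0| \ge n - 3m \ge n/2$. By Menger's theorem there are at least $\alpha/2$ internally disjoint $xy$-paths in $G_0$; a shortest one, call it $Q$, has $\ell(Q) \le 2|G_0|/\alpha \le 2n/\alpha$ — but this is too long, so a single shortest path is not quite enough and I need the upper bound $\ell(P_0)+\ell(P_1)\le n/\alpha$, i.e.\ $\ell(P_1) \le n/\alpha - \ell(P_0)$, which is roughly $n/\alpha$. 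So instead I would argue more carefully: among the $\alpha/2$ internally disjoint $xy$-paths in $G_0$, their total number of internal vertices is at most $|G_0| \le n$, so the average length is at most $2n/\alpha + 1$; but we want a path of length at most about $n/\alpha$. The fix is to note that at least half of these paths have length at most $4n/\alpha + 2$... this still is not small enough. The cleaner route: take a shortest $xy$-path $Q$ in $G_0$; if $\ell(Q) \le n/\alpha - \ell(P_0)$ we may be in trouble only with the \emph{lower} bound $\ell(Q) \ge \delta n/\alpha$, which can be arranged by padding. If on the other hand $Q$ is long, use a rotation/shortening argument (Lemma 3.5 or 3.6 type, or the contraction of chords) to cut it down; but in a general $\alpha$-connected graph a shortest $xy$-path can genuinely have length $\Theta(n/\alpha)$, so this is exactly the tight regime, and the constant must be nursed: $\ell(Q) \le |G_0|/\lceil\alpha/2\rceil \le (n - 3m)/(\alpha/2)$; combined with $\ell(P_0) \le 3m$ we get $\ell(P_0) + \ell(Q) \le 3m + 2(n-3m)/\alpha \le 3m + 2n/\alpha$. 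This overshoots $n/\alpha$ by a factor roughly $2$, so a shortest-path bound alone does \emph{not} suffice, and this is the crux.

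I expect the main obstacle to be precisely getting the total length $\ell(P_0)+\ell(P_1)$ below $n/\alpha$ rather than $2n/\alpha$. To overcome this I would not take a shortest $xy$-path globally, but rather exploit \emph{all} $\alpha/2$ (or more, since $\kappa(G)>\alpha$) internally disjoint paths from $x$ to $y$ simultaneously: among $\kappa(G_0) > \alpha/2$ such paths, if every one had length exceeding $n/\alpha - \ell(P_0)$, their internal vertices would number more than $(\alpha/2)(n/\alpha - \ell(P_0) - 1) \ge n/2 - O(\delta n)$ — which is \emph{less} than $|G_0| \ge n - 3m$, so this counting does not immediately give a contradiction either. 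The genuine fix is to use $\kappa(G) > \alpha$ more aggressively together with $\alpha(G) = \alpha$: by Chv\'atal--Erd\H{o}s $G$ is Hamiltonian, and more relevantly one can find \emph{many more} than $\alpha/2$ internally disjoint $xy$-paths, or use an averaging over a Hamilton cycle of $G$ to locate an $xy$-path of length at most $n/\kappa(G_0) + O(\ell(P_0)) \le 2n/\alpha$; to push below $n/\alpha$ one chooses $P_0$ with ends $x,y$ that are themselves only $\delta n/\alpha$ apart along a short cycle $C_0$ extending $P_0$ (which exists by the Chv\'atal--Erd\H{o}s-type argument already used inside the proof of Lemma 3.1), and lets $P_1 = C_0 - E(P_0)$. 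Then automatically $\ell(P_0) + \ell(P_1) = |C_0| \le n/\alpha$ provided $C_0$ was chosen with $|C_0| \le n/\alpha$ — so the real work is producing a cycle $C_0$ of length at most $n/\alpha$ through $P_0$, which I would do by the same connectivity argument as in the proof of Lemma 3.1 (there are $\ge \alpha/2$ internally disjoint $xy$-paths in $G_0$, one has length $\le 2|G_0|/\alpha$; then shorten using the short chords of $P_0$ down to total length $\le n/\alpha$, which is possible because $m = \lceil \delta n/\alpha\rceil$ chords give slack $\ge \delta n/\alpha$, and $2|G_0|/\alpha - n/\alpha \le n/\alpha$ is \emph{not} covered by $m$ chords of total savings only $\delta n/\alpha$) — so ultimately the dependence forces a more delicate choice, likely splitting $P_1$ itself to carry chords, and this balancing of the two length budgets against the chord-savings budget is where the proof's difficulty concentrates. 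I would finish by verifying the lower bound $\ell(P_1) \ge \delta n/\alpha$, padding $P_1$ if necessary by \emph{not} fully shortening it, which is harmless since we have a two-sided target.
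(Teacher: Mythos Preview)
Your diagnosis of the difficulty is accurate, but the proposal does not overcome it. You correctly observe that after removing the interior of $P_0$ the connectivity drops only to roughly $(1-O(\delta))\alpha$, so Menger gives about $\alpha$ internally disjoint $xy$-paths and hence a shortest one of length at most about $2n/\alpha$; you also correctly observe that the $\delta n/\alpha$ chords on $P_0$ buy you savings of only $O(\delta n/\alpha)$, nowhere near the deficit of $n/\alpha$. From that point on the proposal circles (Hamilton cycles, averaging, ``padding $P_1$'') without ever producing a mechanism that forces $\ell(P_0)+\ell(P_1)\le n/\alpha$. The sentence ``this balancing \ldots\ is where the proof's difficulty concentrates'' is an admission, not a solution.

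What the paper does is change the construction entirely in the bad case. One first extends the chord-carrying path $Q_1$ by a short tail $Q_2$ to a new endpoint $z$, and asks whether there is an $xz$-path of length at most $(1-4\delta)n/\alpha$ in $G'=G-\mathrm{int}(Q_1Q_2)$. If yes, take $P_0=Q_1$ and $P_1 = Q_2$ followed by that path; all bounds are immediate. If no, then the distance layers $U_i=\{v:\mathrm{dist}_{G'}(x,v)=i\}$ are nonempty for $i\le (1-4\delta)n/\alpha$ and each has size at least $\kappa(G')\ge(1-16\delta)\alpha$. Summing, the layers already account for almost all of $n$, so by a counting argument one finds seven consecutive layers $U_i,\ldots,U_{i+6}$, with $i\le \eta n/\alpha$, each of size at most $(1+\eta)\alpha$. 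Between consecutive such layers there are matchings of size $(1-O(\eta))\alpha$, and almost every vertex lies on a full length-$6$ transversal path through the matchings. Inside this rigid $7$-layer slab one then rebuilds the chord-carrying path \emph{from scratch} by the same triangle/$4$-cycle dichotomy as in \Cref{lem:path-chords}, always routing back to the middle layer along the matching paths; this produces $P_0$ with $\delta n/\alpha$ short chords and total length $O(\delta n/\alpha)$, and $P_1$ is obtained by connecting both ends of $P_0$ back to $x$ via shortest paths of length $\le i+O(1)\le O(\eta n/\alpha)$. The total length is then $O(\eta n/\alpha)\le n/\alpha$, which is exactly the bound you could not reach. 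The missing idea, in short, is: when the shortest-path bound is tight, the layer structure is so saturated that one can abandon the original $P_0$ and manufacture a new one living in a few thin layers close to $x$.
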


		We will also use the following lemma, about extending a given path by a relatively small amount.

		\begin{lemma}\label{lem:mid-range}
			Let $\alpha, r, \ell, n$ be positive integers satisfying: $\alpha$ is large; $\max\{4\sqrt{\ell}, 32 \sqrt{\alpha}\} < r \le \min\{2\ell, n/\alpha, \alpha\}$; and $\ell \le \frac{n}{2}$.
			Suppose that $G$ is a graph on $n$ vertices, satisfying $\alpha(G) \le \alpha$ and $\kappa(G) \ge \alpha/2$, and that $P$ is a path of length $\ell$ in $G$. Then there is a path $P'$ with the same ends as $P$ that satisfies $|P| < |P'| \le |P| + r$.
		\end{lemma}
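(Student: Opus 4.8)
The plan is to obtain $P'$ by \emph{rerouting part of $P$ through vertices outside $P$}. Write $P = v_0\dots v_\ell$ with ends $x = v_0$, $y = v_\ell$, and set $W := V(G)\setminus V(P)$, so that $|W| \ge n-\ell-1 \ge n/2-1$ since $\ell \le n/2$. It suffices to produce $0\le i\le j\le \ell$ and a $v_i$--$v_j$ path $Q$ whose interior lies inside $W$ with $1 \le \ell(Q)-(j-i) \le r$: then $P' := P_{x\to v_i}\,Q\,P_{v_j\to y}$ has the same ends as $P$ and $|P'| = |P| + \bigl(\ell(Q)-(j-i)\bigr) \in \bigl[\,|P|+1,\ |P|+r\,\bigr]$. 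Two consequences of the hypotheses that I expect to use repeatedly: from $r > 4\sqrt\ell$ and $r \le \alpha$ we get $\ell < \alpha^2/16$ (so $\sqrt\ell < \alpha/4$), and from $r \le 2\ell$, $r \le n/\alpha$ we get $\ell \ge r/2 > 16\sqrt\alpha$ and $n \ge r\alpha \ge 32\alpha^{3/2}$; in particular $W$ is very large.

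I would then use the structure of $G[W]$ to locate a rerouting detour. Taking one vertex from each component of $G[W]$ gives an independent set of $G$, so $G[W]$ has at most $\alpha$ components; fix a largest one, $H$, so $|H| \ge |W|/\alpha$ is large, and since $N(V(H)) \subseteq V(P)$ and $\kappa(G) \ge \alpha/2$, the set $N_H$ of vertices of $P$ with a neighbour in $H$ has $|N_H| \ge \alpha/2$ (the degenerate alternative $V(H)\cup N_H = V(G)$ forces $V(P)$ to be small, and is handled by the same routing argument applied directly). A pigeonhole over sub-paths of $P$ — there are fewer than $\alpha/2$ of them if we take length $\lceil 4\ell/\alpha\rceil$, and $4\ell/\alpha < \sqrt\ell < r/4$ — produces two positions $v_i, v_j \in N_H$ that are close on $P$, at distance less than $r/4$, and joined through the connected set $H$. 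Routing the detour $Q$ between them is then done through a piece of $W$ that is chosen to be simultaneously of bounded diameter (to keep $\ell(Q) \le r$) and rich enough to carry a path strictly longer than $j-i$ — here the bound $\alpha(G) \le \alpha$, together with $|W|$ being large, is what prevents the relevant piece from being path-poor, and a finer pigeonhole over consecutive BFS-layer blocks of $H$ (there are few of them, since $H$ has diameter $O(\alpha)$) keeps the anchor distance of the same order as the size of the piece. In the regime where $\ell$ is comparatively large one can alternatively first produce, by a single such reroute (ignoring overshoot), an $xy$-path strictly longer than $P$, and then repeatedly apply \Cref{cor:shortening-indep} to trim it: each trimming step removes at most $\lceil 20\alpha^2/(\text{current length})\rceil \le \lceil 20\alpha^2/\ell\rceil$ vertices, which is at most $r$ exactly when $\ell$ is large enough, so one stops at a length in $(\ell,\ell+r]$.

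The step I expect to be the main obstacle is controlling the overshoot, i.e.\ guaranteeing a detour of length in the narrow window $\bigl(j-i,\ j-i+r\bigr]$: it must be \emph{strictly} longer than the removed segment — so a common neighbour of $v_i, v_j$ in $W$, which gives only a detour of length $2$, does not help once $j-i \ge 2$ — and yet it must not overshoot by more than $r$, while $H$ itself may have diameter as large as $\approx 2\alpha > r$. Reconciling these two requirements is precisely where the thresholds $r > 4\sqrt\ell$ and $r > 32\sqrt\alpha$ are consumed: they are what make the achievable anchor distance on $P$ (governed by how finely $N_H$ can be spread, hence by $\ell$ and $\alpha$) compatible with the available length budget inside $W$ (governed by the diameter and size of the piece of $W$ one routes through). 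Everything else — the pigeonholing, the structural facts about connected graphs of bounded independence number, and the length bookkeeping — is routine by comparison.
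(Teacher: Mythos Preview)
Your outline identifies the right target --- reroute a short segment of $P$ through $W$ --- and the pigeonhole producing two anchors $v_i,v_j\in N_H$ at distance $<r/4$ on $P$ is fine. But the step you yourself flag as ``the main obstacle'' is genuinely unresolved, and I do not see how to close it along the lines you sketch. Once $v_i,v_j$ are fixed, you need a $v_i$--$v_j$ path through $H$ of length in the window $(j-i,\,j-i+r]$. Nothing in your argument forces this: the shortest such path may be too short (a common neighbour gives length~$2$, useless once $j-i\ge 2$), while the only general upper bound on the diameter of $H$ is about $2\alpha$, which can exceed $r$ by a factor of order $\sqrt\alpha$. Your ``BFS-layer block'' idea does not help as stated: two vertices in layers $L_k,L_{k'}$ of a BFS from some root are at distance at most $k+k'$ in $H$, not at most $|k-k'|$, so restricting to a block of $r/4$ consecutive layers does not bound the in-$H$ distance by $O(r)$ unless the block sits near the root. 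Your fallback --- overshoot arbitrarily and trim via \Cref{cor:shortening-indep} --- requires each trimming step to remove at most $r$ vertices, i.e.\ $\lceil 20\alpha^2/\ell\rceil\le r$; but in the small-$\ell$ regime (say $\ell\approx r/2\approx 16\sqrt\alpha$) one has $20\alpha^2/\ell$ of order $\alpha^{3/2}\gg r$, and the hypothesis $|P|>4\alpha$ of that corollary need not hold either.

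The paper closes exactly this gap with an idea absent from your proposal: rather than working inside an arbitrary component of $G[W]$, it invokes the cycle--complete Ramsey bound (\Cref{thm:ramsey-keevash}) to find in $W$ a cycle $C$ of length \emph{exactly} $r/2$. This prescribed length is what supplies the missing control. One then takes $r/2$ vertex-disjoint paths from $C$ to $P$ (using $\kappa(G)\ge\alpha/2\ge r/2$) and splits into two cases. If at least $r/4$ of these connecting paths have length $\le r/4$, two of them land within $r/4$ of each other on $P$ by pigeonhole, and routing around the longer arc of $C$ gives a detour of length between $r/4+2$ and $r$ --- automatically longer than the removed segment. If instead at least $r/4$ of them have length $\ge r/4$, one packs $\ge 32\alpha/r$ of their $P$-endpoints into a window of length $r/8$, takes the even-level vertices at heights in $[r/8,r/4]$ along each (induced) connecting path, and uses $\alpha(G)\le\alpha$ to find an edge between two of them; this yields a detour whose length is pinned to the interval $(j-i,\,j-i+r]$ by construction. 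The cycle $C$ is precisely the length-calibrated gadget your sketch lacks.
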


		We first show how to prove \Cref{lem:middle-range} using these lemmas, and then we prove both lemmas.

	\subsection{Proof of middle range lemma}

		Using the above lemmas, the proof of \Cref{lem:middle-range} is straightforward, albeit somewhat technical.

		\begin{proof}[Proof of \Cref{lem:middle-range}]
			Let $\eta$ be a constant satisfying $\delta \ll \eta \ll 1$.
			Apply \Cref{lem:n-over-alpha} with $\eta$ to find paths $P_0$ and $P_1$ with the same ends, denoted $x,y$, and disjoint interiors, such that: $\ell(P_0) \le 7\eta n/\alpha$; $\ell(P_1) \ge \eta n / \alpha$; $\ell(P_0) + \ell(P_1) \le n/\alpha$; and $P_0$ has $\eta n / \alpha$ non-intersecting chords of length $2$ or $3$, with at least one of length $2$.
			Consider the graph $G_1$, obtained from $G$ by removing the interior vertices of $P_0$.
			Then $\alpha(G_1) \le \alpha$, $|G_1| \ge n/2$, and, since $\ell(P_0) \le 7\eta n / \alpha \le \alpha/2$ (using $\alpha \ge \sqrt{n}/2$), we also have $\kappa(G) \ge \alpha/2$. Write $\ell' = \ell - \ell(P_0)$ and $r = \eta n / 2\alpha$.
			Let $P_2$ be a longest $xy$-path in $G_1$ of length at most $\ell' + r$. 

			\begin{claim}
				$\ell(P_2) \ge \ell'$.
			\end{claim}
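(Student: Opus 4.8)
The claim is a ``long-path'' statement: we have a path $P_2$ chosen to be a longest $xy$-path in $G_1$ of length at most $\ell' + r$, and we want to rule out $\ell(P_2) < \ell'$. The natural strategy is to suppose for contradiction that $\ell(P_2) < \ell'$ and then show we can build a strictly longer $xy$-path that is still of length at most $\ell' + r$, contradicting maximality. The engine for lengthening a path by a small controlled amount is exactly \Cref{lem:mid-range}. So the core of the argument is: check that the hypotheses of \Cref{lem:mid-range} apply to $G_1$ and to $P_2$, apply it to get an $xy$-path $P_2'$ with $|P_2| < |P_2'| \le |P_2| + r'$ for an appropriate $r'$, and verify that $\ell(P_2') \le \ell' + r$, so that $P_2'$ is an admissible competitor beating $P_2$.

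**Verifying the hypotheses of \Cref{lem:mid-range}.** We already noted $\alpha(G_1) \le \alpha$ and $\kappa(G_1) \ge \alpha/2$, and $|G_1| \ge n/2 =: n_1$ (so $n_1 \ge n/2$; actually we should be a little careful and just use the true value $n_1 = |G_1|$, which satisfies $n/2 \le n_1 \le n$). We apply \Cref{lem:mid-range} with the path $P = P_2$, whose length is $\ell(P_2)$, with $n$ replaced by $n_1$, $\alpha$ unchanged, and with the lengthening parameter $r_{\ref{lem:mid-range}}$ chosen appropriately. The conditions to check are: $\ell(P_2) \le n_1/2$; and there is a valid choice of $r_{\ref{lem:mid-range}}$ with $\max\{4\sqrt{\ell(P_2)},\, 32\sqrt{\alpha}\} < r_{\ref{lem:mid-range}} \le \min\{2\ell(P_2),\, n_1/\alpha,\, \alpha\}$. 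For the first condition, under the contradiction hypothesis $\ell(P_2) < \ell' \le \ell \le \delta(n/\alpha)^2$, and using $\alpha \ge \sqrt n/2$ we get $\ell \le \delta(n/\alpha)^2 \le 4\delta n \le n/4 \le n_1/2$, so $\ell(P_2) \le n_1/2$ holds comfortably. For the window on $r_{\ref{lem:mid-range}}$: the lower endpoints are of order $\sqrt{\ell}$ and $\sqrt\alpha$, both at most (a constant times) $\sqrt{n/\alpha}\cdot\mathrm{poly}(\delta)$-ish, while the upper endpoints $n_1/\alpha \ge n/2\alpha$ and $\alpha \ge \sqrt n/2$ are far larger (since $n/\alpha$ is large and $\alpha \le \delta n^{2/3}$ forces $n/\alpha \ge n^{1/3}/\delta$); one also needs $\ell(P_2) \ge \ell' - 1 \ge \eta n/\alpha - \ell(P_0) - 1$ to be not-too-small so that $2\ell(P_2)$ is not a binding upper constraint — this is where one uses $\ell(P_1) \ge \eta n/\alpha$ hence $\ell' = \ell - \ell(P_0) \ge \ell(P_1) \cdot(\text{something})$; more directly, $\ell \ge n/\alpha$ from the hypothesis of \Cref{lem:middle-range} gives $\ell' \ge n/\alpha - 7\eta n/\alpha \ge n/2\alpha$, so $2\ell(P_2)$ is of order $n/\alpha$, same as $n_1/\alpha$. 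Hence one can pick, say, $r_{\ref{lem:mid-range}} := \min\{\ell(P_2),\, \alpha,\, n_1/\alpha\}$, or more simply just take $r_{\ref{lem:mid-range}}$ to be $r = \eta n/(2\alpha)$ itself after checking $r < \min\{2\ell(P_2), n_1/\alpha, \alpha\}$ and $r > \max\{4\sqrt{\ell(P_2)}, 32\sqrt\alpha\}$, both of which follow from $\delta \ll \eta \ll 1$, from $\alpha$ and $n/\alpha$ being large, and from $\ell(P_2) \le \ell \le \delta(n/\alpha)^2$ giving $4\sqrt{\ell(P_2)} \le 4\sqrt\delta \cdot n/\alpha < \eta n/(2\alpha)$.

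**Closing the contradiction.** With $r_{\ref{lem:mid-range}} \le r$ chosen as above, \Cref{lem:mid-range} produces an $xy$-path $P_2'$ in $G_1$ with $\ell(P_2) < \ell(P_2') \le \ell(P_2) + r_{\ref{lem:mid-range}} \le \ell(P_2) + r$. Since by the contradiction hypothesis $\ell(P_2) < \ell'$, and $\ell(P_2')$ exceeds $\ell(P_2)$ by at most $r$, we still have $\ell(P_2') \le \ell(P_2) + r \le (\ell' - 1) + r < \ell' + r$ — wait, more carefully: we need $\ell(P_2') \le \ell' + r$; from $\ell(P_2) \le \ell' - 1$ and $\ell(P_2') \le \ell(P_2) + r$ we get $\ell(P_2') \le \ell' - 1 + r \le \ell' + r$, so $P_2'$ is an admissible $xy$-path of length at most $\ell' + r$ but strictly longer than $P_2$, contradicting the choice of $P_2$ as a longest such path. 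This contradiction shows $\ell(P_2) \ge \ell'$, proving the claim.

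**Main obstacle.** The only genuinely fiddly point is the bookkeeping needed to land $r_{\ref{lem:mid-range}}$ strictly inside the admissible window $\bigl(\max\{4\sqrt{\ell(P_2)}, 32\sqrt\alpha\},\ \min\{2\ell(P_2), n_1/\alpha, \alpha\}\bigr]$ — in particular making sure the window is nonempty, which requires simultaneously that $\ell(P_2)$ is large enough (so $4\sqrt{\ell(P_2)} \ll 2\ell(P_2)$ and $\ll n_1/\alpha$), that $\alpha$ is large (so $32\sqrt\alpha \ll \alpha$), and that $\ell(P_2)$ is not so large that $\ell(P_2) > n_1/2$. All of these hold under the standing hierarchy $\delta \ll \eta \ll 1$ together with $\alpha, n/\alpha$ large and the range constraints $n/\alpha \le \ell \le \delta(n/\alpha)^2$, $\sqrt n/2 \le \alpha \le \delta n^{2/3}$, so the obstacle is purely one of careful inequality-chasing rather than a conceptual difficulty.
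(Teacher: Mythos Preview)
Your approach is exactly the paper's: suppose $\ell(P_2) < \ell'$, apply \Cref{lem:mid-range} to $G_1$ and $P_2$ with parameter $r = \eta n/(2\alpha)$, obtain a strictly longer $xy$-path still of length at most $\ell'+r$, and contradict maximality. Most of your hypothesis-checking matches the paper's verification line by line.

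There is, however, one genuine gap. To apply \Cref{lem:mid-range} you must have $r \le 2\ell(P_2)$, i.e.\ a \emph{lower} bound on $\ell(P_2)$. Your argument here derives a lower bound on $\ell'$ (namely $\ell' \ge n/\alpha - 7\eta n/\alpha \ge n/(2\alpha)$) and then asserts ``so $2\ell(P_2)$ is of order $n/\alpha$''. But the contradiction hypothesis is $\ell(P_2) < \ell'$, which is an \emph{upper} bound on $\ell(P_2)$; knowing $\ell'$ is large says nothing about $\ell(P_2)$ from below. The missing step, which the paper supplies, is to use the maximality of $P_2$: the path $P_1$ is an $xy$-path in $G_1$ (its interior is disjoint from that of $P_0$), and $\ell(P_1) \le n/\alpha - \ell(P_0) \le \ell - \ell(P_0) = \ell' \le \ell' + r$, so $P_1$ is a valid competitor in the maximisation defining $P_2$. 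Hence $\ell(P_2) \ge \ell(P_1) \ge \eta n/\alpha$, and then $2\ell(P_2) \ge 2\eta n/\alpha > \eta n/(2\alpha) = r$. You do mention ``this is where one uses $\ell(P_1) \ge \eta n/\alpha$'', but you never actually invoke the maximality of $P_2$ against $P_1$ to convert this into a lower bound on $\ell(P_2)$; without that step the inequality $r \le 2\ell(P_2)$ is unjustified. Once this is fixed, your proof coincides with the paper's.
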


			\begin{proof}
				Suppose not.
				Then apply \Cref{lem:mid-range} with $\alpha$, $r$, $\ell_{\ref{lem:mid-range}} = \ell(P_2)$ and $n_{\ref{lem:mid-range}} = |G_1|$. To see that the lemma is applicable, notice that, by assumption, $\ell(P_2) \le \ell' \le \ell \le n/4 \le |G_1|/2$, and $\kappa(G_1) \ge \alpha/2$.
				Moreover, we now verify that $\max\{4\sqrt{\ell''}, 32\sqrt{\alpha}\} \le r \le \min\{2\ell'', |G_1|/\alpha, \alpha\}$, where $\ell'' = \ell(P_2)$:
				\begin{itemize}
					\item
						$4\sqrt{\ell''} \le 4 \sqrt{\ell} \le 4 \sqrt{\delta} n/\alpha \le \eta n / 2\alpha = r$, using the assumption $\ell'' \le \ell'$ and that $\ell' \le \ell$.
					\item
						$\frac{32 \sqrt{\alpha}}{r} = \frac{64\alpha^{3/2}}{\eta n} \le \frac{64 \delta^{3/2}}{\eta} \le 1$, using $\alpha \le \delta n^{2/3}$, and showing $32\sqrt{\alpha} \le r$.
					\item
						$2\ell'' \ge 2\ell(P_1) \ge 2\eta n /\alpha \ge r$, by maximality of $P_2$.
					\item
						$\frac{|G_1|}{\alpha} \ge \frac{n}{2\alpha} \ge \frac{\eta n}{2\alpha} = r$.
					\item
						$\frac{\alpha}{r} = \frac{2\alpha^2}{\eta n} \ge \frac{1}{2\eta} \ge 1$, using $\alpha \ge \sqrt{n}/2$, and showing $\alpha \ge r$.
				\end{itemize}
				This shows that the lemma is indeed applicable. It implies the existence of an $xy$-path $P_2'$ with $\ell(P_2) < \ell(P_2') \le \ell(P_2) + r \le \ell' + r$, contradicting the maximality of $P_2$.
			\end{proof}

			Note that $\ell - \ell(P_2) \le \ell - \ell' = \ell(P_0)$ (by the claim) and $\ell - \ell(P_2) \ge \ell - \ell' - r = \ell(P_0) - \eta n / 2\alpha$ (by choice of $P_2$).
			Recalling that $P_0$ has $\eta n / \alpha$ non-intersecting chords of length $2$ or $3$, with at least one of length $2$, by \Cref{prop:chords} there is an $xy$-path $P_0'$ of length $\ell - \ell(P_2)$ with $V(P_0') \subseteq V(P_0)$. The concatenation of $P_0'$ and $P_2$ is a cycle of length exactly $\ell$.
		\end{proof}

	\subsection{Finding a short cycle with many short chords}
		\def \Pi {P_{\mathrm{init}}}

		We now prove \Cref{lem:n-over-alpha}. We start by taking a path $\Pi$ which is relatively short yet not too short, and has many chords of length $2$ or $3$, with at least one of length $2$; denote its ends by $x,z$.
		Now we consider the graph $G'$ obtained by removing the interior of $\Pi$, and observe that it has connectivity at least a little below $\alpha$. If the distance between $x$ and $z$ in $G'$ is sufficiently smaller than $n/\alpha$, we are done. If not, then due to space considerations, for almost all $i \le \alpha$, there are roughly $n/\alpha$ vertices at distance exactly $i$ from $x$ in $G'$. In particular, there is a relatively small $i$ such that the number of vertices at distance $j$ from $x$ in $G'$ is indeed roughly $n/\alpha$, for $j \in [i, i+6]$. We then focus on the vertices at distance between $i$ and $i+6$ from $x$ in $G'$, and repeat arguments similar to the ones used in \Cref{lem:path-chords}, to find the desired paths.

		\begin{proof}[Proof of \Cref{lem:n-over-alpha}]

			By \Cref{lem:path-chords}, there is a path $Q_1$ of length at most $\frac{3\delta n}{\alpha}$ with at least $\frac{\delta n}{\alpha}$ non-intersecting chords of length $2$ or $3$, at least one of which having length $2$. Denote the ends of $Q_1$ by $x, y$. By the connectivity assumption, there is a path $Q_2$ of length $\frac{\delta n}{\alpha}$, one of whose ends is $y$, and which otherwise does not intersect $Q_1$. Let $z$ be the other end of $Q_2$. Write $\Pi = x Q_1 y Q_2 z$; so $\ell(\Pi) \le \frac{4\delta n}{\alpha} \le 16 \delta \alpha$, using $n \le 4\alpha^2$. 

			Consider the graph $G'$, obtained from $G$ by removing the interior vertices of $\Pi$. Write 
			\begin{equation*}
				\kappa = (1 - 16\delta)\alpha,
				\qquad
				\qquad
				\ell = (1 - 4\delta)\cdot\frac{n}{\alpha},
			\end{equation*}
			and notice that $\kappa(G') \ge \alpha - |\Pi| \ge \kappa$. If $G'$ has an $xz$-path $Q_3$ of length at most $\ell$, we are done (take $P_0 = Q_1$ and $P_1 = yQ_2zQ_3x$), so suppose it does not.

			Let $U_i$ be the set of vertices at distance exactly $i$ from $x$ in $G'$. Notice that $U_i$ is non-empty for $i \le \ell$. By connectivity of $G'$, we have $|U_i| \ge \kappa$ for $i \in [\ell-1]$. Moreover, there is a matching of size at least $\kappa$ between $U_i$ and $U_{i+1}$, for $i \in [\ell-2]$. We claim that there exists $i$ with $3 \le i \le \frac{\eta n}{\alpha}$ such that the sets $U_{i}, U_{i+1}, \ldots, U_{i+6}$ all have size at most $(1 + \eta)\alpha$. Indeed, otherwise, we get
			\begin{align*}
				\sum_{i \in [\ell]} |U_i| 
				& \ge \kappa \left(\ell - \frac{\eta n}{\alpha}\right) + \left(\frac{\eta n}{7\alpha} - 1\right)\cdot \left(6 \kappa + (1 + \eta)\alpha\right) \\
				& \ge \kappa \ell - \frac{\kappa \eta n}{7 \alpha} + \frac{\eta n}{7} \cdot (1 + \eta) - 7(1 + \eta)\alpha \\
				& = \kappa \ell + \frac{\eta n}{7}\cdot \big(1 + \eta - (1 - 16\delta)\big) - 7(1 + \eta)\alpha \\
				& \ge (1 - 16\delta)(1 - 4\delta)n + \frac{\eta^2n}{7} - 8\alpha
				\ge \left(1 - 20\delta + \frac{\eta^2}{7} - \frac{8}{n/\alpha}\right)n > n,
			\end{align*}
			a contradiction. 
			For the first estimate we lower bounded $|U_i|$ by $\kappa$ for $i > \eta n / \alpha$, and divided the range $[3, \eta n / \alpha]$ into at least $\eta n / 7 \alpha - 1$ intervals of length $7$, and then lower bounded the total length of $|U_i|$ over each of these intervals by $6\kappa + (1 + \eta)\alpha$, which was possible due to the assumption that in each such interval there is a set $U_i$ of size at least $(1 + \eta)\alpha$.
			For the second inequality we used $\kappa \le \alpha$, and for the last estimate we used that $n/\alpha$ is large. 

			Fix an $i$ as above. Let $M_j$ be a matching of size $\kappa$ between $U_{i+j-1}$ and $U_{i+j}$, for each $j \in [6]$, and denote the union of these matchings by $F$. Let $U_{i+j}'$ be the set of vertices in $U_{i+j}$ that are in a path of length $6$ in this union. We claim that $|U_{i+j}'| \ge (1 - 13\eta)\alpha$. To see this, consider the set of vertices $U_{i+k}''$ in $U_{i+k}$ which are in paths of length $k-1$ in $F$ that start in $U_{i}$. Then $|U_i''| \ge \kappa \ge (1 - \eta)\alpha$, and $|U_{i+j}''| \ge |U_{i+j-1}''| - (|U_{i+j}| - \kappa) \ge |U_{i+j-1}''| - 2\eta \alpha \ge (1 - (2j+1)\eta)\alpha$. Each vertex in $U_{i+6}''$ is in a path of length $6$. Thus $|U_{i+j}'| \ge |U_{i+6}''| \ge (1 - 13\eta)\alpha$, for $j \in [0,6]$. 

			Rename $W_j = U_{i+j}'$ for $j \in [0, 6]$, and write $W = W_0 \cup \ldots \cup W_6$. Notice that every vertex in $W_1 \cup \ldots \cup W_5$ has at least $\kappa - 14\eta\alpha \ge \alpha/2$ neighbours in $W$.
			For a vertex $w \in W_j$ let $\wp$ be the $F$-neighbour of $w$ in $W_{j+1}$, for $j \in [0,5]$, and let $\wm$ be the $F$-neighbour of $w$ in $W_{j-1}$, for $j \in [6]$. Moreover, let $P(w)$ be the unique path through $w$ from $W_0$ to $W_6$ in $F$.
			Write $\ell' = \frac{\delta n}{\alpha}$.
			\begin{claim}
				There is a sequence of paths $P_0, \ldots, P_{2\ell'}$ with the following properties.
				\begin{enumerate}[label = \rm(\alph*)]
					\item \label{itm:n-alpha-disj}
						The paths $P_i$ are pairwise vertex-disjoint, except that the start point of $P_{i}$ is the end point of $P_{i-1}$, for $i \in [2\ell']$.
					\item \label{itm:n-alpha-len}
						$|P_i| \le 8$.
					\item \label{itm:n-alpha-chord}
						$P_i$ has a chord of length $2$ or $3$ for $i \in [0,2\ell']$ ($2$ if $i = 0$).
					\item \label{itm:n-alpha-end}
						The end point of $P_i$ is in $W_1 \cup \ldots \cup W_5$.
					\item \label{itm:n-alpha-W}
						$V(P_i) \subseteq W$ for $i \in [2\ell']$. If $V(P_0)$ is not fully contained in $W$, then it starts with a vertex in $\Pi$, such that the subpath of $\Pi$ between $x$ and $P_0$'s start point does not contain other vertices of $P_0$.
				\end{enumerate}
			\end{claim}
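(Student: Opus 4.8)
The plan is to build the paths $P_0, P_1, \ldots, P_{2\ell'}$ greedily, one after another, in the spirit of the construction in \Cref{lem:path-chords}, maintaining the invariant that the endpoint of each $P_i$ lies in the central layer $W_3$. The running bookkeeping is the set $S$ of all vertices used so far. Since each path will have order at most $6$ and there are $2\ell'+1$ of them, which by $\ell' = \delta n/\alpha$ and $n \le 4\alpha^2$ is at most $100\delta\alpha$, say, at every stage $|S| \le 600\delta\alpha$, which is negligible next to $\alpha$. Two structural facts are used throughout. First, since $W_j \subseteq U_{i+j}$ and the $U_k$ are the BFS layers of $G'$ from $x$, every edge of $G'$ with both ends in $W$ joins vertices in equal or consecutive layers; in particular every vertex of $W_3$ has all of its $W$-neighbours inside $W_2 \cup W_3 \cup W_4$. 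Second, $F$ is a vertex-disjoint union of threads, each a path on $7$ vertices meeting every $W_j$ exactly once; at most $|S|$ threads meet $S$, so, calling a thread \emph{fresh} if it is disjoint from $S$, all but $O(\delta\alpha)$ threads are fresh, and the $F$-edges allow us to move from a vertex to its neighbour in an adjacent layer along its own (fresh) thread.

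For the inductive step, suppose $P_0, \ldots, P_k$ are built and $w \in W_3$ is the endpoint of $P_k$ (so $w \in S$). Let $N$ be the set of neighbours of $w$ in $W \setminus S$ lying on a fresh thread; using the lower bound $\kappa - 14\eta\alpha$ on $W$-degrees together with the budget, $|N| \ge (1 - 15\eta)\alpha$ — crucially this is comfortably above $\alpha/2$ — and $N \subseteq W_2 \cup W_3 \cup W_4$ by the layering fact. Now mimic the trichotomy of \Cref{lem:path-chords}. (i) If $N$ spans an edge $ab$, then $wab$ is a triangle and $P_{k+1} := wab$ has the chord $wb$ of length $2$. (ii) Otherwise pick $w' \in N$ and let $N'$ be the set of neighbours of $w'$ in $W \setminus S$ on fresh threads; again $|N'| \ge (1 - 15\eta)\alpha$ and $N' \subseteq W_1 \cup \cdots \cup W_5$. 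If $N'$ spans an edge $cd$, then $w'cd$ is a triangle and $P_{k+1} := w\,w'\,c\,d$ has the chord $w'd$ of length $2$. (iii) Otherwise $N$ and $N'$ are disjoint — a common vertex would be a neighbour of both $w$ and $w'$, hence an edge inside $N$ — so $|N \cup N'| \ge (2 - 30\eta)\alpha > \alpha$; removing $w'$, $(w')^+$ and $(w')^-$ still leaves a set of size larger than $\alpha$, which thus spans an edge $e_1 e_2$, and by independence of $N$ and of $N'$ this edge crosses, say $e_1 \in N$ and $e_2 \in N'$, whence $P_{k+1} := w\, e_1\, e_2\, w'$ has the chord $ww'$ of length $3$. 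In each case $P_{k+1}$ has order at most $4$, all of its vertices other than $w$ are fresh, and its last vertex lies in $W_1 \cup \cdots \cup W_5$; appending, if that last vertex is not already in $W_3$, one $F$-step along its own fresh thread returns the endpoint to $W_3$ at the cost of one further fresh vertex. Hence $|P_{k+1}| \le 5$, so \ref{itm:n-alpha-len} holds; $P_{k+1}$ is vertex-disjoint from $P_0, \ldots, P_{k-1}$ and meets $P_k$ only in $w$, so \ref{itm:n-alpha-disj} holds; its endpoint lies in $W_3$, so \ref{itm:n-alpha-end} holds; its chord has length $2$ or $3$, so \ref{itm:n-alpha-chord} holds for $i \ge 1$; and all its vertices lie in $W$, so \ref{itm:n-alpha-W} holds for $i \ge 1$.

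The base path $P_0$ is the only one required to have a chord of length exactly $2$, so we must land in case (i) or (ii), i.e.\ we need a triangle reachable from a start vertex of our choosing. If $G'[W]$ contains a triangle we begin $P_0$ there — its vertices lie in two consecutive layers, and at most three further $F$-steps along fresh threads route the endpoint to $W_3$, giving $V(P_0) \subseteq W$. If $G'[W]$ is triangle-free (which can genuinely happen, as the layered graph $G'[W]$ may be essentially bipartite) we instead use the fact that $Q_1 \subseteq \Pi$ is guaranteed by \Cref{lem:path-chords} to contain a chord of length $2$, that is, a triangle on $\Pi$; we take $P_0$ to start at the appropriate end of this triangle and route it into $W$. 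The self-intersection condition on the $x$-to-start subpath of $\Pi$ in \ref{itm:n-alpha-W} is arranged by choosing which end of the triangle is the start, and this is exactly the situation clause \ref{itm:n-alpha-W} is designed to cover.

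The main obstacle is not conceptual but lies in the numerics and bookkeeping: one must work with the true near-$\alpha$ lower bound on $W$-degrees rather than the convenient weak bound $\alpha/2$, so that two disjoint neighbourhoods comfortably exceed $\alpha(G)$ and hence span an edge, and one must track both the $O(\delta\alpha)$ used vertices and the threads meeting them, verifying that at every one of the $\Theta(\delta\alpha)$ steps there remain enough fresh vertices on fresh threads both to produce the chord-creating configuration and to slide the endpoint back to $W_3$ without re-using a vertex already placed on $P_{k+1}$. The treatment of $P_0$ in the degenerate case where $G'[W]$ carries no triangle, and in particular making its interface with $\Pi$ precise, is the other point requiring care.
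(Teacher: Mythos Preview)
Your inductive step follows the same trichotomy as the paper, but with the thread-walk and the chord-creation swapped in order: the paper, starting from an endpoint $u\in W_1\cup\cdots\cup W_5$, first jumps to a neighbour $v$ on a fresh thread, walks along that thread to $W_3$, and only then performs the triangle/4-cycle search from there; you perform the search first and then try to slide back to $W_3$. Both orders are viable, but your bookkeeping slips: in case~(ii) the terminal vertex $d$ can lie in $W_1$ or $W_5$, so a single $F$-step does not reach $W_3$ and your invariant ``endpoint in $W_3$'' is not maintained. This matters, because once the endpoint drifts to $W_1$ (say), the next $w'$ may land in $W_0$, where you have no lower bound on $W$-degree, and the argument unravels. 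The fix is easy --- allow up to two $F$-steps, giving $|P_{k+1}|\le 6$ rather than $5$ --- but your stated bound $|P_{k+1}|\le 5$ is wrong as written. You also do not check that the post-trichotomy $F$-steps avoid the just-chosen vertices $a,b,c,d$ (they may share a thread with the endpoint); the paper's ordering sidesteps this by fixing the fresh thread first and then choosing $w_1,w_2,w_3$ explicitly off it.

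The real gap is your base case. When $G'[W]$ is triangle-free you propose to use a triangle sitting on $\Pi$ (coming from a length-$2$ chord of $Q_1$) and ``route it into $W$''. But the interior of $\Pi$ has been deleted from $G'$, and there is no reason any vertex of that triangle has a neighbour in $W$ at all, let alone within the $\le 8$ vertex budget; nothing ties the layers $U_i,\ldots,U_{i+6}$ to the location of $\Pi$'s internal vertices in $G$. So condition~(d) cannot be met this way. The paper avoids the dichotomy entirely: it picks $u\in W_3$ and uses $\delta(G)>\alpha(G)$ to find a triangle $(uvw)$ in $G$ through $u$, then sets $P_0=vwu$. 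The endpoint is $u\in W_3$ automatically, and $v,w$, being $G$-neighbours of $u$, lie in $U_{i+2}\cup U_{i+3}\cup U_{i+4}\cup\Pi^\circ$; the $\Pi$ case is exactly what clause~(e) records. This is both simpler and robust --- no need to ask whether $G'[W]$ contains a triangle.
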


			\begin{proof}
				First, define $P_0$ as follows.
				Let $u \in W_3$. By assumption on $G$, the vertex $u$ lies in a triangle $(uvw)$. 
				If one of $v$ and $w$ is in $\Pi$, then without loss of generality $v$ is in $\Pi$ and is closer to $x$ than $w$ in $\Pi$ (if $w$ is also in $\Pi$).
				Define $P_0 = vwu$, and notice that $P_0$ satisfies the above requirements.

				Now suppose that $P_0, \ldots, P_{i-1}$ are defined and satisfy the above properties, where $i \in [\ell']$. Denote the end point of $P_{i-1}$ by $u$; then $u \in W_1 \cup \ldots \cup W_5$ by \ref{itm:n-alpha-W}.
				Let $W'$ be the set of vertices in $W$ which do not lie in any path $P(v)$, with $v \in V(P_0) \cup \ldots \cup V(P_i)$. Then $|W'| \ge |W| - 8 \ell' \ge |W| - \frac{\alpha}{4}$.
				Let $v$ be a neighbour of $u$ in $W'$, let $P'(v)$ be the subpath of $P(v)$ that starts at $v$ and ends in $W_3$, and let $w$ be the end of $P'(v)$. As in the proof of \Cref{lem:path-chords}, there exist $w_1, w_2, w_3 \in W' - V(P(v))$ such that one of the following holds: $(w w_1 w_2)$ is a triangle; $(w_1 w_2 w_3)$ is a triangle and $ww_1$ is an edge; or $(w w_1 w_2 w_3)$ is a $4$-cycle.
				Let $Q$ be $ww_1w_2$ in the first case, and $ww_1w_2w_3$ in the other two cases, and define $P_{i+1} = u v P'(v) w Q$. 
				\begin{figure}[h]
					\centering
					\includegraphics[scale = 1]{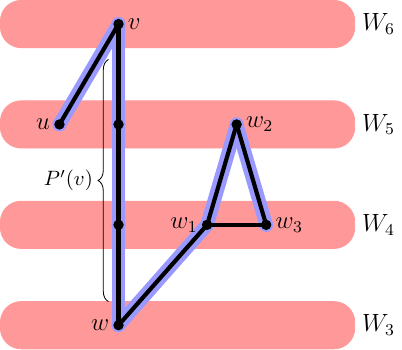}
					\caption{An illustration of how $P_i$ may look like}
					\label{fig:P-i}
				\end{figure}

				It is easy to check that $P_i$ satisfies the requirements. This completes the proof that paths $P_0, \ldots, P_{\ell'}$ as above exist.
			\end{proof}

			Define $P = P_0 \ldots P_{2\ell'}$, and denote the start point of $P$ by $u_0$ and the end point by $u_1$. 

			We define paths $Q_0$ and $Q_1$ between $x$ and $u_0$ and $u_1$.
			Let $i \in \{0,1\}$. If $u_i \in W$, let $v_i$ be a neighbour of $u_i$ such that $P(v_i)$ does not intersect $P(v)$ for any $v$ in $P$ (it is easy to see that such $v_i$ exists), and let $Q_i$ be the concatenation of a shortest path between $x$ and $v_i$ that does not intersect $P$ and the edge $v_iu_i$. Then $\ell(Q_i) \le \frac{2\eta n}{\alpha}$. If $u_i \notin W$, then $i = 0$ and $u_i \in V(\Pi)$, and we take $Q_0$ to be the subpath of $\Pi$ between $x$ and $u_0$. In this case we have $\ell(Q_0) \le \ell(\Pi) \le \frac{4\delta n}{\alpha}$.

			Thinking of $Q_0$ as a path from $x$ to $u_0$, let $x'$ be the last vertex in $Q_0$ which is also in $Q_1$ (notice that $x$ is in both $Q_0$ and $Q_1$, so this is well defined). Let $Q_0'$ be the subpath of $Q_0$ from $x'$ to $u_0$, and let $Q_1'$ be the subpath of $Q_1$ from $u_1$ to $x'$.
			The paths $P_0\ldots P_{\ell'-1}$ (for $P_0$) and $P_{\ell'}\ldots P_{2\ell'} Q_1 Q_0$ (for $P_1$) satisfy the requirements of the lemma.
			Indeed, the former has length at most $7\delta n/\alpha$ and at least $\delta n/\alpha$ non-intersecting chords of length $2$ or $3$, with at least one of length $2$, the latter has length at least $\delta n/\alpha$, and the total length is at most $(14\delta + 4\eta)n/\alpha \le n/\alpha$.
		\end{proof}
	\subsection{Extending middle range cycles}

		In this section we prove \Cref{lem:mid-range}. Here the task is to extend a given path $P$ by a small but positive amount.
		To do so, we use results about cycle-complete Ramsey numbers to find a cycle $C$ of specific length which is disjoint of $P$. We then use connectivity to find many pairwise vertex-disjoint paths between $P$ and $C$, one for each vertex of $C$. 
		If many of the paths are short, we can pick two that are close in $P$, and extend $P$ by going along these paths and along the longer subpath of $C$ that connects the ends of these paths in $P$ (see \Cref{fig:P'-case1}). Otherwise, we use the independence number to find an edge between two of the paths to get an appropriate extension (see \Cref{fig:P'-case2}).
		The proof here is reminiscent of the proof of Lemma 2.10 in \cite{draganic2023chvatal}, especially in the latter case, but the idea to use a cycle outside of $P$ is new and crucial for covering the full middle range.

		\begin{proof}[Proof of \Cref{lem:mid-range}]
			By \Cref{thm:ramsey-keevash}, the bound $r \ge 32 \sqrt{\alpha}$, and the assumption that $\alpha$ is large, we have the following bound on the cycle-complete Ramsey number: $r(C_{r/2}, K_{\alpha+1}) \le (r/2) \cdot \alpha \le n/2 \le n-\ell$. Since $\alpha(G) \le \alpha$, it follows that there is a cycle $C = (v_1 \ldots v_{r/2})$ of length $r/2$ which is vertex-disjoint of $P$.
			Since $\kappa(G) \ge \alpha/2 \ge r/2 = |C|$ and $|P| = \ell+1 \ge r/2$, there are $r/2$ pairwise vertex-disjoint paths from $C$ to $P$. Given such a collection of paths, denote the path starting with $v_i$ by $Q_i$, for $i \in [r/2]$, and let $u_i$ be the end of $Q_i$ in $P$. By assuming that the $Q_i$'s are as short as possible, the paths $Q_i$ are induced.
			We consider two cases: at least $r/4$ paths $P_i$ have length at most $r/4$; and at least $r/4$ paths have length at least $r/4$.

			Suppose that the former holds, and let $I$ be the set of indices $i \in [r/2]$ such that $\ell(Q_i) \le r/4$. Since $|I| \ge r/4$, there are distinct $i,j \in I$ such that the distance between $u_i$ and $u_j$ in $P$ is at most $\frac{\ell}{r/4} \le r/4$ (using $r \ge 4\sqrt{\ell}$). Let $P_1$ and $P_2$ be the two subpaths of $P$ obtained by removing the segment between $u_i$ and $u_j$, such that $u_i \in V(P_1)$ and $u_j \in V(P_2)$. Let $P_3$ be the longer subpath of $C$ with ends $v_i$ and $v_j$; so $\ell(P_3) \ge r/4$.
			Define $P'$ as follows (see \Cref{fig:P'-case1}).
			\begin{equation*}
				P' = P_1 u_i Q_i v_i P_3 v_j Q_j u_j P_2.
			\end{equation*}
			We claim that $P'$ satisfies the requirements. Indeed, notice that $P'$ has the same ends as $P$. Moreover, $\ell(P') \ge \ell(P) - r/4 + \ell(Q_i) + \ell(Q_j) + \ell(P_3) \ge \ell(P) + 2$, as at most $r/4$ edges of $P$ are skipped, the paths $Q_i$ and $Q_j$ each have length at least $1$, and $P_3$ has length at least $r/4$. Also, $\ell(P') \le \ell(P) + \ell(Q_i) + \ell(Q_j) + \ell(P_3) \le \ell(P) + r$, using $\ell(Q_i), \ell(Q_j) \le r/4$ and $\ell(P_3) \le r/2$. 
			\begin{figure}[h]
				\vspace{-.5cm}
				\begin{subfigure}[b]{.5\textwidth}
					\centering
					\includegraphics[scale = .8]{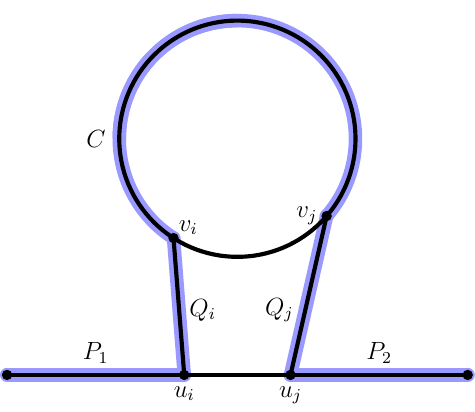}
					\caption{$P'$ when many $P_i$'s are short}
					\label{fig:P'-case1}
				\end{subfigure}
				\begin{subfigure}[b]{.5\textwidth}
					\centering
					\includegraphics[scale = .8]{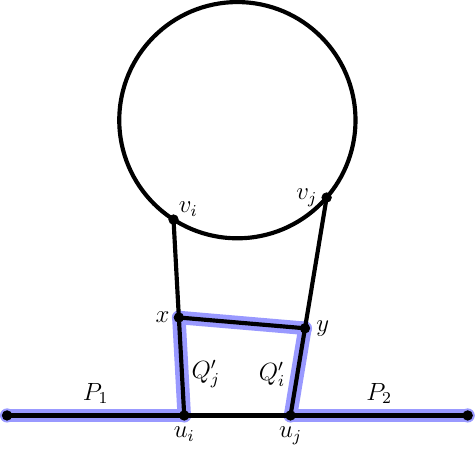}
					\caption{$P'$ when many $P_i$'s are long}
					\label{fig:P'-case2}
				\end{subfigure}
				\caption{the path $P'$}
			\end{figure}

			Now suppose that the latter case holds, namely at least $r/4$ of the paths $P_i$ have length at least $r/4$; let $I$ be the set of indices $i$ such that $\ell(P_i) \ge r/4$. We claim that there is a subpath of $P$ of length at most $r/8$ that contains at least $32\alpha/r$ vertices $u_i$, with $i \in I$.
			Indeed, otherwise
			\begin{equation*}
				\ell \le \left(\frac{\ell}{r/8} + 1\right) \cdot \frac{32\alpha}{r}
				< \frac{2^{9}\alpha}{r^2} \cdot \ell
				< \ell, 
			\end{equation*}
			a contradiction (using $r \ge 32\sqrt{\alpha}$).
			Let $J$ be a subset of $I$ of size at least $32\alpha/r$ such that the vertices $u_j$ with $j \in J$ are on a subpath of $P$ of length at most $r/8$. 
			For each $j \in J$, let $U_j$ be the set of vertices in $P_j$ whose distance from $u_j$ in $P_j$ is \emph{even} and between $r/8$ and $r/4$. Then $U_j$ is an independent set (as $P_j$ is an induced path) of size at least $r/16$. It follows that the union $\bigcup_{j \in J}U_j$ has size at least $(r/16) \cdot (32\alpha/r) > \alpha$. As $\alpha(G) \le \alpha$, there is an edge $xy$ in this union; say $x \in U_i$ and $y \in U_j$ (so $i \neq j$). Let $Q_i'$ be the subpath of $Q_j$ from $u_j$ to $x$, let $Q_j'$ be the subpath of $Q_j$ from $y$ to $u_j$, and let $P_1$ and $P_2$ be the two subpaths of $P$ obtained by removing the segment between $u_i$ and $u_j$, such that $u_i \in V(P_1)$. 
			Define a path $P'$ as follows (see \Cref{fig:P'-case2}).
			\begin{equation*}
				P' = P_1 u_i Q_i' xy Q_j' u_j P_2.
			\end{equation*}
			Again, we claim that $P'$ satisfies the requirements. Indeed, it has the same ends as $P$. Moreover, $\ell(P') \ge \ell(P) - r/8 + \ell(Q_i') + \ell(Q_j') + 1 > \ell(P)$ (using $\ell(Q_i'), \ell(Q_j') \ge r/8$), and $\ell(P') \le \ell(P) + \ell(Q_i') + \ell(Q_j') + 1 \le \ell(P) + r$ (using $\ell(Q_i'), \ell(Q_j') \le r/4$).
		\end{proof}
\section{Lower range} \label{sec:lower}

	In this section we prove \Cref{lem:lower-range}, restated here, about short cycles.
	\lemLowerRange*

	The general outline is very similar to that of the proof of the lower range in \cite{draganic2023chvatal}, but we need to work harder for each step.
	We find very short cycles (of length up to $7$) via ad-hoc arguments; for comparison, in \cite{draganic2023chvatal} the authors needed to find cycles of length up to $5$, and their proofs do not quite carry to our setting. 

	To find longer cycles in this range, our proof splits into two cases. When $n/\alpha \ge \delta \alpha$, we use results about cycle-complete Ramsey numbers (\Cref{thm:ramsey-erdos} and \Cref{thm:ramsey-keevash}), exactly as in \cite{draganic2023chvatal}. Otherwise, we use a result about the Tur\'an number of even cycles due to Bondy and Simonovits \cite{bondy1974cycles} (see \Cref{thm:even-cycle}). 
	This works directly for even lengths, but for odd lengths we need a correction. 
	\Cref{lem:odd-cycles} provides such a correction: it finds a large subgraph $H \subseteq G$ such that every edge $e$ in $H$ can be extended to a cycle of length $3$ or $5$ in $G$, whose vertices outside of $e$ are not in $H$. This is a variant of Lemma 2.7 in \cite{draganic2023chvatal}, where the stronger condition on the connectivity allowed the authors to find such $H$ where every edge $e$ is in a triangle in $G$ whose third vertex is outside of $H$.

	\subsection{Lemmas}

		For technical reasons, we will find very short cycles (of length at most $7$) separately. Since the proof is ad-hoc and quite technical, we present it in an appendix (see \Cref{sec:lower-short-cycles}).

		\begin{lemma} \label{lem:short-cycles}
			Let $G$ be a graph with $\delta(G) > \alpha(G) \gg 1$.
			Then $G$ contains a cycle of length $\ell$, for every $\ell \in \{3,\ldots,7\}$.
		\end{lemma}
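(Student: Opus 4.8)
The plan is to prove the existence of each cycle length $\ell \in \{3,4,5,6,7\}$ separately, using the hypothesis $\delta(G) > \alpha(G)$ repeatedly in the following form: for any vertex $v$, the neighbourhood $N(v)$ has size $> \alpha(G)$ and hence is not independent, so it spans an edge; and more generally, any set of more than $\alpha(G)$ vertices spans an edge. Write $\alpha = \alpha(G)$ and $\delta = \delta(G)$, so $\delta \ge \alpha + 1$.

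First I would dispose of the easy cases. Triangles ($\ell = 3$) exist because $N(v)$ spans an edge, as already observed in the proof of \Cref{lem:path-chords}. For $\ell = 5$: take a triangle $xyz$; each of $x,y,z$ has at least $\delta - 2 \ge \alpha - 1$ neighbours outside the triangle, and (choosing $\alpha$ large) these neighbourhoods cannot all be confined to a tiny set — more carefully, consider $N(x) - \{y,z\}$ and $N(y) - \{x,z\}$, both of size $> \alpha - 2$; if they are not disjoint we get a common neighbour $w$ and then $xwy z$ with... actually the clean approach is: pick a path $P = v_1 v_2 v_3$; the sets $N(v_1) - V(P)$ and $N(v_3) - V(P)$ each have size $> \alpha - 3$, so their union has size $> \alpha$ unless they intersect heavily. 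If they intersect, a common neighbour $w \notin V(P)$ gives a $4$-cycle $v_1 v_2 v_3 w$; if they are nearly disjoint, their union exceeds $\alpha$ and spans an edge $e = ab$ with $a \in N(v_1), b \in N(v_3)$, giving the $5$-cycle $v_1 v_2 v_3 b a$ (after checking $a \neq b$, which holds as the sets are then disjoint). This simultaneously handles $\ell = 4$ and $\ell = 5$.

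For the remaining lengths $\ell = 6$ and $\ell = 7$, the idea is to start from a path of length $4$ or $5$ — which exists since $G$ has cycles (indeed paths) of any length up to at least $\alpha$, or one can build it greedily using minimum degree — say $P = v_1 \dots v_k$ with $k \in \{5,6\}$, and then look at $N(v_1) - V(P)$ and $N(v_k) - V(P)$, each of size $> \alpha - k$. As above, either the two sets share a vertex $w \notin V(P)$ — yielding a cycle of length $k+1$ — or their union has size $> \alpha$ and spans an edge $ab$ with $a \in N(v_1)$, $b \in N(v_k)$, $a \neq b$, yielding the cycle $v_1 \dots v_k b a v_1$ of length $k+2$. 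Running this with $k=5$ gives lengths $6$ and $7$; running it with $k = 6$ gives $7$ and $8$. So to cover exactly $\{6,7\}$ one takes $k=5$ and, in the "common neighbour" sub-case, a separate short argument may be needed to also force a $6$-cycle (e.g.\ if every such configuration only yields a $6$-cycle that is fine, and if it only yields a $5$-cycle one re-runs with the freedom to choose which endpoint to extend). The point is that with $\alpha$ large, the slack $\alpha - k$ is comfortably positive, so the dichotomy "common neighbour vs.\ spanned edge in the union" always applies.

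The main obstacle I anticipate is \emph{degeneracy of the configurations}: the edge $ab$ spanned by $N(v_1) \cup N(v_k)$ might lie entirely inside $N(v_1)$ (or entirely inside $N(v_k)$), rather than crossing between them, in which case it does not directly close a cycle of the target length. To handle this one must argue that at least one crossing edge exists, or else reroute: if $N(v_1) - V(P)$ and $N(v_k) - V(P)$ are disjoint and each has size close to $\alpha/2$... but they needn't be that balanced. The robust fix is to not remove $V(P)$ but instead note that an edge inside $N(v_1)$ together with $v_1$ gives a triangle sharing the vertex $v_1$ with $P$, and then do a rotation/relabelling so that the relevant endpoint has its neighbourhood "fresh"; alternatively, since this is a finite ad-hoc check for $\ell \le 7$ only, one can simply enumerate: a set of size $> \alpha$ must span an edge, and by a short case analysis on where that edge sits relative to the path $P$ (both ends near $v_1$, both near $v_k$, or crossing), each case either directly produces a cycle of the desired length or produces a shorter path/cycle from which one restarts with one fewer vertex committed, and the process terminates because lengths are bounded. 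I expect the write-up to be a sequence of five small paragraphs, one per value of $\ell$, each just a couple of lines long once the "large set spans an edge" principle is in place.
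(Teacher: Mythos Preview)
Your plan has a structural gap that is more serious than the ``internal edge'' issue you flag. The dichotomy ``common neighbour vs.\ crossing edge'' applied to a path $v_1\ldots v_k$ produces a cycle of length $k{+}1$ \emph{or} a cycle of length $k{+}2$, but the lemma requires \emph{both}. Your sentence ``this simultaneously handles $\ell=4$ and $\ell=5$'' is simply not true: if $N(v_1)$ and $N(v_3)$ happen to intersect outside $P$ you have a $4$-cycle but no argument for a $5$-cycle; if they are disjoint and you find a crossing edge you have a $5$-cycle but no argument for a $4$-cycle. The same problem recurs for $\{6,7\}$, and your parenthetical about ``re-running with the freedom to choose which endpoint to extend'' does not fix it, because the outcome of the dichotomy is dictated by the graph, not by you. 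You need a separate, complete argument for each target length.

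On the internal-edge obstacle you do identify: your proposed fix (rotate, restart, enumerate) is too vague to be convincing, and in fact the paper avoids the issue by a clean device you are missing. Rather than working with full neighbourhoods $N(u)$, the paper takes a \emph{largest independent subset} $I(u)\subseteq N(u)$. Then any edge found inside a union $I(u_1)\cup\ldots\cup I(u_t)$ is automatically crossing, since each $I(u_i)$ is independent. The price is that $|I(u)|$ may be much smaller than $\alpha$; the paper controls this by first assuming (for the target $\ell$) that no $N(u)$ contains a path of length $\ell-2$ (else done immediately), which forces $|I(u)|\ge \alpha/(\ell-2)$, and then splitting further on whether some $N(u)$ contains a moderately large matching. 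Each value of $\ell$ gets its own carefully tailored case analysis starting from a triangle and the sets $I(\cdot)$ of its vertices (or of the vertices of a matching in some $N(u)$), and the arguments for $\ell=6,7$ are genuinely more intricate than you suggest. Your outline, as it stands, would not close.
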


		For longer cycles we will consider two cases, depending on whether or not $n/\alpha$ is larger than $\delta \alpha$. If yes, then the proof follows directly from the results we mentioned earlier regarding cycle-complete Ramsey numbers (\Cref{thm:ramsey-erdos,thm:ramsey-keevash}). If not, then we use the following result about the Tur\'an number of even cycles.

		\begin{theorem}[Bondy--Simonovits \cite{bondy1974cycles}] \label{thm:even-cycle}
			Let $\ell \ge 1$ be an integer, 
			and suppose that $G$ is an $n$-vertex graph with $e(G) \ge \max\{20\ell n^{1+1/\ell}, 200n \ell\}$. Then $G$ contains a cycle of length $2\ell$.
		\end{theorem}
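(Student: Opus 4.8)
The plan is to prove this as a self-contained extremal statement, by a breadth-first search (BFS) layering argument in the spirit of Bondy and Simonovits; none of the lemmas appearing earlier in the paper are needed.

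\textbf{Regularisation.} Set $d = e(G)/n$, so that by hypothesis $d \ge \max\{20\ell n^{1/\ell},\,200\ell\}$. Deleting vertices of degree less than $d$ one at a time removes fewer than $dn = e(G)$ edges in total, so the process cannot delete all the edges and must terminate at a non-empty subgraph $H$ with $\delta(H) \ge d$. Passing to a connected component, I may assume $H$ is connected, still with $\delta(H) \ge d$ and $|H| \le n$. From now on I work in $H$, aiming to show $H$ (hence $G$) contains $C_{2\ell}$.

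\textbf{BFS set-up.} Fix a vertex $v \in V(H)$ and, for $i \ge 0$, let $D_i$ be the set of vertices at distance exactly $i$ from $v$ in $H$; fix a BFS tree $T$ rooted at $v$, so each $u \in D_i$ has a canonical $v$--$u$ path of length exactly $i$ in $T$. For $u, u' \in D_i$ these two canonical paths share an initial segment up to a \emph{meeting vertex} at some depth $j \le i$ and are internally disjoint below it; concatenating their two tails with any path joining $u$ to $u'$ that avoids those tails yields a cycle, of length $2(i-j) + p$ when the joining path has length $p$. Thus it suffices to locate $i, j$ with $i - j = \ell - 1$ and two vertices of $D_i$ meeting at depth $j$ with a common neighbour outside the two tree-tails (so $p = 2$); it will help to also allow slightly longer joining paths of even length inside $D_{i-1} \cup D_i \cup D_{i+1}$.

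\textbf{Expansion dichotomy.} The heart of the proof is the claim that if $H$ is $C_{2\ell}$-free then $|D_{i+1}| \ge c\,|D_i|$ for every $1 \le i \le \ell-1$, where $c = \Theta(d/\ell) \ge n^{1/\ell}$ (the factor $\Theta(1/\ell)$ and the additive slack are why the hypothesis carries the extra $200\ell$ term). The mechanism: if $|D_{i+1}|$ were smaller than $c\,|D_i|$ then, since each vertex of $D_i$ has at least $d$ neighbours and only boundedly many of them are forced into $D_{i-1}$, there are far more than $\ell\,|D_i|$ edges incident to $D_i$ inside $D_{i-1} \cup D_i \cup D_{i+1}$, hence many pairs of vertices of $D_i$ sharing a common neighbour there; a double pigeonhole, over the depth $j$ at which the canonical paths of such a pair diverge and over which of the $O(\ell)$ admissible lengths $2(i-j)+p$ is realised, produces a \emph{simple} cycle of length exactly $2\ell$, the simplicity forcing the joining path to be chosen disjoint from both tree-tails, which is possible because there are many candidate pairs.

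\textbf{Conclusion and the main obstacle.} Granting the dichotomy, suppose $H$ is $C_{2\ell}$-free. Then $D_1, \dots, D_\ell$ are all non-empty (otherwise the dichotomy at the layer just before the first empty one already fails), and $|D_1| \ge \delta(H) \ge d \ge n^{1/\ell}$, so iterating gives $|D_\ell| \ge c^{\ell-1}|D_1| \ge (n^{1/\ell})^{\ell-1}\cdot n^{1/\ell} = n$; since $D_0 = \{v\}$ is disjoint from $D_\ell$ this forces $|H| > n$, a contradiction, so $H \supseteq C_{2\ell}$. The genuinely hard part is the dichotomy, and specifically its length-exactness: converting ``layer $D_i$ failed to expand, so many short paths connect its vertices'' into ``some such path closes up through the BFS tree into a simple cycle of length \emph{precisely} $2\ell$''. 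Balancing the available freedom (a range of layers $i$, of meeting depths $j$, of joining-path lengths) against the vertex-disjointness needed for simplicity is where essentially all the work lies, and is the reason for the generous constants in the statement.
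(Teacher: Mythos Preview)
The paper does not give its own proof of this statement: it is quoted verbatim as a result of Bondy and Simonovits and used only as a black box in the lower-range argument. So there is nothing in the paper to compare your proposal against.

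As a standalone argument, your sketch follows the correct overall architecture (pass to a high-minimum-degree subgraph, run BFS, show geometric growth of layers unless a $C_{2\ell}$ appears), but the step you yourself flag as ``the genuinely hard part'' is not actually carried out, and your indicated mechanism for it is not quite right. Saying that many pairs in $D_i$ share a common neighbour and then pigeonholing on the meeting depth $j$ gives many pairs with the \emph{same} $j$, but nothing forces $i-j=\ell-1$; your fix of varying the joining-path length $p$ so that $2(i-j)+p=2\ell$ just relocates the problem to finding paths of a \emph{prescribed} even length inside the layers, which is exactly the difficulty. The standard route is different: one proves a lemma that any (bipartite) graph of average degree larger than roughly $2t$ contains paths of \emph{every} length up to $2t$ between prescribed colour classes, and applies this to the bipartite graph between $D_i$ and $D_{i+1}$ (or to $D_i$ itself) when that graph is too dense; the tree paths then close a path of the correct length into a $C_{2\ell}$. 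Without something playing the role of that lemma, the expansion dichotomy remains an assertion rather than a proof.
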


		The following lemma allows us to find odd cycles in the latter case, by applying \Cref{thm:even-cycle} to a subgraph $H$ of $G$ which has the property that every edge $e$ is in a short odd cycle in $G$ whose vertices which are not in $e$ are not in $H$.

		\begin{lemma} \label{lem:odd-cycles}
			Let $0 < \delta \ll 1$ and let $\alpha, n \ge 1$ be integers.
			Suppose that $G$ is an $n$-vertex graph with $\delta(G) > \alpha(G) = \alpha$.
			Then there is a subgraph $H$ of $G$ with at least $\delta n \alpha$ edges, such that each edge $e$ in $H$ is contained in a cycle $C$ of length $3$ or $5$ satisfying $V(C) \cap V(H) = V(e)$.
		\end{lemma}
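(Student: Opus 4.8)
One natural approach is to produce $H$ from a single vertex partition $V(G) = A \sqcup B$: set $V(H) = A$, and let $E(H)$ consist of those edges $xy$ with $x,y \in A$ that lie in a triangle of $G$ with third vertex in $B$, or in a $C_5$ of $G$ whose other three vertices lie in $B$. Any such edge automatically satisfies the conclusion, since its witnessing short odd cycle $C$ meets $V(H)=A$ exactly in $\{x,y\}$. So the whole task reduces to choosing the partition so that $e(H) \ge \delta n \alpha$, and I would do this by averaging over a random partition; this in turn reduces the problem to a structural statement about $G$ alone.

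The structural statement I would aim to prove is: \emph{every edge of $G$ lies in a triangle or a $C_5$}. As $e(G) \ge n\delta(G)/2 > n\alpha/2$, this yields more than $n\alpha/2$ edges, each in a triangle or a $C_5$. To prove it, fix an edge $xy$; if $x,y$ have a common neighbour we get a triangle, so assume not and put $A_1 = N_G(x) \setminus \{y\}$ and $A_2 = N_G(y) \setminus \{x\}$, which are then disjoint, each of size at least $\alpha$. The clean reformulation is that $xy$ lies in a $C_5$ if and only if some $a \in A_1$ and $v \in A_2$ have a common neighbour $w$: indeed $x\,a\,w\,v\,y$ then has five distinct vertices (note $w \notin \{x,y\}$, as otherwise $w$ would be a common neighbour of $x$ and $y$, and $a \neq v$ since $A_1 \cap A_2 = \emptyset$) and closes into a $C_5$ with the edge $xy$. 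Suppose, for contradiction, that $N_G(a) \cap N_G(v) = \emptyset$ for all $a \in A_1$, $v \in A_2$. Since $y$ has no neighbour in $A_1$ and $x$ has none in $A_2$, if $A_1$ were independent then $A_1 \cup \{y\}$ would be an independent set of size at least $\alpha + 1$, a contradiction; so $A_1$, and symmetrically $A_2$, spans an edge. From the disjointness of all these neighbourhoods one then analyses the structure of $A_1 \cup A_2$ together with their neighbourhoods and extracts an independent set larger than $\alpha$, the final contradiction. I expect this last extraction — pinning down the structure of an edge lying in neither a triangle nor a $C_5$ — to be the main obstacle; note that the weaker statement "all but $O(n)$ edges lie in a triangle or $C_5$" would already suffice and may be cleaner to establish.

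Granting the structural statement, I would conclude as follows. Put each vertex of $G$ into $B$ independently with probability $\tfrac12$, and let $A = V(G) \setminus B$. For each edge $xy$ lying in a triangle or a $C_5$, fix one such cycle $C_{xy}$ (with one or three vertices besides $x,y$); the probability that $x,y \in A$ while every other vertex of $C_{xy}$ lies in $B$ equals $2^{-|C_{xy}|} \ge 2^{-5}$, and on this event $xy \in E(H)$. Hence $\Ex[e(H)] \ge 2^{-5}\cdot \tfrac{n\alpha}{2} = \tfrac{n\alpha}{64}$, so some partition yields a subgraph $H$ with $e(H) \ge n\alpha/64 \ge \delta n \alpha$, using $\delta \ll 1$. (The same computation gives $e(H) = \Omega(n\alpha) \ge \delta n\alpha$ starting from the weaker "all but $O(n)$" version of the structural statement.) This proves the lemma.
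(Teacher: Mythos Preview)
Your random-partition endgame is essentially the same as the paper's, so the only content is the structural step, and here there is a genuine gap. The claim that \emph{every} edge of $G$ lies in a $C_3$ or $C_5$ is false: take two disjoint copies of $K_{k+2}$ (with $k \ge 2$), pick $x$ in one and $y$ in the other, and add the single edge $xy$. Then $\delta(G) = k+1 > 2 = \alpha(G)$, but $N(x) \setminus \{y\}$ and $N(y)\setminus\{x\}$ lie in different cliques, so no $a \in N(x)\setminus\{y\}$ and $v \in N(y)\setminus\{x\}$ have a common neighbour, and $xy$ is in no $C_3$ or $C_5$. Thus the ``final extraction'' you flag as the main obstacle cannot succeed as stated. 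Your fallback --- that all but $O(n)$ edges are good --- may well be true, but you have not indicated how to prove it, and bridge-type constructions (e.g.\ a path of cliques) show that the set of bad edges can be nontrivial, so this is not a formality.

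The paper avoids this issue entirely by counting at the level of vertices rather than edges. It partitions $V(G)$ into four classes $U_1,\dots,U_4$ according to local structure (large matching in $N(u)$; in a triangle with a ``small-matching'' vertex; adjacent to such a triangle; neither) and shows $|U_4| \le n/3$ by observing that the triangles through $U_4$-vertices are pairwise vertex-disjoint. For each $u \notin U_4$ it then exhibits at least $\alpha/12$ ordered tuples $(u, x, \ldots)$ encoding a $C_3$ or $C_5$ through $ux$, using either the large matching in $N(u)$, or the large overlap/matching between two independent sets $I(u), I(v)$ of size $\ge 2\alpha/3$. This yields $\Omega(n\alpha)$ ordered tuples, at most one per ordered pair, and the same $2^{-5}$ random-set argument finishes. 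The point is that the paper never needs an arbitrary edge to be good; it manufactures enough good edges by choosing both endpoints carefully.
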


	\subsection{Proof of lower range lemma} \label{sec:lower-short-cycles}

		\begin{proof}[Proof of \Cref{lem:lower-range}]
			Let $\eta$ be a constant satisfying $\delta \ll \eta \ll \alpha$.
			Write $m = \max\{n/\alpha, \delta \alpha\}$, and let $\ell$ be an integer satisfying $3 \le \ell \le m$.
			If $\ell \le 7$ then there is a cycle of length $\ell$, by \Cref{lem:short-cycles}, so we may assume $\ell \ge 8$.
			We consider two cases, according to the value of $m$.

			Suppose first that $m = n/\alpha$.
			Since $G$ has no independent set of size $\alpha+1$ and $\alpha$ is large, there is a cycle of length exactly $\ell$, using aforementioned results about cycle-complete Ramsey numbers, namely \Cref{thm:ramsey-erdos} if $\ell \le \log \alpha$ and \Cref{thm:ramsey-keevash} otherwise.

			Suppose now that $m = \delta \alpha$.
			Apply \Cref{lem:odd-cycles} to find a subgraph $H$ of $G$ with at least $\eta n \alpha$ edges such that for every edge $e \in E(H)$ there is a cycle $C_e$ of length $3$ or $5$ that contains $e$ and whose vertices which are not in $e$ are not in $H$.
			Let $H_i$ be the subgraph of $H$ with edges $e$ such that $|C_e| = i$, for $i \in \{3,5\}$.
			Let $i \in \{3,5\}$ be such that $e(H_i) \ge \eta n \alpha / 2$.
			Write 
			\begin{equation*}
				k = \left\{
					\begin{array}{ll}
						\frac{\ell - i + 2}{2} & \text{$\ell$ is odd} \\
						\frac{\ell}{2} & \text{$\ell$ is even}.
					\end{array}
					\right.
			\end{equation*}
			\begin{claim}
				$e(H_i) \ge \max\{20k n^{1+1/k}, 200nk\}$.
			\end{claim}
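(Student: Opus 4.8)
The plan is to read the claim off the already-recorded lower bound $e(H_i) \ge \eta n\alpha/2$, combined with two elementary facts available in this case: $k \le \ell/2 \le m/2 = \delta\alpha/2$, and $n \le \delta\alpha^2$. The second of these is nothing but a reformulation of the standing assumption $m = \delta\alpha$ (equivalently $n/\alpha \le \delta\alpha$), and it is precisely what will let us control the factor $n^{1/k}$; in the complementary case $m = n/\alpha$ this bound on $n$ is unavailable, but there the claim is not needed, as one finishes via cycle-complete Ramsey numbers rather than via \Cref{thm:even-cycle}.

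For the bound $e(H_i) \ge 200nk$ I would simply write $200nk \le 200n\cdot\delta\alpha/2 = 100\delta n\alpha \le \eta n\alpha/2 \le e(H_i)$, the middle inequality being valid since $\delta \ll \eta$. For the bound $e(H_i) \ge 20kn^{1+1/k}$, since $e(H_i) \ge \eta n\alpha/2$ and $20kn^{1+1/k} = 20kn^{1/k}\cdot n$, it suffices to show $40kn^{1/k} \le \eta\alpha$. I would first note that $k \ge 3$: the lengths $\ell \le 7$ were disposed of by \Cref{lem:short-cycles}, so $\ell \ge 8$, and a glance at the definition of $k$ shows $k \ge 3$ in every case (the extreme one being $\ell = 9$, $i = 5$, where $k = 3$). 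Then split on the size of $k$. If $k \ge \log_2 n$, then $n^{1/k} \le 2$, so $40kn^{1/k} \le 80k \le 40\delta\alpha \le \eta\alpha$ by $\delta \ll \eta$. If $k < \log_2 n$, then, using $n \le \delta\alpha^2 \le \alpha^2$, we get $k < \log_2 n \le 2\log_2\alpha$; and using $k \ge 3$ we get $n^{1/k} \le n^{1/3} \le (\delta\alpha^2)^{1/3} \le \alpha^{2/3}$, so $40kn^{1/k} \le 80\alpha^{2/3}\log_2\alpha \le \eta\alpha$, the last step holding because $\alpha$ is sufficiently large (as $\alpha^{1/3}/\log_2\alpha \to \infty$).

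I do not expect any genuine obstacle here; it is essentially bookkeeping. The one point worth emphasising when writing it out is that every estimate on $n^{1/k}$ leans on $n \le \delta\alpha^2$, so one should recall explicitly at the start that the case hypothesis $m = \delta\alpha$ is exactly this inequality, and that $k$ ranges over $3 \le k \le \delta\alpha/2$.
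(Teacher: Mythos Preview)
Your proposal is correct and follows essentially the same approach as the paper: both use $k \ge 3$ (from $\ell \ge 8$), the case hypothesis $n \le \delta\alpha^2$, and a split at $k \approx \log n$ to control $n^{1/k}$. The only cosmetic difference is that the paper first converts $e(H_i) \ge \eta n\alpha/2$ into $e(H_i) \ge n^{3/2}$ (via $\alpha \ge \delta^{-1/2}\sqrt{n}$) and then bounds $20kn^{1+1/k}$ by $n^{3/2}$, whereas you keep $e(H_i)$ as $\eta n\alpha/2$ and bound $40kn^{1/k}$ by $\eta\alpha$; these are two phrasings of the same computation.
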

			\begin{proof}
				First note that $200nk \le 200 n \ell \le 200 \delta n \alpha \le \eta n \alpha / 2 \le e(H_i)$. 

				Next, note that, as $\ell \ge 8$, we have $k \ge 3$.
				Moreover, since $m = \delta \alpha$, we have $\delta \alpha \ge n / \alpha$, so $\alpha \ge \delta^{-1/2}\sqrt{n}$. This implies $e(H_i) \ge \eta \delta^{-1/2} n^{3/2}/2 \ge n^{3/2}$. 
				Now, if $k \ge \log n$ then $20kn^{1 + 1/k} \le 20 \cdot e \cdot kn \le e(H_i)$. If, instead, $3 \le k \le \log n$, then $20kn^{1 + 1/k} \le 20 \log n \cdot n^{4/3} \le n^{3/2} \le e(H_i)$, using that $\alpha$ and thus $n$ are large.
			\end{proof}

			By the claim and \Cref{thm:even-cycle}, the graph $H_i$ has a cycle of length exactly $2k$. If $\ell$ is even, then this is a cycle of length $\ell$ in $G$. If $\ell$ is odd, then let $C$ be a cycle of length $2k = \ell - (i-2)$ in $H_i$. Let $C'$ be the cycle obtained by replacing one edge $e$ in $C$ by the subpath of $C_e$ obtained by removing $e$. Notice that this is indeed a cycle, as the vertices of $C_e$ which are not in $e$ are not in $H_i$, and thus not in $C$. Also, $|C'| = |C| - 1 + (i-1) = \ell$.
		\end{proof}

	\subsection{Finding many edges in short odd cycles}
		We now prove \Cref{lem:odd-cycles}.
		The idea is to find $\Omega(n\alpha)$ edges in $G$ that are contained in either a triangle or a $5$-cycle. 
		Once that is done, a simple probabilistic argument can be used to find a subgraph $H$ with the desired properties.

		\begin{proof}[Proof of \Cref{lem:odd-cycles}]
			Let $\eta$ be a constant satisfying $\delta \ll \eta \ll 1$.
			For each vertex $u$ let $M(u)$ be a maximum matching in $N(u)$, and let $I(u) := N(u) - M(u)$ (so $I(u)$ is independent).
			We define sets $U_1, \ldots, U_4$ as follows. 
			\begin{itemize}
				\item
					Let $U_1$ be the set of vertices $u$ satisfying $|M(u)| \ge \alpha/12$.
				\item
					Let $U_2$ be the set of vertices $u \notin U_1$ for which there exists a triangle $(uvw)$, where $v \notin U_1$. 
				\item
					Let $U_3$ be the set of vertices $u \notin U_1 \cup U_2$ for which there exist three vertices $v,w,x \neq u$ such that $(vwx)$ is a triangle, $v \notin U_1$, and $uw$ is an edge.
				\item
					Let $U_4 = V(G) - (U_1 \cup U_2 \cup U_3)$.
			\end{itemize}
			We show that $|U_4| \le n/3$.
			Indeed, for each $u \in U_4$ let $T_u$ be a triangle that contains $u$ (such a triangle exists by $\delta(G) > \alpha$). 
			We claim that the triangles $T_u$, with $u \in U_4$, are pairwise vertex-disjoint.
			To see this, consider distinct $u_1,u_2 \in U_4$, and write $T_u = (u_1v_1w_1)$ and $T_{u_2} = (u_2v_2w_2)$. Notice that $v_1,w_1,v_2,w_2 \in U_1$, because $u_1,u_2 \notin U_2$. Thus, if $T_{u_1}$ and $T_{u_2}$ share a vertex, we may assume $v_1 = v_2$. This is a contradiction to $u \notin U_4$, as $(u_2v_2w_2)$ is a triangle, $u_2 \notin U_1$, and $u_1v_2$ is an edge.
			This shows that the triangles $T_u$, with $u \in U_4$, are pairwise vertex-disjoint, which implies the desired inequality $|U_4| \le n/3$.

			Let $\cF$ be a family of ordered triples and quintuples, defined as follows. 
			It will be useful to note that for distinct $u,v \notin U_1$ either $|I(u) \cap I(v)| \ge \alpha/12$ or there is a matching of size at least $\alpha/12$ whose edges have one end in $I(u)$ and the other in $I(v)$; this follows from $I(u)$ and $I(v)$ being independent sets of size at least $2\alpha/3$.
			\begin{itemize}
				\item
					For every $u \in U_1$ and $xy \in M(u)$, add the ordered triples $(u,x,y)$ and $(u, y, x)$ to $\cF$.
				\item
					For every $u \in U_2$, fix $v, w$ such that $v \notin U_1$ and $(uvw)$ is a triangle.
					Because $I(u)$ and $I(v)$ are two independent sets of size at least $2\alpha/3$.
					If $|I(u) \cap I(v)| \ge \alpha/12$, add the ordered triple $(u,x,v)$ to $\cF$ for each $x \in I(u) \cap I(v)$. 
					Otherwise there is a matching $M$ of size at least $\alpha/12$ between $I(u)$ and $I(v)$. Add to $\cF$ the ordered quintuple $(u, x, y, v, w)$, for each edge $xy \in M$ with $x \in I(u)$ and $y \in I(v)$.
				\item
					For every $u \in U_3$, fix $v, w, x \neq u$ such that $(v w x)$ is a triangle, $v \notin U_1$, and $uw$ is an edge. If $|I(u) \cap I(v)| \ge \alpha/12$, add $(u, y, v, x, w)$ with $y \in I(u) \cap I(v)$ to $\cF$. Otherwise, let $M$ be an $I(u)-I(v)$ matching of size at least $\alpha/12$, and add $(u, y, z, v, w)$ to $\cF$, for all $yz \in M$ (where $y \in I(u)$ and $z \in I(v)$). 
			\end{itemize}
			Notice that for every $u \notin U_4$ there are at least $\alpha/12$ ordered tuples in $\cF$ whose first element is $u$, showing that $|\cF| \ge 2n/3 \cdot \alpha/12 = \alpha n/18$. Moreover, for every two vertices $x,y$, there is at most one ordered tuple in $\cF$ whose first element is $x$ and the second $y$.

			Let $X$ be a random set of vertices, obtained by including each vertex with probability $1/2$, independently. 
			Let $\cP$ be the set of ordered pairs $(x,y)$ such that $x,y \in X$ and $\cF$ contains a tuple whose first element is $x$ and the second is $y$, and whose other elements are not in $X$.
			Then $\Ex(|\cP|) \ge |\cF| \cdot 2^{-5} \ge \alpha n / 576$. Fix an outcome of $X$ such that $|\cP| \ge \alpha n / 576$.
			Let $H$ be the graph on vertex set $X$, with edges $xy$ such that at least one of $(x,y)$ and $(y,x)$ is in $\cP$. Then $e(H) \ge |\cP|/2 \ge \alpha n / 1152$. The graph $H$ satisfies the requirements of the lemma.
		\end{proof}

\section{Conclusion} \label{sec:conc}

	In this paper we proved that if a graph $G$ has sufficiently many vertices, and satisfies $\kappa(G) > \alpha(G)$, then $G$ is pancyclic, thereby proving a long standing conjecture of Jackson and Ordaz \cite{jackson1990chvatal} for large graphs.
	Of course, it would be nice to settle the conjecture for all graphs. As mentioned in the introduction, it is plausible that a slight generalisation of Jackson and Ordaz's conjecture holds, namely that every graph $G$ that contains a triangle and satisfies $\kappa(G) \ge \alpha(G)$ is pancyclic. Tools from this paper are likely to be helpful in proving this generalisation (if true), but not everything carries through directly, and we have decided not to pursue this direction as the paper is long enough as it is. 

	As a different avenue for potential future research, we mention a question from \cite{draganic2022pancyclicity}. In \cite{draganic2022pancyclicity}, Dragani\'c, Munh\'a-Correia, and Sudakov proved that if $G$ is Hamiltonian and $n \ge (2+\Omega(1))\alpha^2$, where $n$ is the number of vertices in $G$ and $\alpha$ is $G$'s independence number, then $G$ is pancyclic. While this bound is tight, up to the $\Omega(1)$ error term, the authors suspect that a much weaker bound suffices for guaranteeing a cycle of length $n-1$.

	\begin{question}
		Is there a constant $c$ such that if $G$ is a Hamiltonian graph on $n$ vertices, with independence number $\alpha$ where $n \ge c \cdot \alpha$, then $G$ has a cycle of length $n-1$?
	\end{question}

\bibliography{pancyclic}

\providecommand{\bysame}{\leavevmode\hbox to3em{\hrulefill}\thinspace}
\providecommand{\MR}{\relax\ifhmode\unskip\space\fi MR }
\providecommand{\MRhref}[2]{%
  \href{http://www.ams.org/mathscinet-getitem?mr=#1}{#2}
}
\providecommand{\href}[2]{#2}
\begin{thebibliography}{10}

\bibitem{amar1991pancyclism}
D.~Amar, I.~Fournier, and A.~Germa, \emph{{Pancyclism in Chv{\'a}tal-Erd\H{o}s's graphs}}, Graphs Combin. \textbf{7} (1991), 101--112.

\bibitem{bauer1990hamiltonian}
D.~Bauer and E.~Schmeichel, \emph{Hamiltonian degree conditions which imply a graph is pancyclic}, J. Combin. Theory Ser. B \textbf{48} (1990), 111--116.

\bibitem{bauer1995toughness}
D.~Bauer, J.~van~den Heuvel, and E.~Schmeichel, \emph{Toughness and triangle-free graphs}, J. Combin. Theory Ser. B \textbf{65} (1995), 208--211.

\bibitem{bondy1971pancyclic}
J.~A. Bondy, \emph{{Pancyclic graphs I}}, J. Combin. Theory Ser. B \textbf{11} (1971), 80--84.

\bibitem{bondy1973pancyclic}
\bysame, \emph{Pancyclic graphs: recent results, infinite and finite sets}, Colloq. Math. Soc. J{\'a}nos Bolyai, vol.~10, 1973, pp.~181--187.

\bibitem{bondy1980longest}
\bysame, \emph{Longest paths and cycles in graphs of high degree}, Department of Combinatorics and Optimization, University of Waterloo, 1980.

\bibitem{bondy1974cycles}
J.~A. Bondy and M.~Simonovits, \emph{Cycles of even length in graphs}, J. Combin. Theory Ser. B \textbf{16} (1974), 97--105.

\bibitem{chvatal1972hamilton}
V.~Chv{\'a}tal, \emph{{On Hamilton's ideals}}, J. Combin. Theory Ser. B \textbf{12} (1972), 163--168.

\bibitem{chvatal1972note}
V.~Chv{\'a}tal and P.~Erd\H{o}s, \emph{{A note on Hamiltonian circuits}}, Discr. Math. \textbf{2} (1972), 111--113.

\bibitem{clark1981hamiltonian}
L.~Clark, \emph{Hamiltonian properties of connected locally connected graphs}, Congr. Numer. \textbf{32} (1981), 199--204.

\bibitem{dirac1952some}
G.~A. Dirac, \emph{Some theorems on abstract graph}, Proc. London Math. Soc. \textbf{3} (1952), 69--81.

\bibitem{draganic2022pancyclicity}
N.~Dragani{\'c}, D.~Munh{\'a}~Correia, and B.~Sudakov, \emph{{Pancyclicity of Hamiltonian graphs}}, J. Eur. Math. Soc., to appear.

\bibitem{draganic2023generalization}
N.~Dragani{\'c}, D.~Munh\'a~Correia, and B.~Sudakov, \emph{{A generalization of Bondy's pancyclicity theorem}}, Combin. Probab. Comput. \textbf{33} (2024), 554--563.

\bibitem{draganic2023chvatal}
N.~Dragani{\'c}, D.~Munh{\'a}~Correia, and B.~Sudakov, \emph{{Chv{\'a}tal--Erd{\H{o}}s condition for pancyclicity}}, J. Assoc. Math. Res. \textbf{2} (2024), 1--14.

\bibitem{erdos1974some}
P.~Erd\H{o}s, \emph{Some problems in graph theory, hypergraph seminar}, Lecture notes in Math., vol. 411, Springer, Berlin, 1974, pp.~187--190.

\bibitem{erdos1978cycle}
P.~Erd\H{o}s, R.~J. Faudree, C.~C. Rousseau, and R.~H. Schelp, \emph{{On cycle-complete graph Ramsey numbers}}, J. Graph Theory \textbf{2} (1978), 53--64.

\bibitem{fan1984new}
G.-H. Fan, \emph{New sufficient conditions for cycles in graphs}, J. Combin. Theory Ser. B \textbf{37} (1984), 221--227.

\bibitem{gould2014recent}
R.~J. Gould, \emph{{Recent advances on the Hamiltonian problem: Survey III}}, Graphs Combin. \textbf{30} (2014), 1--46.

\bibitem{jackson1990chvatal}
B.~Jackson and O.~Ordaz, \emph{{Chv{\'a}tal-Erd\H{o}s conditions for paths and cycles in graphs and digraphs. A survey}}, Discr. math. \textbf{84} (1990), 241--254.

\bibitem{karp1972reducibility}
R.~M. Karp, \emph{Reducibility among combinatorial problems}, Complexity of Computer Computations, Plenum, New York, pp.~85--103.

\bibitem{keevash2021cycle}
P.~Keevash, E.~Long, and J.~Skokan, \emph{{Cycle-complete Ramsey numbers}}, Int. Math. Res. \textbf{2021} (2021), 275--300.

\bibitem{keevash2010pancyclicity}
P.~Keevash and B.~Sudakov, \emph{{Pancyclicity of Hamiltonian and highly connected graphs}}, J. Combin. Theory Ser. B \textbf{100} (2010), 456--467.

\bibitem{kuhn2014hamilton}
D.~K\"uhn and D.~Osthus, \emph{Hamilton cycles in graphs and hypergraphs: an extremal perspective},  \textbf{4} (2014), 381--406.

\bibitem{lou1996chvatal}
D.~Lou, \emph{{The Chv{\'a}tal-Erd\H{o}s condition for cycles in triangle-free graphs}}, Discr. Math. \textbf{152} (1996), 253--257.

\bibitem{mcdiarmid2017hamilton}
C.~McDiarmid and N.~Yolov, \emph{Hamilton cycles, minimum degree, and bipartite holes}, J. Graph Theory \textbf{86} (2017), 277--285.

\bibitem{oberly1979every}
D.~J. Oberly and D.~P. Sumner, \emph{{Every connected, locally connected nontrivial graph with no induced claw is Hamiltonian}}, J. Graph Theory \textbf{3} (1979), 351--356.

\bibitem{posa1976hamiltonian}
L.~P{\'o}sa, \emph{Hamiltonian circuits in random graphs}, Discr. Math. \textbf{14} (1976), 359--364.

\bibitem{shi1986connected}
R.~H. Shi, \emph{{Connected and locally connected graphs with no induced claws are vertex pancyclic}}, Kexue Tongbao \textbf{31} (1986), 427.

\bibitem{thomason1978hamiltonian}
A.~G. Thomason, \emph{Hamiltonian cycles and uniquely edge colourable graphs}, Ann. Discr. Math. \textbf{3} (1978), 259--268.

\bibitem{zhang1989cycles}
C.-Q. Zhang, \emph{{Cycles of given length in some $K_{1,3}$-free graphs}}, Discr. Math. \textbf{78} (1989), 307--313.

\end{thebibliography}
\bibliographystyle{amsplain}

\appendix

	\section{Finding short cycles}
		We now prove \Cref{lem:short-cycles}, about finding cycles of length between $3$ and $7$. 
		\begin{proof}[Proof of \Cref{lem:short-cycles}]
			Write $\alpha = \alpha(G)$. For each vertex $u$, let $I(u)$ be a largest independent set in the neighbourhood $N(u)$.

			\paragraph{\boldmath$\ell = 3$.}
				Since $|N(u)| > \alpha$ for every vertex $u$, there is an edge in $N(u)$, showing that $G$ contains a triangle. 

			\paragraph{\boldmath$\ell = 4$.}
				Notice that if, for some vertex $u$, its neighbourhood $N(u)$ contains a path $v_1v_2v_3$ of length $2$, then $(uv_1v_2v_3)$ is a cycle of length $4$. So suppose that $N(u)$ does not contain a path of length $2$, for every vertex $u$. In particular, each neighbourhood $N(u)$ induces a matching, and thus $|I(u)| \ge \alpha/2$.
				Consider a triangle $(uvw)$. If $|I(u) \cap I(v)| \ge 2$, then take a vertex $x \in \big(I(u) \cap I(v)\big) - \{w\}$, and note that $(uxvw)$ is a $4$-cycle (see \Cref{fig:4a}). So suppose that $|I(u) \cap I(v)|, |I(u) \cap I(w)|, |I(v) \cap I(w)| \le 1$.
				Thus $|I(u) \cup I(v) \cup I(w)| \ge 3\alpha/2 - 3 > \alpha$, showing that $I(u) \cup I(v) \cup I(w)$ contains an edge $xy$. By independence of $I(u), I(v), I(w)$, and by symmetry, we may assume $x \in I(u)$ and $y \in I(v)$. Thus $(uxyv)$ is a $4$-cycle (see \Cref{fig:4b}).
				\begin{figure}[h]
					\centering
					\begin{subfigure}[b]{.3\textwidth}
						\centering
						\includegraphics[scale = .7]{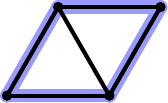}
						\caption{$(uxvw)$}
						\label{fig:4a}
					\end{subfigure}
					\begin{subfigure}[b]{.3\textwidth}
						\centering
						\includegraphics[scale = .7]{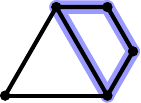}
						\caption{$(uxyv)$}
						\label{fig:4b}
					\end{subfigure}
					\caption{4-cycles}
				\end{figure}

			\paragraph{\boldmath$\ell = 5$.}
				To see that $G$ contains a $5$-cycle, we may assume that $N(u)$ does not contain a path of length $3$, for every vertex $u$. It follows that $|I(u)| \ge \alpha/3$.
				Suppose that some neighbourhood $N(u)$ contains a matching $\{v_1 w_1, \ldots, v_4 w_4\}$ of size $4$.
				If $|I(v_i) \cap I(v_j)| \ge 10$ for some distinct $i,j \in [4]$, pick $x \in \big(I(v_i) \cap I(v_j)\big) - \{v_1, w_1, \ldots, v_4, w_4, u\}$, then $(uv_ixv_jw_j)$ is a $5$-cycle (see \Cref{fig:5a}).
				Otherwise, the set $I(v_1) \cup \ldots \cup I(v_4)$ has size at least $4\alpha/3 - 6\cdot 9 > \alpha$, and so it has an edge $xy$, say $x \in I(v_i)$ and $y \in I(v_j)$. Notice that $i \neq j$, by independence of the sets $I(v_1), \ldots, I(v_4)$. Then $(uv_ixyv_j)$ is a $5$-cycle (see \Cref{fig:5b}).
				Now suppose that no neighbourhood $N(u)$ contains a matching of size $4$, showing that $|I(u)| \ge \alpha - 6$ for every vertex $u$.
				Let $(v_1v_2v_3)$ be a triangle.
				If there is an edge $xy$ with $x \in I(v_i)$, $y \in I(v_j)$, and $x,y \notin \{v_1,v_2,v_3\}$, then, without loss of generality $i = 1$ and $j = 2$, and so $(v_1 xyv_2 v_3)$ is a $5$-cycle (see \Cref{fig:5c}). So suppose no such edge exists. This shows that $|I(v_i) \cup I(v_j)| \le \alpha$ and thus $|I(v_i) \cap I(v_j)| \ge \alpha - 12$ for all $i,j \in [3]$. In particular, there are distinct vertices $x \in I(v_1) \cap I(v_2)$ and $y \in I(v_2) \cap I(v_3)$, such that $x,y \notin \{v_1, v_2, v_3\}$. Then $(v_1 x v_2 y v_3)$ is a $5$-cycle (see \Cref{fig:5d}).
				\begin{figure}[h]
					\centering
					\begin{subfigure}[b]{.29\textwidth}
						\centering
						\includegraphics[scale = .7]{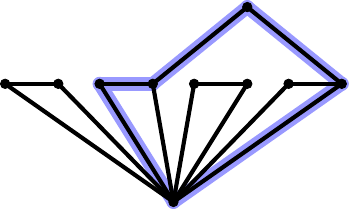}
						\caption{$(uv_ixv_jw_j)$}
						\label{fig:5a}
					\end{subfigure}
					\begin{subfigure}[b]{.29\textwidth}
						\centering
						\includegraphics[scale = .7]{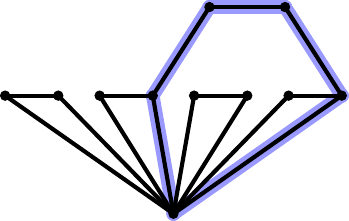}
						\caption{$(uv_ixyv_j)$}
						\label{fig:5b}
					\end{subfigure}
					\begin{subfigure}[b]{.18\textwidth}
						\centering
						\includegraphics[scale = .7]{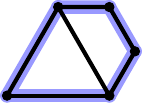}
						\caption{$(v_1xyv_2v_3)$}
						\label{fig:5c}
					\end{subfigure}
					\begin{subfigure}[b]{.18\textwidth}
						\centering
						\includegraphics[scale = .7]{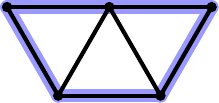}
						\caption{$(v_1xv_2yv_3)$}
						\label{fig:5d}
					\end{subfigure}
					\caption{$5$-cycles}
				\end{figure}

			\paragraph{\boldmath$\ell = 6$.}
				To find a $6$-cycle, we may assume as above that $|I(u)| \ge \alpha/4$ for every vertex $u$.
				Suppose that some neighbourhood $N(u)$ contains a matching $\{v_1w_1, \ldots, v_5w_5\}$ of size $5$. Then, similarly to the previous paragraph, there are distinct $i,j \in [5]$ and either a vertex $x \in \big(I(v_i) \cap I(v_j)\big) - \{v_1, w_1, \ldots, v_5, w_5, u\}$, or two vertices $x \in I(v_i) - \{v_1, w_1, \ldots, v_5, w_5, u\}$ and $y \in I(v_j) - \{v_1, w_1, \ldots, v_5, w_5\}$ such that $xy$ is an edge. This yields a $6$-cycle: $(uw_iv_ixv_jw_j)$ in the first case, and $(uv_ixyv_jw_j)$ in the second case (see \Cref{fig:6a,fig:6b}).
				So we now may assume $|I(u)| \ge \alpha - 8$ for every vertex $u$.
				Consider a triangle $(v_1v_2v_3)$. If $|I(v_i) \cap I(v_j)| \ge 6$ for every distinct $i,j \in [3]$, we may find a $6$-cycle of the form $(v_1 x v_2 y v_3 z)$ (see \Cref{fig:6c}). Otherwise, say $|I(v_1) \cap I(v_2)| \le 6$, and so $|I(v_1) \cup I(v_2)| \ge 2\alpha - 22$. In particular, there is a matching $\{x_1 y_1, x_2 y_2\}$ with $x_1, x_2 \in I(v_1)$ and $y_1, y_2 \in I(v_2)$, yielding the $6$-cycle $(v_1 x_1 y_1 v_2 y_2 x_2)$ (see \Cref{fig:6d}).
				\begin{figure}[h]
					\centering
					\begin{subfigure}[b]{.29\textwidth}
						\centering
						\includegraphics[scale = .6]{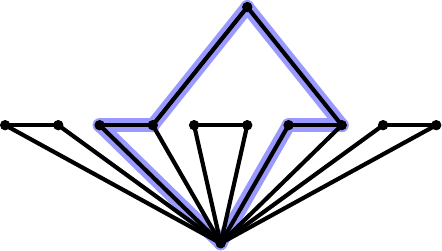}
						\caption{$(uw_iv_ixv_jw_j)$}
						\label{fig:6a}
					\end{subfigure}
					\begin{subfigure}[b]{.29\textwidth}
						\centering
						\includegraphics[scale = .6]{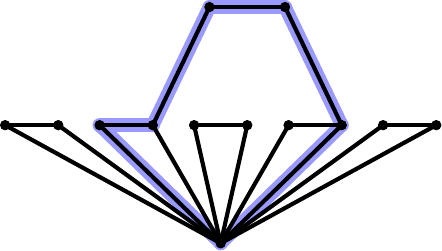}
						\caption{$(uv_ixyv_jw_j)$}
						\label{fig:6b}
					\end{subfigure}
					\begin{subfigure}[b]{.18\textwidth}
						\centering
						\includegraphics[scale = .7]{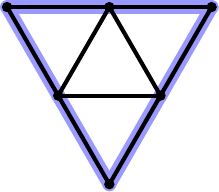}
						\caption{$(v_1xv_2yv_3z)$}
						\label{fig:6c}
					\end{subfigure}
					\begin{subfigure}[b]{.18\textwidth}
						\centering
						\includegraphics[scale = .7]{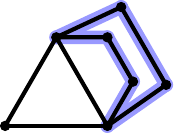}
						\caption{$(v_1x_1y_1v_2y_2x_2)$}
						\label{fig:6d}
					\end{subfigure}
					
					\caption{6-cycles}
				\end{figure}

			\paragraph{\boldmath$\ell = 7$.}
				Finally, we need to prove the existence of a $7$-cycle.
				As usual, we assume $|I(u)| \ge \alpha/5$ for every vertex $u$.
				Suppose $N(u)$ contains a matching $\{v_1w_1, \ldots, v_6w_6\}$. If there is an edge $xy$ with $x \in I(v_i)$ and $y \in I(v_j)$ (and $x,y \notin \{v_1, w_1, \ldots, v_6, w_6, u\}$), then we get a $7$-cycle $(uw_iv_ixyv_jw_j)$ (see \Cref{fig:7a}). So suppose no such edge exists, showing that $|I(v_1) \cup \ldots \cup I(v_6)| \le \alpha$. This implies that, for some distinct $i,j \in [6]$, we have $|I(v_i) \cap I(v_j)| \ge 10$. Let $x_1, \ldots, x_6 \in \big(I(v_i) \cap I(v_j)\big) - \{v_i, v_j, w_j, u\}$. As usual, there are distinct $s,t \in [6]$ such that either there exists $y \in \big(I(x_s) \cap I(x_t)\big) - \{v_i, v_j, w_j, u\}$, or there is an edge $yz$ with $y \in I(x_s) - \{v_i, v_j, w_j, u\}$ and  $z \in I(x_t) - \{v_i, v_j, w_j, u\}$. Either way, there is a $7$-cycle: $(uv_ix_iyx_jv_jw_j)$ in the first case, and $(uv_ix_iyzx_jv_j)$ in the second (see \Cref{fig:7b,fig:7c}). 
				We may now assume that $|I(u)| \ge \alpha - 10$ for every vertex $u$.
				This implies that for every two vertices $a,b$ and any set $S$ of size at most $7$ which does not contain $a$ or $b$, there is a path from $a$ to $b$ that avoids $S$ and has length $2$ or $3$. Consider a triangle $(v_1v_2v_3)$, and for $1 \le i < j \le 3$, let $P_{i,j}$ be a path of length $2$ or $3$ with ends $v_i$ and $v_j$, such that distinct paths have disjoint interiors. If at least two of these paths have length $3$, say $\ell(P_{1,2}), \ell(P_{2,3}) = 3$, then $(v_1 P_{1,2} v_2 P_{2,3} v_3)$ is a $7$-cycle (see \Cref{fig:7d}). If exactly one path has length $3$, say $\ell(P_{1,2}) = 3$, then $(v_1 P_{1,2} v_2 P_{2,3} v_3 P_{1,3})$ is a $7$-cycle (see \Cref{fig:7e}). We may thus assume that all paths have length $2$, i.e.\ there are distinct vertices $u_1, u_2, u_3$ such that $(v_1 u_1 v_2 u_2 v_3 u_3)$ is a $6$-cycle. Let $P$ be a path of length $2$ or $3$, with ends $v_1, u_1$, which avoids $\{v_2, u_2, v_3, u_3\}$. If $P$ has length $2$ then $(v_1 P u_1 v_2 u_2 v_3 u_3)$ is a $7$-cycle, and otherwise $(v_1 P u_1 v_2 u_2 v_3)$ is a $7$-cycle (see \Cref{fig:7f,fig:7g}).
				\begin{figure}[h]
					\centering
					\begin{subfigure}[b]{.4\textwidth}
						\centering
						\includegraphics[scale = .7]{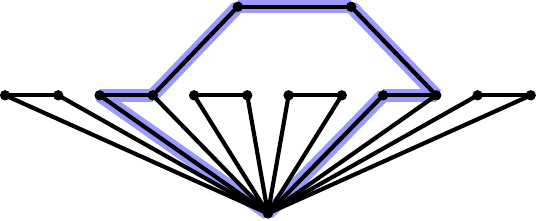}
						\caption{$(uw_iv_ixy_jv_jw_j)$}
						\label{fig:7a}
					\end{subfigure}
					\begin{subfigure}[b]{.23\textwidth}
						\centering
						\includegraphics[scale = .7]{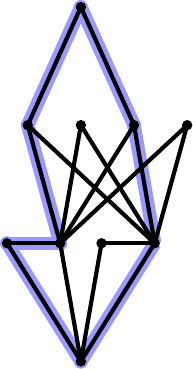}
						\caption{$(uv_ix_iyzx_jv_j)$}
						\label{fig:7b}
					\end{subfigure}
					\begin{subfigure}[b]{.23\textwidth}
						\centering
						\includegraphics[scale = .7]{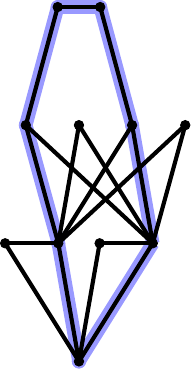}
						\caption{$(uv_ix_iyzx_jv_j)$}
						\label{fig:7c}
					\end{subfigure}
					\vspace{1cm}

					\begin{subfigure}[b]{.23\textwidth}
						\centering
						\includegraphics[scale = .7]{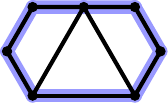}
						\caption{$(v_1P_{1,2}v_2P_{2,3}v_3)$}
						\label{fig:7d}
					\end{subfigure}
					\begin{subfigure}[b]{.23\textwidth}
						\centering
						\includegraphics[scale = .7]{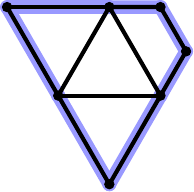}
						\caption{$(v_1P_{1,2}v_2P_{2,3}v_3P_{1,3})$}
						\label{fig:7e}
					\end{subfigure}
					\begin{subfigure}[b]{.23\textwidth}
						\centering
						\includegraphics[scale = .7]{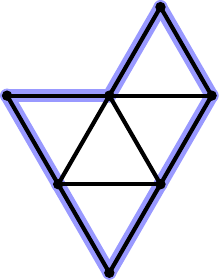}
						\caption{$(v_1Pu_1v_2u_2v_3u_3)$}
						\label{fig:7f}
					\end{subfigure}
					\begin{subfigure}[b]{.23\textwidth}
						\centering
						\includegraphics[scale = .7]{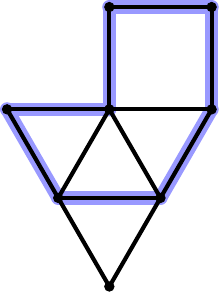}
						\caption{$(v_1Pu_1v_2u_2v_3)$}
						\label{fig:7g}
					\end{subfigure}
					\caption{7-cycles}
				\end{figure}
		\end{proof}

\end{document}